\newcommand{\dps}{\displaystyle}
\newtheorem{theorem}{\indent Theorem}[section]
\newtheorem{remark}{\indent Remark}[section]
\newcommand{\ba}{\begin{array}}\newcommand{\ea}{\end{array}}
\newcommand{\be}{\begin{eqnarray}}\newcommand{\ee}{\end{eqnarray}}
\newcommand{\beq}{\begin{equation*}}\newcommand{\eeq}{\end{equation*}}
\newcommand{\bex}{\begin{eqnarray*}}
\newcommand{\eex}{\end{eqnarray*}}
\def\bq{\begin{equation}}
\def\eq{\end{equation}}
\def\beq{\begin{equation*}}
\def\eeq{\end{equation*}}
\def\br{\begin{eqnarray}}
\def\er{\end{eqnarray}}
\def\brr{\bq\begin{array}{r@{}l}}
\def\err{\end{array}\eq}
\def\bry{\beq\begin{array}{r@{}l}}
\def\ery{\end{array}\eeq}
\def\brl{\bq\begin{array}{l}}
\def\erl{\end{array}\eq}
\def\bryl{\beq\begin{array}{l}}
\def\eryl{\end{array}\eeq}
\font\tenbi=cmmib10   at 11 pt
\font\sevenbi=cmmib10 at 9pt
\font\fivebi=cmmib7 at 6pt
\def\bi{\fam\bifam\tenbi}
\font\sixtdb=msbm10 at 16 pt \font\tendb=msbm10 at 12 pt  \font\sevendb=msbm7
\def\Dt {\triangle t}
\def\x{{\bi x}}
\def\Dt {\tau}
\title[Linear  adaptive  BDF2 scheme for the PFC equation]
{A linear adaptive second-order backward differentiation formulation scheme for the phase field crystal equation$^*$}
\author[Dianming Hou and Zhonghua Qiao]
{Dianming Hou$^{1}$
\quad
Zhonghua Qiao$^{2}$
}
\thanks{\hskip -12pt
${}^*$The work of D. Hou is supported by NSFC grant 12001248, the NSF of the Jiangsu Province grant
BK20201020, the NSF of Universities in Jiangsu Province of China grant 20KJB110013 and the Hong Kong Polytechnic University grant 1-W00D. Z. Qiao's work is partially supported by the Hong Kong Research Grant Council RFS grant RFS2021-5S03 and GRF grant 15302919, the Hong Kong Polytechnic University internal grant 4-ZZLS, and the CAS AMSS-PolyU Joint Laboratory of Applied Mathematics.
%The third author has received support from NSFC grant 11971408, NNW2018-ZT4A06 project.
\\
$^{1}$School of Mathematics and Statistics, Jiangsu Normal University, 221116 Xuzhou, China. Current address: Department of Applied Mathematics, The Hong Kong Polytechnic University, Hung Hom, Kowloon, Hong Kong.
  (dmhou@stu.xmu.edu.cn).\\
$^{2}$Corresponding author. Department of Applied Mathematics, The Hong Kong Polytechnic University, Hung Hom, Kowloon, Hong Kong. (zqiao@polyu.edu.hk).\\
%${}^{3}$School of Mathematical Sciences and
%Fujian Provincial Key Laboratory of Mathematical Modeling and High Performance
%Scientific Computing, Xiamen
%University, 361005 Xiamen, China.\\
%${}^{3}$Corresponding author.
% Email: zqiao@polyu.edu.hk(Z. Qiao) and dmhou@stu.xmu.edu.cn(D. Hou)
}
\keywords {Phase field crystal equation, linear adaptive BDF2 scheme, scalar auxiliary variable approach, unconditional energy stability, convergence analysis}
\subjclass{35K55, 65M12, 65M15, 65F30}
\begin{document}
\graphicspath{{figures/},}
\date {\today}
\maketitle

\begin{abstract}
In this paper, we present and analyze a linear fully discrete second order scheme with variable time steps for the phase field crystal  equation.
More precisely, we construct a linear adaptive time stepping scheme based on the second order backward differentiation formulation (BDF2) and use the Fourier spectral method for the spatial discretization.
The scalar auxiliary variable approach is employed to deal with the nonlinear term, in which we only adopt a first order method to approximate the auxiliary variable.
This treatment is extremely important in the derivation of the unconditional energy stability of the proposed adaptive BDF2 scheme.
However, we find for the first time that this strategy will not affect the second order accuracy of the unknown phase function $\phi^{n}$ by setting the positive constant $C_{0}$ large enough such that $C_{0}\geq 1/\Dt.$
The energy stability of the adaptive BDF2 scheme is established with a mild constraint on the adjacent time step radio $\gamma_{n+1}:=\Dt_{n+1}/\Dt_{n}\leq 4.8645$.
Furthermore, a rigorous error estimate of the second order accuracy of $\phi^{n}$ is derived for the proposed scheme on the nonuniform mesh by using the uniform $H^{2}$ bound of the numerical solutions.
Finally, some numerical experiments are carried out to validate the theoretical results and demonstrate the efficiency of the fully discrete adaptive BDF2 scheme.
\end{abstract}

\section{Introduction}
The phase field crystal (PFC) model was firstly proposed in \cite{EG04,EKHG02} and has been frequently used for modelling the crystal growth at the atomic sale in space but on diffusive scale in time.
It can simulate nucleated crystallites at arbitrary locations and orientations
with the elastic and plastic deformations.
The PFC model can account for incorporating the periodic nature of the crystal lattices through the following Swift-Hohenberg type free energy, defined by
\bq
E(\phi)=\int_{\Omega}\Big(\frac{1}{2}\phi(\Delta+\beta)^{2}\phi+\frac{1}{4}\phi^{4}-\frac{\varepsilon}{2}\phi^{2}\Big)d\x,
\eq
which is minimized by periodic density fields. Here $\Omega\subset\mathbb{R}^{d}, d=2$ or $3$, $\phi$ is the atom density field, $\beta$ and $\varepsilon$ are two positive constants such that $0<\varepsilon<\beta^{2}$. The operator $(\beta+\Delta)^{2}\phi$ is defined by
$
(\Delta+\beta)^{2}\phi=\Delta^{2}\phi+2\beta\Delta\phi+\beta^{2}\phi.
$
Thus the conserved phase field crystal equation reads
\brr\label{prob}
\begin{cases}
\dps\frac{\partial \phi}{\partial t}
=\Delta\mu,& \mbox{in}\quad \Omega\times(0,T),\\
\dps\mu=(\Delta+\beta)^{2}\phi+\phi^{3}-\varepsilon\phi,& \mbox{in}\quad \Omega\times(0,T),\\
\dps \phi(\x,0)=\phi_{0}(\x), &\mbox{in}\quad \Omega.
\end{cases}
\erl

Due to the gradient structure of \eqref{prob}, the above PFC system satisfies the following energy dissipation law
 \bq\label{EDlaw}
 \frac{d}{dt}E(\phi)
 =-\|\nabla\mu\|^{2}\leq0.
 \eq
This indicates the free energy $E(\phi)$ always decreases in time. In numerical simulations, it is crucial to design numerical schemes preserving the above energy dissipation law at the discrete level,  see e.g., \cite{BLWW13,Ell93,Eyr98,GLWW14,DJLQ_rev,Yan16,Zha17,Shen17_1,Shen17_2,JLQ22_1,JLQ22_2,LL21} and references therein. These schemes are often called energy stable schemes in practice. A direct implicit/explicit treatment for the nonlinear term in \eqref{prob} usually yields a severe stability restriction on the time step sizes to preserve the discrete form of the energy law \eqref{EDlaw}. In the last two decades, many efforts have been made to relax this restriction. For example, Wise, Wang and Lowengrub developed a first order convex splitting scheme for the PFC equation in \cite{WWL09}.
The unconditional energy stability and the corresponding error estimate of their proposed scheme were rigorously established.
 The second order convex splitting method for the PFC model based on the Crank-Nicolson formula was also studied in their subsequent work \cite{HWWL09}. They have proved that the Crank-Nicolson convex splitting scheme was unconditionally energy stable in the sense of the discrete free energy bounded by the initial one, which indicates the uniform $L^{\infty}$ bound of the numerical solutions. Then the corresponding error estimate of the proposed scheme could be easily derived with the help of the uniform bound of the numerical solutions, seeing \cite{DFWWZ18}.
 In \cite{GX16}, Guo and Xu presented first order and second order unconditionally energy stable schemes based on a local discontinuous Galerkin method with the convex splitting approach. An unconditionally energy stable BDF2 convex splitting scheme has been developed for the PFC model by Li, Mei and You in \cite{LMY18}.
 They rigorously established the mass conservation, the unconditional energy stability and the convergence of the proposed numerical scheme.
 Though all the above convex splitting schemes are uniquely solvable due to the gradient of a strictly convex function,
 their resulted nonlinear systems must be solved at each time step, which is relatively time-consuming.
 Thus it is highly desired to develop linear high order energy stable schemes for the PFC model \eqref{prob}.
 In \cite{YH17}, Yang et al. proposed two linear second order unconditionally energy stable schemes for the PFC equation using the invariant energy quadratization (IEQ) approach.
 Recently, Li and Shen have adopted the scalar auxiliary variable (SAV) approach to construct a linear Crank-Nicolson scheme for the PFC model in \cite{LS20}, where the unconditional energy stability and the corresponding error estimate of the fully discrete scheme were rigorously proved.

Another important feature of the PFC model is that its evolution process usually takes rather long time before it reaches the steady state. Moreover, in this process, the solution of the PFC equation usually undergoes both fast and slow changes at different time. Therefore, it is reasonable to employ some time adaptive strategies in the simulation, where unconditionally energy stable numerical schemes with variable time steps are very needed. Combining the second order Crank-Nicolson convex splitting scheme, an adaptive time-stepping strategy based on the variation of the free energy was adopted for the PFC model in \cite{ZMQ13}, where both the
steady-state solution and the dynamical changes of the solution have been resolved accurately and efficiently.
As known, the BDF2 schemes usually achieve stronger stability than the one based on the Crank-Nicolson formula, especially for the equations with strong stiffness \cite{HAX19,HX21_3,DYL18}.
Recently, an adaptive fully implicit BDF2 time-stepping method was studied for the PFC model in \cite{LJZ22}. Under a restriction on the adjacent time-step ratio $\gamma_{n}\leq3.561$, the energy stability and the corresponding $L^{2}$ error estimate of the proposed adaptive scheme were rigorously derived by using the discrete orthogonal convolution kernels and convolution inequalities. However, these adaptive time-stepping schemes in \cite{LJZ22,ZMQ13} are also nonlinear and the nonlinear iteration is unavoidable at each time step.
Very recently, combining the nonuniform BDF2 formula and the SAV approach, we successfully derived a linear second order unconditionally energy stable time stepping scheme for gradient flows with the adjacent time-step ratio $\gamma_{n}\leq 4.8465$ in \cite{HQ21}. Though it avoids solving the nonlinear system for the phase function $\phi^n$ by explicit treatment for the nonlinear term, there still exits a nonlinear algebraic equation to be solved for the auxiliary variable at each time step. %Without exactly solving the nonlinear algebraic systems by the iteration methods, it requires the time step sizes are not too large to guarantee the energy stability of the proposed adaptive BDF2 scheme.

The goal of this paper is to develop and analyze a linear second order adaptive time-stepping scheme based on the BDF2 formula for the PFC model \eqref{prob} using the SAV approach proposed in \cite{Shen17_1,Shen17_2}.
The linear BDF2 scheme based on the SAV approach has been proved unconditionally energy stable on the uniform temporal mesh for the PFC model in \cite{YH17}. However, it seems to be rather difficult to prove similar results on nonuniform temporal meshes. In this paper, we construct a linear unconditionally energy stable BDF2 scheme for the PFC model on the nonuniform temporal mesh for the first time
by using a first order approximation on the auxiliary variable. The first order treatment of the auxiliary variable will not affect the second order accuracy of the unknown atom density field $\phi^n$ by setting the positive constant $C_{0}$ large enough such that $C_{0}\geq 1/\Dt$.
Moreover, the constructed adaptive BDF2 scheme avoids solving the nonlinear algebraic equation for the auxiliary variable as needed in our previous work \cite{HQ21}. More precisely, there are only two sixth-order equations with constant coefficients to be solved at each time step, which can be solved efficiently by exiting fast Poisson solvers.
Furthermore, under a mild restriction on the adjacent time-step ratio $\gamma_{n}\leq 4.8465$, we rigorously establish the unconditional energy stability and the corresponding error estimate of the proposed fully discrete  BDF2 scheme on the nonuniform temporal mesh.

% The outline is not required, but we show an example here.
The rest of the paper is organized as follows. In \cref{sec:sect2}, we review the SAV method proposed in \cite{Shen17_1,Shen17_2}, and construct the linear BDF2 scheme on the nonuniform temporal mesh. The unconditional energy stability for the proposed scheme is rigorously proved in the sense of a modified discrete energy.
In \cref{sec:sect3}, a rigorous error estimate of the fully discrete adaptive BDF2 scheme is derived for the PFC model, which shows the second order accuracy of the numerical solution $\phi^{n}$ in time with larger enough constant $C_{0}$ such that $C_{0}\geq1/\Dt.$
Several numerical examples are given in \cref{sec:sect4} to validate the theoretical results and the efficiency of the proposed method.
Finally, the paper ends with some concluding remarks in \cref{sec:conclusions}.

%The paper is organized as follows. Our main results are in
%\cref{sec:main}, our new algorithm is in \cref{sec:alg}, experimental
%results are in \cref{sec:experiments}, and the conclusions follow in
%\cref{sec:conclusions}.

\section {Scalar auxiliary variable approach}\label{sec:sect2}
\setcounter{equation}{0}

We first introduce some notations which will be used throughout the paper. Denote $\|\cdot\|_{m}$ the standard $H^{m}$ norm. The norm and inner product of $L^{2}(\Omega):=H^{0}(\Omega)$ are denoted by $\|\cdot\|$ and $(\cdot,\cdot)$, respectively.
For the sake of simplicity, we consider the periodic boundary condition for \eqref{prob}.

The key idea of the SAV approach \cite{HAX19,Shen17_1} is to transform the nonlinear term to a simple quadratic form by introducing a scalar auxiliary variable $r(t)$, defined by
\bq\label{SAV}
r(t)=\sqrt{E_{1}(\phi)+C_{0}}:=\sqrt{\int_{\Omega}F(\phi)d\x+C_{0}},
\eq
where $F(\phi)=\frac{1}{4}\phi^{4}-\frac{S+\varepsilon}{2}\phi^{2}$ with $S\geq0$, and $C_{0}$ is a positive constant such that $E_{1}(\phi)+C_{0}>0.$
In this paper,  we further assume that  $C_{0}$ is large enough such that
\bq\label{eq_c}
\frac{C_{0}}{2}\leq E_{1}(\phi)+C_{0}\leq 2C_{0}.
\eq
Then, an equivalent system of the original problem \eqref{prob} with scalar auxiliary variable is given as
 \brr\label{re_prob}
 \begin{cases}
 \dps\frac{\partial \phi}{\partial t}=\dps \Delta\mu,\\[9pt]
 \dps\mu\dps=(\Delta+\beta)^{2}\phi+S\phi+\frac{r(t)}{\sqrt{E_{1}(\phi)+C_{0}}}F'(\phi),\\[15pt]
 \dps\frac{d r}{d t}\dps=\frac{1}{2\sqrt{E_{1}(\phi)+C_{0}}}\int_{\Omega}F'(\phi)
\frac{\partial \phi}{\partial t}d\x.
 \end{cases}
 \err
\eqref{re_prob} satisfies the following energy dissipation law %see also \cite{HAX19,Shen17_1,Shen17_2},
 \bq\label{mod_energ}
 \dps\frac{d\overline{E}(\phi,r)}{dt}=-\|\nabla\mu\|^{2}\leq0,
 \eq
 where $\overline{E}(\phi,r):=\frac{1}{2}\|(\Delta+\beta)\phi\|^{2}+\frac{S}{2}\|\phi\|^{2}+r^{2}$.
 Moreover, we have
 \bq\label{H2c}
 \overline{E}(\phi,r)\leq {\color{black}\overline{E}}(\phi_{0}(\x),r(0)),
 \eq
 which shows the $H^{2}$ bound of the solution of \eqref{re_prob}.
 From the definition of the auxiliary variable $r(t)$ in \eqref{SAV}, it follows that $\overline{E}(\phi,r)=E(\phi)+C_{0}$, which indicates that the energy dissipation law \eqref{mod_energ} is the same as \eqref{EDlaw}. %However, we will see the constructed numerical scheme in this paper preserving a modified free energy dissipation more like the form of \eqref{mod_energ}.

We are now ready to construct efficient time stepping
 schemes for \eqref{re_prob} to numerically approximate the solution $\phi$  on the nonuniform temporal mesh.  Let $\Dt_{n}:=t_{n}-t_{n-1}, n=1,2\dots, M$ be the time step size, $\gamma_{n+1}={\Dt_{n+1}}/{\Dt_{n}}$ be the adjacent time--step ratio,
 and $\tau=\max\{\Dt_{n},n=1,2,\dots, M\}$ be the maximum time step size of the temporal mesh.
\subsection{{\color{black}A} second-order BDF scheme on nonuniform temporal mesh}
%\paragraph*{\bf A first order scheme}
We firstly recall the first order scheme for solving \eqref{re_prob} reported in \cite{LS20}
 \begin{subequations}\label{eq1}
 \begin{align}
 \dps{}\frac{\phi^{n+1}-\phi^{n}}{\Dt_{n+1}}&\dps=\Delta \mu^{n+1},\label{eq1_1}\\
 \dps\mu^{n+1}&\dps=(\Delta+\beta)^{2}\phi^{n+1}+S\phi^{n+1}+\frac{r^{n+1}}{\sqrt{E_{1}^{n}+C_{0}}}F'(\phi^{n}),\label{eq1_2}\\
 \dps\frac{r^{n+1}-r^{n}}{\Dt_{n+1}}&\dps=\frac{1}{2\sqrt{E_{1}^{n}+C_{0}}}\int_{\Omega}F'(\phi^{n})
\frac{\phi^{n+1}-\phi^{n}}{\Dt_{n+1}}d\x,\label{eq1_3}
 \end{align}
 \end{subequations}
where $\phi^{n}$ is an approximation to $\phi(t_n)$ and $E^{n}_{1}:=E_{1}(\phi^{n})$.
As stated in \cite{LS20}, the scheme \eqref{eq1} is unconditionally energy stable in the sense that the following discrete energy law holds
\bq\label{eq2}
\overline{E}(\phi^{n+1},r^{n+1})-\overline{E}(\phi^{n},r^{n})\leq0.
\eq

To construct a second-order BDF scheme for \eqref{re_prob}, we firstly introduce a second order approximation $F^{j+\sigma}_{2}\phi$ to $\phi'(t)$ at $t=t_{j+\sigma}$ as follows, seeing also \cite{HQ21,HX22_1}. Here, $t_{j+\sigma}:=t_{n}+\sigma\Dt_{n+1}$ with $\frac{1}{2}\leq\sigma\leq1.$
\bq\label{BDF_s}
\dps F^{j+\sigma}_{2}\phi:=\frac{1}{\Dt_{j+1}}\Big[\frac{1+2\sigma\gamma_{j+1}}{1+\gamma_{j+1}}\phi^{j+1}-(1+(2\sigma-1)\gamma_{j+1})\phi^{j}+\frac{(2\sigma-1)\gamma^{2}_{j+1}}{1+\gamma_{j+1}}\phi^{j-1}\Big],
\eq
where  $\phi^{j}$ is an approximation to $\phi(t_{j})$.
Note that when $\sigma=1$, it becomes the classical second order BDF with variable time steps:
\be\label{BDF2b}
F^{j+1}_{2}\phi=\frac{1}{\Dt_{j+1}}\Big[\frac{1+2\gamma_{j+1}}{1+\gamma_{j+1}}\phi^{j+1}-(1+\gamma_{j+1})\phi^{j}+\frac{\gamma^{2}_{j+1}}{1+\gamma_{j+1}}\phi^{j-1}\Big],
\ee
 and Crank-Nicolson formula
\bex
F^{j+1/2}_{2}\phi=\frac{\phi^{j+1}-\phi^{j}}{\Dt_{j+1}}
\eex
for $\sigma=\frac{1}{2}$.

Now, we are ready to construct the second-order BDF scheme on nonuniform temporal mesh for the PFC system \eqref{re_prob},
 \begin{subequations}\label{BDF_2}
 \begin{align}
 &\dps F^{n+\sigma}_{2}\phi=\Delta \mu^{n+\sigma},\label{BDF_2_1}\\
 &\dps\mu^{n+\sigma}=(\Delta+\beta)^{2}\phi^{n+\sigma}+S\phi^{n+\sigma}+\frac{r^{n+1}}{\sqrt{E_{1}^{n}+C_{0}}}F'(\phi^{*,n+\sigma}),\label{BDF_2_2}\\
 &\dps\frac{r^{n+1}-r^{n}}{\Dt_{n+1}}=\frac{1}{2\sqrt{E_{1}^{n}+C_{0}}}\int_{\Omega}F'(\phi^{*,n+\sigma})
\frac{\phi^{n+1}-\phi^{n}}{\Dt_{n+1}}d\x, \label{BDF_2_3}
 \end{align}
 \end{subequations}
 where $\phi^{n+\sigma}:=\sigma\phi^{n+1}+(1-\sigma)\phi^{n}$ and $\phi^{*,n+\sigma}:=\phi^{n}+\sigma\gamma_{n+1}(\phi^{n}-\phi^{n-1})$.
  Though only the first order finite difference approximation is used in \eqref{BDF_2_3}, we will see in \cref{sec:sect3}, the proposed scheme \eqref{BDF_2} is actually of second order accuracy for $\phi$ with $C_{0}\geq 1/\tau$.

Before rigorously proving the energy stability of the scheme \eqref{BDF_2}, we
need the following essential inequality, seeing also \cite{HQ21,HX22_1},
\brr\label{ideq1}
\dps \delta_{t}\phi^{n+1}\cdot F^{n+\sigma}_{2}\phi\geq\dps\Big[g(\gamma_{n+2})+\frac{1}{2}G(\gamma_{n+1},\gamma_{n+2})\Big]\frac{| \delta_{t}\phi^{n+1}|^{2}}{\tau_{n+1}}
-g(\gamma_{n+1})\frac{| \delta_{t}\phi^{n}|^{2}}{\tau_{n}},
\err
where $\delta_{t}\phi^{n+1}=\phi^{n+1}-\phi^{n}$. And in \eqref{ideq1},  for any $0<s,z\leq\gamma_{**}(\sigma)$, $$g(\gamma_{n+1}):=\frac{(2\sigma-1)\gamma_{n+1}^{\frac{3}{2}}}{2(1+\gamma_{n+1})}>0$$ and $$G(s,z):=\frac{2+4\sigma s-(2\sigma-1)s^{\frac{3}{2}}}{1+s}-\frac{(2\sigma-1)z^{\frac{3}{2}}}{1+z}\geq0.$$  Here $\gamma_{**}(\sigma)$ is the positive root of $G(z,z)=0$. Moreover,
the root function $\gamma_{**}(\sigma)$ is decreasing for $\sigma\in[\frac{1}{2},1]$ with $\gamma_{**}(1)\approx4.8645$ and $\gamma_{**}(\sigma)\rightarrow+\infty,$ as $\sigma\rightarrow\frac{1}{2}.$ Thus $\gamma_{**}(\sigma)\geq 4.8645$ for any $\sigma\in[\frac{1}{2},1].$
Furthermore, for $4\leq\gamma_{*}(\sigma)<\gamma_{**}(\sigma)$, it holds
\beq
G(s,z)\geq \min\{G(0,\gamma_{*}(\sigma)),G(\gamma_{*}(\sigma),\gamma_{*}(\sigma))\}\geq G(\gamma_{*}(\sigma),\gamma_{*}(\sigma))>0
\eeq
for any $0<s,z\leq\gamma_{*}(\sigma)<\gamma_{**}(\sigma)$.

To prove the following theorem on the unconditional energy stability of the  scheme \eqref{BDF_2},
we further introduce the inverse Laplace operator $\psi=(-\Delta)^{-1}u$ {\color{black}for $u\in L^{2}(\Omega)$ and $\int_{\Omega}u d\x=0$, as defined in \cite{WW11}:}
\bry
\begin{cases}
-\Delta \psi= u,\\
\dps\int_{\Omega}\psi d\x=0.
\end{cases}
\ery

\begin{theorem}\label{st_the}
For $1/2\leq\sigma\leq1$ and $0<\gamma_{n+1}\leq\gamma_{**}(\sigma), n=1,\cdots,M-1$,
 it holds for the scheme \eqref{BDF_2} that
\bq\label{eq2_1}
\dps \widetilde{E}^{n+1}-\widetilde{E}^{n}\leq0,
\eq
where the  discrete modified  energy $\widetilde{E}^{n}$ is defined by:
\beq%\label{modif_E}
\dps \widetilde{E}^{n}:=
\dps\overline{E}(\phi^{n},r^{n})+\frac{(2\sigma-1)\gamma_{n+1}^{\frac{3}{2}}}{2+2\gamma_{n+1}}\frac{\|\nabla^{-1}(\phi^{n}-\phi^{n-1})\|^{2}}{\tau_{n}}.
\eeq
\end{theorem}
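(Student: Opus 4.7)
The proof will combine the algebraic BDF2 inequality \eqref{ideq1} with the scheme equations \eqref{BDF_2_1}--\eqref{BDF_2_3}. The strategy is to arrive at an estimate for $-(\delta_t\phi^{n+1},\mu^{n+\sigma})$, first bounding it from below via \eqref{ideq1} and then evaluating it from above using \eqref{BDF_2_2}. Integrating \eqref{BDF_2_1} over $\Omega$ shows $\int_\Omega F_2^{n+\sigma}\phi\,d\x=0$, so mass is conserved and $\delta_t\phi^{n+1}=\phi^{n+1}-\phi^n$ is mean-zero, which makes $\nabla^{-1}\delta_t\phi^{n+1}$ well defined. Pairing $\delta_t\phi^{n+1}$ (rather than $F_2^{n+\sigma}\phi$) with $\mu^{n+\sigma}$ is forced by the asymmetry between the operators appearing in \eqref{BDF_2_1} and \eqref{BDF_2_3}: only this pairing lets us invoke \eqref{BDF_2_3} to collapse the nonlinear SAV contribution into a clean difference in $r$.

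For the lower bound, I would apply the pointwise inequality \eqref{ideq1}---which is a purely algebraic consequence of the BDF2 coefficients and hence valid for any scalar field---to $(-\Delta)^{-1/2}\phi$ and integrate over $\Omega$. Using that $(-\Delta)^{-1/2}$ commutes with $\delta_t$ and $F_2^{n+\sigma}$, that $\|\nabla^{-1}u\|^2=(u,(-\Delta)^{-1}u)$, and that by \eqref{BDF_2_1} one has $(-\Delta)^{-1}F_2^{n+\sigma}\phi=-\mu^{n+\sigma}$, one obtains
\begin{equation*}
-(\delta_t\phi^{n+1},\mu^{n+\sigma})\ge\Big[g(\gamma_{n+2})+\tfrac{1}{2}G(\gamma_{n+1},\gamma_{n+2})\Big]\frac{\|\nabla^{-1}\delta_t\phi^{n+1}\|^2}{\Dt_{n+1}}-g(\gamma_{n+1})\frac{\|\nabla^{-1}\delta_t\phi^n\|^2}{\Dt_n}.
\end{equation*}
The weights $g(\gamma_{n+1})=(2\sigma-1)\gamma_{n+1}^{3/2}/(2+2\gamma_{n+1})$ and $g(\gamma_{n+2})$ appearing here are precisely the coefficients of the $\nabla^{-1}\delta_t$-penalties in $\widetilde E^n$ and $\widetilde E^{n+1}$, which is what motivates the exact form of the modified energy.

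For the upper bound, I would evaluate $(\delta_t\phi^{n+1},\mu^{n+\sigma})$ by substituting \eqref{BDF_2_2}. The two linear terms are handled with the elementary identity $(a-b,\sigma a+(1-\sigma)b)=\tfrac12(|a|^2-|b|^2)+(\sigma-\tfrac12)|a-b|^2$ applied successively with $a=(\Delta+\beta)\phi^{n+1}$, $b=(\Delta+\beta)\phi^n$ and with $a=\phi^{n+1}$, $b=\phi^n$; this produces $\tfrac12\|(\Delta+\beta)\phi^{n+1}\|^2-\tfrac12\|(\Delta+\beta)\phi^n\|^2+\tfrac{S}{2}\|\phi^{n+1}\|^2-\tfrac{S}{2}\|\phi^n\|^2$ plus nonnegative $(\sigma-\tfrac12)$-weighted squared-increment remainders. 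The nonlinear SAV term is where the first-order update of $r$ is exploited: by \eqref{BDF_2_3}, $(F'(\phi^{*,n+\sigma}),\delta_t\phi^{n+1})=2\sqrt{E_1^n+C_0}\,(r^{n+1}-r^n)$, so this contribution reduces to $2r^{n+1}(r^{n+1}-r^n)=(r^{n+1})^2-(r^n)^2+(r^{n+1}-r^n)^2$. Summing the three pieces yields
\begin{equation*}
(\delta_t\phi^{n+1},\mu^{n+\sigma})=\overline E(\phi^{n+1},r^{n+1})-\overline E(\phi^n,r^n)+(\sigma-\tfrac12)\big[\|(\Delta+\beta)\delta_t\phi^{n+1}\|^2+S\|\delta_t\phi^{n+1}\|^2\big]+(r^{n+1}-r^n)^2.
\end{equation*}

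Combining the two bounds and regrouping with the definition of $\widetilde E^n$ then produces
\begin{equation*}
\widetilde E^{n+1}-\widetilde E^n\le-\tfrac{1}{2}G(\gamma_{n+1},\gamma_{n+2})\frac{\|\nabla^{-1}\delta_t\phi^{n+1}\|^2}{\Dt_{n+1}}-(\sigma-\tfrac12)\big[\|(\Delta+\beta)\delta_t\phi^{n+1}\|^2+S\|\delta_t\phi^{n+1}\|^2\big]-(r^{n+1}-r^n)^2.
\end{equation*}
Under the hypotheses $\sigma\in[\tfrac12,1]$ and $\gamma_{n+1},\gamma_{n+2}\le\gamma_{**}(\sigma)$ the first term is nonpositive by $G\ge0$, the second is nonpositive since $\sigma\ge\tfrac12$, and the last is trivially nonpositive, establishing \eqref{eq2_1}. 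The main technical obstacle is the bookkeeping of the $\nabla^{-1}$-version of \eqref{ideq1}---verifying that the lower-bound weights produced by \eqref{ideq1} match the coefficient $(2\sigma-1)\gamma_{n+1}^{3/2}/(2+2\gamma_{n+1})$ in $\widetilde E^n$ exactly, and that the $(\sigma-\tfrac12)$-weighted leftover terms and the $(r^{n+1}-r^n)^2$ residual from the SAV update all carry the favorable sign.
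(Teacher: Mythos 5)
Your proposal is correct and follows essentially the same route as the paper: test \eqref{BDF_2_1} with $(-\Delta)^{-1}\delta_t\phi^{n+1}$, test \eqref{BDF_2_2} with $\delta_t\phi^{n+1}$, multiply \eqref{BDF_2_3} by $2r^{n+1}$, sum to cancel the nonlinear SAV term, and then invoke \eqref{ideq1} together with the $\sigma$-weighted square-difference identities \eqref{Id}. Your added remarks (mass conservation ensuring $\nabla^{-1}\delta_t\phi^{n+1}$ is well defined, and the explicit sign bookkeeping of the leftover terms) only make explicit what the paper leaves implicit.
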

\begin{proof}
Taking the inner products of \eqref{BDF_2_1} and \eqref{BDF_2_2} with $(-\Delta)^{-1}\delta_{t}\phi^{n+1}$, $\delta_{t}\phi^{n+1}$ respectively, and multiplying \eqref{BDF_2_3} with $2r^{n+1}$, we obtain
\bry
&\dps\Big(\nabla^{-1}F^{n+\sigma}_{2}\phi,\nabla^{-1}\delta_{t}\phi^{n+1})\Big)=-(\mu^{n+\sigma},\delta_{t}\phi^{n+1}),\\[8pt]
 &\dps\Big(\mu^{n+\sigma},\delta_{t}\phi^{n+1}\Big)
 =\big((\Delta+\beta)\phi^{n+\sigma},(\Delta+\beta)\delta_{t}\phi^{n+1}\big)+S\big(\phi^{n+\sigma},\delta_{t}\phi^{n+1}\big)\\
 &\hspace{2.8cm}\dps+\frac{r^{n+1}}{\sqrt{E_{1}^{n}+C_{0}}}\Big(F'(\phi^{*,n+1}),\delta_{t}\phi^{n+1}\Big), \\[9pt]
 &\dps\frac{2r^{n+1}(r^{n+1}-r^{n})}{\Dt_{n+1}}=\frac{r^{n+1}}{\sqrt{E_{1}^{n}+C_{0}}}\int_{\Omega}F'(\phi^{*,n+1})
\frac{\delta_{t}\phi^{n+1}}{\Dt_{n+1}}d\x.
\ery
From the above equations, it follows that
\brr\label{eqt1}
&\Big(\nabla^{-1}F^{n+\sigma}_{2}\phi,\nabla^{-1}\delta_{t}\phi^{n+1}\Big)+\big((\Delta+\beta)\phi^{n+\sigma},(\Delta+\beta)\delta_{t}\phi^{n+1}\big)\\
\\
&+S\big(\phi^{n+\sigma},\delta_{t}\phi^{n+1}\big)+2r^{n+1}(r^{n+1}-r^{n})=0.
\err
Combining the inequality \eqref{ideq1} and
 \brr\label{Id}
&2a^{k+\sigma}(a^{k+1}-a^{k})
=|a^{k+1}|^{2}-|a^{k}|^{2}+(2\sigma-1)|a^{k}-a^{k-1}|^{2},\\[9pt]
&2a^{k+1}(a^{k+1}-a^{k})
=|a^{k+1}|^{2}-|a^{k}|^{2}+|a^{k+1}-a^{k}|^{2},
\err
we can easily derive the modified {discrete} energy law \eqref{eq2_1} {from \eqref{eqt1}}. Then we complete the proof.
\end{proof}

\subsection{Fully discrete scheme with Fourier spectral method}
In this work, we only present the results in the two dimensional space. The results in the three dimensional space can be obtained in a similar way.

Let $\Omega:=(0,L)^{2}$ and we use the uniform Fourier mode number of $2N+1$ for each direction. The trial and test function space of the Fourier spectral method is defined by
\beq
X_{N}:=\mbox{span}\{e^{i2(kx+ly)\pi/L}:-N\leq k,l \leq N\},
\eeq
where $i=\sqrt{-1}$.
%Thus, the Fourier spectral method for the first order semi-discrete problem \eqref{eq1} reads:  to find $\phi^{n+1}_{N}\in X_{N}$ and $r^{n+1}$ such that
%%\begin{subequations}\label{eqq1}
%% \begin{align}
%% &\dps\Big(\frac{\phi^{n+1}_{N}-\phi^{n}_{N}}{\Dt_{n+1}},p_{N}\Big)+\dps(\nabla(\Delta+\beta)\phi^{n+1}_{N},\nabla(\Delta+\beta)p_{N})+S(\nabla\phi^{n+1}_{N},\nabla p_{N})\nonumber\\
%% &+\frac{r^{n+1}}{\sqrt{E_{1,N}^{n}+C_{0}}}(\nabla F'(\phi^{n}_{N}),\nabla p_{N})=0, ~~\forall p_{N}\in X_{N}\label{eqq1_1}\\
%% &\dps\frac{r^{n+1}-r^{n}}{\Dt_{n+1}}=\dps\frac{1}{2\sqrt{E_{1,N}^{n}+C_{0}}}\int_{\Omega}F'(\phi^{n}_{N})
%%\frac{\phi^{n+1}_{N}-\phi^{n}_{N}}{\Dt_{n+1}}d\x,\label{eqq1_2}
%% \end{align}
%% \end{subequations}
%\begin{subequations}\label{eqq1}
% \begin{align}
% \dps\Big(\frac{\delta_{t}\phi^{n+1}_{N}}{\Dt_{n+1}},p_{N}\Big)=&\dps-(\nabla\mu^{n+1}_{N},\nabla p_{N}), ~~~ \forall p_{N}\in X_{N},\label{eqq1_1}\\
%\dps(\mu^{n+1}_{N},q_{N})= &\dps\big((\Delta+\beta)\phi^{n+1}_{N},(\Delta+\beta)q_{N}\big)+S(\phi^{n+1}_{N},q_{N}) \nonumber\\
%&\dps+\frac{r^{n+1}}{\sqrt{E_{1,N}^{n}+C_{0}}}(F'(\phi^{n}_{N}),q_{N}),~~~ \forall q_{N}\in X_{N},\label{eqq1_2}\\
% \dps\frac{r^{n+1}-r^{n}}{\Dt_{n+1}}=&\dps\frac{1}{2\sqrt{E_{1,N}^{n}+C_{0}}}\int_{\Omega}F'(\phi^{n}_{N})
%\frac{\phi^{n+1}_{N}-\phi^{n}_{N}}{\Dt_{n+1}}d\x,\label{eqq1_3}
% \end{align}
% \end{subequations}
% where $E^{n}_{1,N}:=E_{1}(\phi^{n}_{N})$.
By taking the Fourier spectral discretization in space, we can have the following fully discrete scheme of the semi-discrete  approach \eqref{BDF_2}: finding $\phi^{n+1}_{N}\in X_{N}$ and $r^{n+1}$ such that
 \begin{subequations}\label{BDF2_2}
 \begin{align}
 \dps (F^{n+\sigma}_{2}\phi_{N},p_{N})=&\dps-(\nabla\mu^{n+\sigma}_{N},\nabla p_{N}),~~~\forall p_{N}\in X_{N},\label{BDF2_2_1}\\
 \dps(\mu^{n+\sigma}_{N},q_{N})=&\dps\big((\Delta+\beta)\phi^{n+\sigma}_{N},(\Delta+\beta)q_{N})+S(\phi^{n+\sigma}_{N},q_{N})\nonumber\\
 &\dps+\frac{r^{n+1}}{\sqrt{E_{1,N}^{n}+C_{0}}}(F'(\phi^{*,n+\sigma}_{N}),q_{N}),~~~\forall q_{N}\in X_{N}\label{BDF2_2_2}\\
 \dps\frac{r^{n+1}-r^{n}}{\Dt_{n+1}}=&\dps\frac{1}{2\sqrt{E_{1,N}^{n}+C_{0}}}\int_{\Omega}F'(\phi^{*,n+\sigma}_{N})
\frac{\phi^{n+1}_{N}-\phi^{n}_{N}}{\Dt_{n+1}}d\x, \label{BDF2_2_3}
 \end{align}
 \end{subequations}
 where $E^{n}_{1,N}:=E_{1}(\phi^{n}_{N})$, $\phi^{n+\sigma}_{N}:=\sigma\phi^{n+1}_{N}+(1-\sigma)\phi^{n}_{N}$ and $\phi^{*n+\sigma}_{N}:=\phi^{n}_{N}+\sigma\gamma_{n+1}(\phi^{n}_{N}-\phi^{n-1}_{N})$.

 Since the proof of the unconditional energy stability for \eqref{BDF2_2} is essentially similar to that of the semi-discrete scheme \eqref{BDF_2}, we only state the following theorem and omit the proof.
  \begin{theorem}\label{st_full}
 % For the fully discrete scheme \eqref{eqq1}, it holds
%  \bq\label{stab_ful1}
%\overline{E}(\phi^{n+1}_{N},r^{n+1})-\overline{E}(\phi^{n}_{N},r^{n})\leq0;
%\eq
For the fully discrete  scheme \eqref{BDF2_2}, if $1/2\leq\sigma\leq1$ and $0<\gamma_{n+1}\leq\gamma_{**}(\sigma), n=1,2,\cdots,M-1$,
 we have
\bq\label{stab_ful2}
\dps \widetilde{E}^{n+1}_{N}-\widetilde{E}^{n}_{N}\leq0,
\eq
where $\widetilde{E}^{n}_{N}$ is defined by:
\beq%\label{modif_E}
\dps \widetilde{E}^{n}_{N}:=\overline{E}(\phi^{n}_{N},{\color{black}r^{n}})+
\dps\frac{(2\sigma-1)\gamma_{n+1}^{\frac{3}{2}}}{2+2\gamma_{n+1}}\frac{\|\nabla^{-1}(\phi^{n}_{N}-\phi^{n-1}_{N})\|^{2}}{\tau_{n}}.
\eeq
\end{theorem}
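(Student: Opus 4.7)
The plan is to mirror the semi-discrete proof of \cref{st_the} almost line by line, replacing the continuous test functions by their Galerkin counterparts in $X_N$. All the nontrivial analytical ingredients --- the BDF2 kernel inequality \eqref{ideq1} and the elementary identities \eqref{Id} --- are purely algebraic and pass unchanged to the fully discrete level, so the only step that requires genuine care is ensuring that the discrete inverse Laplacian is an admissible test function.

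I would begin by checking that the scheme preserves the spatial mean. Taking $p_N \equiv 1 \in X_N$ in \eqref{BDF2_2_1} gives $(F^{n+\sigma}_2 \phi_N, 1)=0$, which by induction fixes $\int_\Omega \phi_N^n\, d\x = \int_\Omega \phi_N^0\, d\x$ for every $n$; in particular each increment $\delta_t \phi_N^{n+1}$ has zero mean, so $(-\Delta)^{-1}\delta_t \phi_N^{n+1}$ is a well-defined element of $X_N$. I would then test \eqref{BDF2_2_1} against $p_N = (-\Delta)^{-1}\delta_t \phi_N^{n+1}$, test \eqref{BDF2_2_2} against $q_N = \delta_t \phi_N^{n+1}$, and multiply \eqref{BDF2_2_3} by $2 r^{n+1}$. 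Adding the three resulting scalar identities causes the SAV nonlinearity containing $F'(\phi_N^{*,n+\sigma})$ to cancel, producing the fully discrete counterpart of \eqref{eqt1}:
\begin{equation*}
\bigl(\nabla^{-1} F^{n+\sigma}_2 \phi_N,\,\nabla^{-1} \delta_t \phi_N^{n+1}\bigr) + \bigl((\Delta+\beta)\phi_N^{n+\sigma},(\Delta+\beta)\delta_t \phi_N^{n+1}\bigr) + S\bigl(\phi_N^{n+\sigma},\delta_t \phi_N^{n+1}\bigr) + 2 r^{n+1}(r^{n+1} - r^n) = 0.
\end{equation*}

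To close the argument I would apply the kernel inequality \eqref{ideq1} to the first term inside the $L^2$ inner product --- legitimate because $F^{n+\sigma}_2$ has scalar coefficients and therefore commutes with $\nabla^{-1}$, so integrating the pointwise inequality over $\Omega$ yields $\tfrac12 G(\gamma_{n+1},\gamma_{n+2})\|\nabla^{-1}\delta_t \phi_N^{n+1}\|^2/\tau_{n+1}$ plus telescoping $g$-contributions in the $\nabla^{-1}$ norm. The second and third inner products are handled by the first identity in \eqref{Id} applied pointwise to $(\Delta+\beta)\phi_N^k$ and $\phi_N^k$, and the last scalar term is handled by the second identity applied to $r^k$. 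The quadratic remainders so produced, together with $\tfrac12 G(\gamma_{n+1},\gamma_{n+2})\|\nabla^{-1}\delta_t \phi_N^{n+1}\|^2/\tau_{n+1}$, are all nonnegative under the hypothesis $\gamma_{n+1}\leq \gamma_{**}(\sigma)$; absorbing $g(\gamma_{n+2})\|\nabla^{-1}\delta_t \phi_N^{n+1}\|^2/\tau_{n+1}$ into the modified energy $\widetilde{E}^{n+1}_N$ and discarding the other nonnegative remainders yields \eqref{stab_ful2}. The main --- in fact the only --- obstacle is the mass-conservation check at the beginning, since without it the $(-\Delta)^{-1}$ machinery driving the entire argument would be unavailable; this is precisely why the authors were comfortable omitting the proof.
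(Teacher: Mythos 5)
Your proposal is correct and follows essentially the same route as the paper, which omits the proof precisely because it is the semi-discrete argument of \cref{st_the} transplanted verbatim to the Galerkin setting with test functions $p_N=(-\Delta)^{-1}\delta_t\phi_N^{n+1}$ and $q_N=\delta_t\phi_N^{n+1}$. Your preliminary mass-conservation check (testing with $p_N\equiv 1$ and using that the BDF2 coefficients sum to zero) is a worthwhile detail the paper leaves implicit, since it is what makes the discrete inverse Laplacian an admissible test function in $X_N$.
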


\section{Error analysis}\label{sec:sect3}
\setcounter{equation}{0}
In this section, we establish a rigorous error estimate for the fully discrete  scheme \eqref{BDF2_2} with a mild smoothness requirement on the exact solution.
For simplicity, we only focus our attention on the investigation of the numerical solutions for the special case with $\sigma=1$ in what follows, and the obtained results are also right for the general parameter $\sigma\in[1/2, 1].$  We denote $\gamma_{*}:=\gamma_{*}(1)$.
From \eqref{stab_ful2}, we can deduce that for any $\phi_{0}(\x)\in H^{2}(\Omega)$ and $0<\gamma_{n}\leq\gamma_{*}$,
\bq\label{H2d}
\|(\Delta+\beta)\phi^{n}_{N}\|+\|\phi^{n}_{N}\|+|r^{n}|\leq \mathcal{M}, ~~~ n=1,2,\cdots,M
\eq
for the fully discrete second order scheme \eqref{BDF2_2}.
Here $\mathcal{M}$ is a positive constant only depending on $\Omega$ and the initial condition $\phi_{0}(\x)$.
This indicates the $H^{2}$ bound of the numerical solution and the uniform bound of the auxiliary variable $r^{n}$ for all $n=1,2,\cdots, M.$

We recall the $L^2$ orthogonal projection $\Pi_{N}$: $L^{2}(\Omega)\rightarrow X_{N}$, defined by
\bq\label{project}
(\Pi_{N}u-u,v)=0,~~\forall v\in X_{N}.
\eq
It is obvious that $\Pi_{N}u$ is the truncated Fourier series, namely
\beq
\Pi_{N}u=\sum_{k,l=-N}^{N}\widehat{u}_{k,l}e^{i2(kx+ly)\pi/L},
\eeq
with
\beq
\widehat{u}_{k,l}=(u,e^{i2(kx+ly)\pi/L})=\frac{1}{|\Omega|}\int_{\Omega}u(x,y) {\color{black}e^{-i2(kx+ly)\pi/L}}dxdy.
\eeq
For this $L^{2}$ orthogonal projection $\Pi_{N}$, we have the following optimal estimate, seeing also \cite{STW10},
\bq\label{proj_err}
\|\Pi_{N}u-u\|_{\mu}\leq C N^{\mu-m}\|u\|_{m},
\eq
for any $u\in H^{m}_{per}(\Omega)$ and $0\leq\mu\leq m$. Here $H^{m}_{per}(\Omega)$ is the subspace of $H^{m}(\Omega)$, which consists of functions with derivatives of order up to $m-1$ being $L$-periodic.
We denote $\overline{e}^{n}_{u}:=u^{n}_{N}-\Pi_{N}u(t_{n})$ with $\overline{e}^{0}_{u}=0$, $\widehat{e}^{n}_{u}:=\Pi_{N}u(t_{n})-u(t_{n})$, $e^{n}_{r}:=r^{n}-r(t_{n})$  and $\phi(t_{*,n+1}):=\phi(t_{n})+\gamma_{n+1}(\phi(t_{n})-\phi(t_{n-1})).$ Then we have
$$e^{n}_{\phi}:=\phi^{n}_{N}-\phi(t_{n})=\overline{e}^{n}_{\phi}+\widehat{e}^{n}_{\phi}, ~e^{n}_{\mu}:=\mu^{n}_{N}-\mu(t_{n})=\overline{e}^{n}_{\mu}+\widehat{e}^{n}_{\mu}.$$
For simplicity, we also denote $\delta_{t}\overline{e}^{n}_{u}:=\overline{e}^{n}_{u}-\overline{e}^{n-1}_{u}$.
The error estimates of $\phi^n_N$ and $r^n$ are derived in the following theorem.
%$e^{n}_{\phi}$ and $e^{n}_{r}$ are derived in the following theorem.
\begin{theorem}\label{th1}
 Assume that $\phi^{0}:=\phi(\x,0)\in H^{2}(\Omega), 0<\gamma_{n}\leq\gamma_{*},$ $\Dt_{1}\leq C \Dt^{\frac{4}{3}}$, and $F(\cdot)\in C^{3}(\mathbb{R})$. Provided that
 \brr\label{sm_cd}
&\dps\phi\in L^{\infty}(0,T;H^{m}_{per}), \phi_{t}\in L^{\infty}(0,T;H^{m}_{per})\cap L^{2}(0,T;H^{1}),\\[7pt]
&\dps {\phi_{tt}\in L^{2}(0,T;H^{1}), \phi_{ttt}\in L^{2}(0,T;H^{-1})}
\err
for $m\geq2$, it holds
\brr\label{equa3}
&\dps\frac{\gamma^{\frac{3}{2}}_{n+2}}{2(1+\gamma_{n+2})}\frac{\|\nabla^{-1}(e^{n+1}_{\phi}-e^{n}_{\phi})\|^{2}}{\Dt_{n+1}}
+\frac{1}{2}\|(\Delta+\beta) e^{n+1}_{\phi}\|^{2}+\frac{S}{2}\|e^{n+1}_{\phi}\|^{2}+|e^{n+1}_{r}|^{2}\\[9pt]
\leq &\dps C\exp(T)\Big[\frac{\Dt^{2}}{C_{0}}\int_{0}^{T}\big(\|\phi_{t}(s)\|^{2}_{H^{1}}+\|\phi_{tt}(s)\|^{2}\big)ds+\Dt^{4}\int_{0}^{T}\big(\|\phi_{tt}(s)\|^{2}_{H^{1}}\\[9pt]
&\dps+\|\phi_{ttt}(s)\|^{2}_{H^{-1}}\big)ds+T\Big(N^{-2m}\|\phi_{t}\|^{2}_{L^{\infty}(0,T;H^{m})}+N^{4-2m}\|\phi\|_{L^{\infty}(0,T;H^{m})}\Big)\Big].
\err
Moreover, we have
\brr\label{equa4}
&\dps\frac{\gamma^{\frac{3}{2}}_{n+2}}{2(1+\gamma_{n+2})}\frac{\|\nabla^{-1}(e^{n+1}_{\phi}-e^{n}_{\phi})\|^{2}}{\Dt_{n+1}}
+\frac{1}{2}\|(\Delta+\beta)e^{n+1}_{\phi}\|^{2}+\frac{S}{2}\|e^{n+1}_{\phi}\|^{2}\\[11pt]
\leq &\dps C\exp(T)\Big[\frac{\Dt^{2}}{C^{2}_{0}}\int_{0}^{T}\big(\|\phi_{t}(s)\|^{2}_{H^{1}}+\|\phi_{tt}(s)\|^{2}\big)ds
+\Dt^{4}\int_{0}^{T}\big(\|\phi_{tt}(s)\|^{2}_{H^{1}}\\[11pt]
&\dps+\|\phi_{ttt}(s)\|^{2}_{H^{-1}}\big)ds+T\Big(N^{-2m}\|\phi_{t}\|^{2}_{L^{\infty}(0,T;H^{m})}+N^{4-2m}\|\phi\|_{L^{\infty}(0,T;H^{m})}\Big)\Big],
\err
and if we set the positive constant $C_{0}$ large enough such that $C_{0}\geq \frac{1}{\Dt}$, it holds
\brr\label{equa4_1}
&\dps\frac{\gamma^{\frac{3}{2}}_{n+2}}{2(1+\gamma_{n+2})}\frac{\|\nabla^{-1}(e^{n+1}_{\phi}-e^{n}_{\phi})\|^{2}}{\Dt_{n+1}}
+\frac{1}{2}\|(\Delta+\beta) e^{n+1}_{\phi}\|^{2}+\frac{S}{2}\|e^{n+1}_{\phi}\|^{2}\\[11pt]
\leq &\dps C\exp(T)\Big[\Dt^{4}
\int_{0}^{T}\big(\|\phi_{t}(s)\|^{2}_{H^{1}}+\|\phi_{tt}(s)\|^{2}_{H^{1}}+\|\phi_{ttt}(s)\|^{2}_{H^{-1}}\big)ds\\[11pt]
&\dps+T\Big(N^{-2m}\|\phi_{t}\|^{2}_{L^{\infty}(0,T;H^{m})}+N^{4-2m}\|\phi\|_{L^{\infty}(0,T;H^{m})}\Big)\Big].
\err

\end{theorem}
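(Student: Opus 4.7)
My plan is to follow the energy-testing strategy used in the proof of \cref{st_the,st_full}, applied to the error equations, and to close the estimate via a discrete Gronwall argument. The essential new feature is that the first-order BDF1 approximation of $r$ in \eqref{BDF2_2_3} is only $O(\tau)$-consistent, while the BDF2 treatment of $\phi$ is $O(\tau^2)$; the discrepancy is reconciled by the SAV rescaling factor $1/\sqrt{E_1+C_0}\le\sqrt{2/C_0}$ (by \eqref{eq_c}), which contributes an extra $1/\sqrt{C_0}$ at every occurrence of the $r$-error inside the $\phi$-equation and, eventually, upgrades the final accuracy once $C_0\ge 1/\Dt$ is imposed.

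First I would evaluate \eqref{re_prob} at $t=t_{n+1}$ and subtract the scheme \eqref{BDF2_2} to obtain three coupled error equations with truncation remainders $R^{n+1}_\phi$, $R^{n+1}_\mu$, $R^{n+1}_r$. Taylor expansion with integral remainder, together with the extrapolation identity $\phi^{*,n+1}-\phi(t_{n+1})=O(\tau^2)$, gives that $R^{n+1}_\phi$ is $O(\tau^2)$ in $H^{-1}$ (involving $\phi_{ttt}$), that $R^{n+1}_\mu$ collects extrapolation and SAV-consistency contributions of size $O(\tau^2)$, while $R^{n+1}_r$ is only $O(\tau)$ (involving $\phi_{tt}$, which by \eqref{SAV} is controlled by $\phi_t$ and $\phi_{tt}$). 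The decomposition $e^n_\phi=\overline{e}^n_\phi+\widehat{e}^n_\phi$ combined with \eqref{proj_err} isolates the spatial contributions $N^{-2m}$ and $N^{4-2m}$ from the temporal error, and the startup error inherited from the first-order scheme \eqref{eq1} is controlled by the hypothesis $\Dt_1\le C\Dt^{4/3}$.

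Mirroring the proof of \cref{st_the}, I would test the $\phi$-error equation with $(-\Delta)^{-1}\delta_t\overline{e}^{n+1}_\phi$, the $\mu$-error equation with $\delta_t\overline{e}^{n+1}_\phi$, and multiply the $r$-error equation by $2e^{n+1}_r$. Summing and invoking the essential inequality \eqref{ideq1} together with the identities \eqref{Id} produces precisely the telescoping quantity on the left-hand side of \eqref{equa3}, augmented by $|e^{n+1}_r|^2-|e^n_r|^2+|e^{n+1}_r-e^n_r|^2$ from the $r$-equation. The crucial coupling term
\beq
\Bigl(\tfrac{r^{n+1}}{\sqrt{E^n_{1,N}+C_0}}F'(\phi^{*,n+1}_N)-\tfrac{r(t_{n+1})}{\sqrt{E_1(\phi(t_{n+1}))+C_0}}F'(\phi(t_{n+1})),\,\delta_t\overline{e}^{n+1}_\phi\Bigr)
\eeq
is then split into three pieces isolating, in turn, the error $e^{n+1}_r$, the error in the SAV denominator, and the extrapolation error in $F'(\phi^{*,n+1}_N)$. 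Each piece is bounded via Cauchy--Schwarz and Young's inequality, exploiting the uniform $H^2$-bound \eqref{H2d}, the 2D embedding $H^2\hookrightarrow L^\infty$, the smoothness $F\in C^3$, and \eqref{eq_c}; the dissipative $\|(\Delta+\beta)\overline{e}^{n+1}_\phi\|^2$ term and the BDF2 quadratic form $\|\nabla^{-1}\delta_t\overline{e}^{n+1}_\phi\|^2/\Dt_{n+1}$ absorb the higher-order coupling contributions, and a discrete Gronwall inequality then yields \eqref{equa3}.

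From \eqref{equa3} one reads off $|e^{n+1}_r|^2=O(\Dt^2/C_0+\Dt^4)$; reinserting this sharpened bound into the coupling estimate on the $\phi$-side, where $e^{n+1}_r$ always appears multiplied by $1/\sqrt{E^n_{1,N}+C_0}\le\sqrt{2/C_0}$, gains one further power of $1/C_0$ and upgrades the right-hand side to the order $\Dt^2/C_0^2$ appearing in \eqref{equa4}. The corollary \eqref{equa4_1} then follows instantly by setting $C_0\ge 1/\Dt$, which converts $\Dt^2/C_0^2$ into $\Dt^4$. The main obstacle is the nonlinear coupling analysis in the previous step: the $O(\tau)$ inconsistency of the $r$-equation must be prevented from polluting the $\phi$-accuracy, and this is achieved precisely by tracking the $1/\sqrt{C_0}$ SAV-scaling at every appearance of $e^{n+1}_r$; the bootstrap from \eqref{equa3} to \eqref{equa4} (rather than a single one-shot estimate) is what produces the sharper $\phi$-only bound and the eventual $O(\Dt^2)$ convergence of $\phi^n$.
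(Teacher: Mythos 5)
Your proposal is correct and follows essentially the same route as the paper: the same choice of test functions $(-\Delta)^{-1}\delta_t\overline{e}^{n+1}_\phi$, $\delta_t\overline{e}^{n+1}_\phi$ and multiplier $2e^{n+1}_r$, the same three-way splitting of the nonlinear coupling term (the paper's $J_1^n$, $J_2^n$ and the $e_r$-term), the same tracking of the $1/\sqrt{C_0}$ SAV scaling through the first-order truncation of $r$ (the paper's $v_1^n$, $v_2^n$ via the $r_{tt}$ bound), and the same bootstrap from the coupled estimate to the $\phi$-only estimate by reinserting $|e^{n+1}_r|^2=O(\Dt^2/C_0+\Dt^4)$ into the factor $|1-r^{n+1}/\sqrt{E^n_{1,N}+C_0}|^2$.
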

\begin{proof}
 From \eqref{H2c}, \eqref{H2d} and the Sobolev embedding theorem, $H^{2}\subset L^{\infty}$, it follows that $\|\phi(t)\|_{L^{\infty}},$ $\|\phi^{n}\|_{L^{\infty}}\leq C,$ where the positive constant $C$ only depends on $\phi^{0}, \Omega$ and $T$. Combining the definition of $F(\cdot)$, we have
\bq\label{equ1}
\|F(\cdot)\|_{L^{\infty}},\|F'(\cdot)\|_{L^{\infty}},\|F''(\cdot)\|_{L^{\infty}},\|F'''(\cdot)\|_{L^{\infty}}\leq C,
\eq
for $\phi$ and $\phi^{n}, n=0,1,2,\cdots, M$. In what follows, we
use $\mathcal{A}\lesssim\mathcal{B}$ to represent that $\mathcal{A}\leq C\mathcal{B}$, where the general positive constant C is independent of $C_{0}$.
 From the definition of $r(t)$ in \eqref{SAV}, we can derive
 \beq
 r_{tt}=-\frac{\big(\int_{\Omega}F'(\phi)\phi_{t}d\x\big)^{2}}{4\sqrt{(E_{1}(\phi)+C_{0})^{3}}}+\frac{\int_{\Omega}\big[F''(\phi)\phi^{2}_{t}+F'(\phi)\phi_{tt}\big]d\x}{2\sqrt{E_{1}(\phi)+C_{0}}}.
 \eeq
Moreover, together with \eqref{eq_c}, \eqref{sm_cd} and \eqref{equ1}, and using H\"{o}lder inequality, we can deduce that
\bq\label{equa1}
\int_{t_{n}}^{t_{n+1}}|r_{tt}|^{2}dt\lesssim \frac{1}{C_0}\int_{t_{n}}^{t_{n+1}}(\|\phi_{t}\|^{2}_{L^{4}}+\|\phi_{tt}\|^{2})dt\lesssim\frac{1}{C_0}\int_{t_{n}}^{t_{n+1}}(\|\phi_{t}\|^{2}_{H^{1}}+\|\phi_{tt}\|^{2})dt,
\eq
where we have used the Sobolev embedding theorem, $H^{1}\subset L^{4}.$
 From \eqref{re_prob}, \eqref{BDF_2} and the definition of the $L^{2}$ orthogonal projection $\Pi_{n}\phi(t_{n})$ in \eqref{project},  we derive the following error equation for $n\geq1$:
 \begin{subequations}\label{err}
 \begin{align}
 &\dps (F^{n+1}_{2}\overline{e}_{\phi},p_{N})+(\nabla\overline{e}^{n+1}_{\mu},\nabla p_{N})=(T_{1}^{n}, p_{N}),\label{err_1}\\
 &\dps(\overline{e}^{n+1}_{\mu}, q_{N})=\big((\Delta+\beta)\overline{e}^{n+1}_{\phi}, (\Delta+\beta)q_{N}\big)+S (\overline{e}^{n+1}_{\phi},q_{N})\nonumber\\
& \hspace{1.9cm}+\frac{e^{n+1}_{r}}{\sqrt{E^{n}_{1,N}+C_{0}}}\big(F'(\phi^{*,n+1}_{N}), q_{N}\big)+\dps (J^{n}_{1}+T^{n}_{2}, q_{N}),\label{err_2}\\
 &\dps e^{n+1}_{r}-e^{n}_{r}=\frac{1}{2\sqrt{E_{1}^{n}+C_{0}}}\int_{\Omega}F'(\phi^{*,n+1}_{N})(\overline{e}^{n+1}_{\phi}-\overline{e}^{n}_{\phi})d\x \nonumber\\
& \hspace{1.9cm}+\int_{\Omega}J^{n}_{2}\cdot
\Big(\Pi_{N}\big(\phi(t_{n+1})-\phi(t_{n})\big)\Big)d\x+J^{n}_{3}-v^{n}_{1}+v^{n}_{2},\label{err_3}
 \end{align}
 \end{subequations}
where %$J^{n}_{1}$ and $J^{n}_{2}$ denote
\bry
J^{n}_{1}:=& \dps r(t_{n+1})\Big[\frac{F'(\phi^{*,n+1}_{N})}{\sqrt{E^{n}_{1,N}+C_{0}}}-\frac{F'(\phi(t_{*,n+1}))}{\sqrt{E_{1}(\phi(t_{n+1}))+C_{0}}}\Big],\\[11pt]
J^{n}_{2}:=&\dps \frac{1}{2}\Big[ \frac{F'(\phi^{*,n+1}_{N})}{\sqrt{E^{n}_{1,N}+C_{0}}}-\frac{F'(\phi(t_{n+1}))}{\sqrt{E_{1}(\phi(t_{n+1}))+C_{0}}}\Big],\\
J^{n}_{3}:=&\dps\int_{\Omega}\frac{F'(\phi(t_{n+1}))}{2\sqrt{E_{1}(\phi(t_{n+1}))+C_{0}}}(\widehat{e}^{n+1}_{\phi}-\widehat{e}^{n}_{\phi})d\x,
\ery
and the truncation errors are defined by
 \begin{subequations}\label{err1}
 \begin{align}
 T^{n}_{1}&\dps=\phi_{t}(t_{n+1})-\partial_{t}(\Pi_{2,n}\phi(t))|_{t=t_{n+1}}, ~T^{n}_{2}\dps=F'(\phi(t_{*,n+1}))-F'(\phi(t_{n+1}))\label{tr_err1}\\
 %&\dps=\frac{1}{2\Dt_{n+1}}\Big[(1+\gamma_{n+1})\int_{t_{n+1}}^{t_{n}}(t_{n}-s)^{2}\phi_{ttt}(s)ds +\frac{\gamma^{2}_{n+1}}{1+\gamma_{n+1}}\int_{t_{n-1}}^{t_{n+1}}(t_{n-1}-s)^{2}\phi_{ttt}(s)ds\Big] \notag\\
%T^{n}_{2}&\dps=F'(\phi(t_{n+1}))-F'(\phi(t_{*,n+1})),\label{tr_err2}\\
v^{n}_{1}&\dps=r(t_{n+1})-r(t_{n})-\Dt_{n+1}r_{t}(t_{n+1})=\int_{t_{n}}^{t_{n+1}}(t_{n}-s)r_{tt}(s)ds,\label{tr_err3}\\
v^{n}_{2}&\dps=\Big(\frac{F'(\phi(t_{n+1}))}{2\sqrt{E_{1}(\phi(t_{n+1}))+C_{0}}},\int_{t_{n}}^{t_{n+1}}(t_{n}-s)\phi_{tt}(s)ds\Big).\label{tr_err4}
 \end{align}
 \end{subequations}
 Here, $\Pi_{2,n}\phi(t)$ denotes the quadratic interpolation polynomial associated with
$(t_{n-1},\phi(t_{n-1})),$ $(t_{n},\phi(t_{n}))$ and $(t_{n+1},\phi(t_{n+1}))$.
Taking $p_{N}=(-\Delta)^{-1}(\delta_{t}\overline{e}^{n+1}_{\phi})$ and $ q_{N}=\delta_{t}\overline{e}^{n+1}_{\phi}$ into \eqref{err_1} and \eqref{err_2}, respectively, and multiplying \eqref{err_3} with $2e^{n+1}_{r}$, we sum them up to derive
\bry
&\dps \big(\nabla^{-1}F^{n+1}_{2}\overline{e}_{\phi},\nabla^{-1}\delta_{t}\overline{e}^{n+1}_{\phi}\big)+\big((\Delta+\beta)\overline{e}^{n+1}_{\phi}, (\Delta+\beta)\delta_{t}\overline{e}^{n+1}_{\phi})+S (\overline{e}^{n+1}_{\phi},\delta_{t}\overline{e}^{n+1}_{\phi})\\[8pt]
&+2e^{n+1}_{r}(e^{n+1}_{r}-e^{n}_{r})\\[8pt]
=&\dps-(J^{n}_{1}+T^{n}_{2},\delta_{t}\overline{e}^{n+1}_{\phi})+\big(\nabla^{-1}T^{n}_{1},\nabla^{-1}\delta_{t}\overline{e}^{n+1}_{\phi}\big)\\
&\dps+2e^{n+1}_{r}\int_{\Omega}J^{n}_{2}\cdot \Pi_{N}
(\phi(t_{n+1})-\phi(t_{n}))d\x+2e^{n+1}_{r}J^{n}_{3}+2e^{n+1}_{r}(-v^{n}_{1}+v^{n}_{2}).
\ery
Using \eqref{ideq1},  the identities \eqref{Id}, and Young's inequality, we derive
\bry
&\dps\frac{1}{2}\Big[\Big(\frac{\gamma^{\frac{3}{2}}_{n+2}}{1+\gamma_{n+2}}+G(\gamma_{*},\gamma_{*})\Big)\frac{\|\nabla^{-1}\delta_{t}\overline{e}^{n+1}_{\phi}\|^{2}}{\Dt_{n+1}}-\frac{\gamma^{\frac{3}{2}}_{n+1}}{1+\gamma_{n+1}}\frac{\|\nabla^{-1}\delta_{t}\overline{e}^{n}_{\phi}\|^{2}}{\Dt_{n}}\Big]\\[9pt]
&\dps+\frac{1}{2}\big[\|(\Delta+\beta) \overline{e}^{n+1}_{\phi}\|^{2}-\|(\Delta+\beta) \overline{e}^{n}_{\phi}\|^{2}\big]+\frac{S}{2}\big[\|\overline{e}^{n+1}_{\phi}\|^{2}-\|\overline{e}^{n}_{\phi}\|^{2}\big]+|e^{n+1}_{r}|^{2}-|e^{n}_{r}|^{2}\\[11pt]
\leq&\dps C(\gamma_{*})\Dt_{n+1}\big[\|\nabla J^{n}_{1}\|^{2}+\|\nabla^{-1}T^{n}_{1}\|^{2}+\|\nabla T^{n}_{2}\|^{2}\big]+G(\gamma_{*},\gamma_{*})\frac{\|\nabla^{-1}\delta_{t}\overline{e}^{n+1}_{\phi}\|^{2}}{2\Dt_{n+1}}\\[11pt]
&\dps+C\Dt_{n+1}\|\phi_{t}\|_{L^{\infty}(0,T;L^{2})}\big[|e^{n+1}_{r}|^{2}+\|J^{n}_{2}\|^{2}\big]
+\frac{C\Dt_{n+1}}{\sqrt{C_{0}}}\big[|e^{n+1}_{r}|^{2}\\
&\dps+N^{-2m}\|\phi_{t}\|^{2}_{L^{\infty}(0,T;H^{m})}\big]+\Dt_{n+1}|e^{n+1}_{r}|^{2}+\frac{2}{\Dt_{n+1}}(|v^{n}_{1}|^{2}+|v^{n}_{2}|^{2}),
\ery
where we have used
\bry
&\dps 2e^{n+1}_{r}\cdot J^{n}_{3}\\[7pt]
\leq&\dps\frac{\|F'(\phi(t_{n+1}))\|_{L^{\infty}}}{\sqrt{E_{1}(\phi(t_{n+1}))+C_{0}}}\int_{\Omega}e^{n+1}_{r}(\widehat{e}^{n+1}_{\phi}-\widehat{e}^{n}_{\phi})d\x\\[9pt]
\leq&\dps\frac{C}{\sqrt{C_{0}}}\Big[\Dt_{n+1}|e^{n+1}_{r}|^{2}+\frac{1}{\Dt_{n+1}}\int_{\Omega}|\widehat{e}^{n+1}_{\phi}-\widehat{e}^{n}_{\phi}|^{2}d\x\Big]\\[9pt]
=&\dps\frac{C}{\sqrt{C_{0}}}\Big[\Dt_{n+1}|e^{n+1}_{r}|^{2}+\frac{1}{\Dt_{n+1}}\int_{\Omega}\big|\Pi_{N}\big(\phi(t_{n+1})-\phi(t_{n})\big)-\big(\phi(t_{n+1}-\phi(t_{n}))\big)\big|^{2}d\x\Big]\\[9pt]
\leq&\dps\frac{C}{\sqrt{C_{0}}}\Big[\Dt_{n+1}|e^{n+1}_{r}|^{2}+\Dt_{n+1}N^{-2m}\|\phi_{t}\|^{2}_{L^{\infty}(0,T;H^{m})}\Big].
\ery
Combining with \eqref{sm_cd}, we have following error inequality
\brr\label{equ2}
&\dps\frac{1}{2}\Big[\frac{\gamma^{\frac{3}{2}}_{n+2}}{1+\gamma_{n+2}}\frac{\|\nabla^{-1}\delta_{t}\overline{e}^{n+1}_{\phi}\|^{2}}{\Dt_{n+1}}
-\frac{\gamma^{\frac{3}{2}}_{n+1}}{1+\gamma_{n+1}}\frac{\|\nabla^{-1}\delta_{t}\overline{e}^{n}_{\phi}\|^{2}}{\Dt_{n}}\Big]+\frac{1}{2}\big[\|(\Delta+\beta) \overline{e}^{n+1}_{\phi}\|^{2}\\[9pt]
&\dps-\|(\Delta+\beta) \overline{e}^{n}_{\phi}\|^{2}\big]+\frac{S}{2}\big[\|\overline{e}^{n+1}_{\phi}\|^{2}-\|\overline{e}^{n}_{\phi}\|^{2}\big]+|e^{n+1}_{r}|^{2}-|e^{n}_{r}|^{2}\\[11pt]
\lesssim&\dps \Dt_{n+1}\Big[\|\nabla J^{n}_{1}\|^{2}+\|J^{n}_{2}\|^{2}+|e^{n+1}_{r}|^{2}+\|\nabla^{-1}T^{n}_{1}\|^{2}+\|\nabla T^{n}_{2}\|^{2}\\
&\dps+N^{-2m}\|\phi_{t}\|^{2}_{L^{\infty}(0,T;H^{m})}\Big]+\frac{2(|v^{n}_{1}|^{2}+|v^{n}_{2}|^{2})}{\Dt_{n+1}}.
\err
Moreover, we have
\bq\label{equat1}
\dps \nabla J^{n}_{1}
=-r(t_{n+1})\Big[\frac{\nabla I^{n}_{1}}{\sqrt{E^{n}_{1,N}+C_{0}}}+\nabla F'(\phi(t_{*,n+1}))\cdot I^{n}_{2}\Big],
%&\hspace{2cm}\dps+F'(\phi(t_{*,n+1}))\frac{E_{1}(\phi(t_{n+1}))-E^{n}_{1,N}}{\sqrt{E^{n}_{1,N}+C_{0}}\sqrt{E_{1}(\phi(t_{n+1}))+C_{0}}(\sqrt{E_{1}(\phi(t_{n+1}))+C_{0}}+\sqrt{E^{n}_{1,N}+C_{0}})}
\eq
where
\bq\label{equ6}
I^{n}_{1}:= F'(\phi^{*,n+1}_{N})- F'(\phi(t_{*,n+1})),\ \ I^{n}_{2}:=\frac{1}{\sqrt{E^{n}_{1,N}+C_{0}}}-\frac{1}{\sqrt{E_{1}(\phi(t_{n+1}))+C_{0}}}.
\eq
From \eqref{equ1} and $\phi(t_{*,n+1})\in H^{2}$, it follows that
\bry
\|\nabla F'(\phi(t_{*,n+1}))\|=&\dps\|F^{''}(\phi(t_{*,n+1}))\nabla\phi(t_{*,n+1})\|\\[7pt]
\leq&\dps\|F^{''}(\phi(t_{*,n+1}))\|_{L^{\infty}}\|\nabla\phi(t_{*,n+1})\|\leq C.
\ery
Therefore, together with \eqref{SAV} and \eqref{eq_c}, we deduce that
\bq\label{equ3}
\|\nabla J^{n}_{1}\|^{2}\lesssim[\|\nabla I^{n}_{1}\|^{2}+C_{0}|I^{n}_{2}|^{2}].
\eq
For the term $J^{n}_{2}$, using triangle inequality and \eqref{eq_c}, we have
\brr\label{equ4}
\|J^{n}_{2}\|^{2}=&\dps\frac{1}{2}\Big\|\frac{I^{n}_{1}}{\sqrt{E^{n}_{1,N}+C_{0}}}
+\frac{T^{n}_{2}}{\sqrt{E^{n}_{1,N}+C_{0}}}+F'(\phi(t_{n+1}))\cdot I^{n}_{2}\Big\|^{2}\\[11pt]
\lesssim &\dps \frac{\|I^{n}_{1}\|^{2}}{C_{0}}+\frac{\|T^{n}_{2}\|^{2}}{C_{0}}+|I^{n}_{2}|^{2}.\\[9pt]
\err
Then, we substitute the estimates \eqref{equ3} and \eqref{equ4} into the error inequality \eqref{equ2} to get
\brr\label{equ5}
&\dps\frac{\gamma^{\frac{3}{2}}_{n+2}}{1+\gamma_{n+2}}\frac{\|\nabla^{-1}\delta_{t}\overline{e}^{n+1}_{\phi}\|^{2}}{2\Dt_{n+1}}
-\frac{\gamma^{\frac{3}{2}}_{n+1}}{1+\gamma_{n+1}}\frac{\|\nabla^{-1}\delta_{t}\overline{e}^{n}_{\phi}\|^{2}}{2\Dt_{n}}+\frac{1}{2}\big[\|(\Delta+\beta)\overline{e}^{n+1}_{\phi}\|^{2}\\[9pt]
&\dps-\|(\Delta+\beta) \overline{e}^{n}_{\phi}\|^{2}\big]+\frac{S}{2}\big[\|\overline{e}^{n+1}_{\phi}\|^{2}-\|\overline{e}^{n}_{\phi}\|^{2}\big]+|e^{n+1}_{r}|^{2}-|e^{n}_{r}|^{2}\\[9pt]
\lesssim&\dps \Dt_{n+1}\Big[|e^{n+1}_{r}|^{2}+\frac{1}{C_{0}}\| I^{n}_{1}\|^{2}+\|\nabla I^{n}_{1}\|^{2}+C_{0}|I^{n}_{2}|^{2}
+\|\nabla^{-1}T^{n}_{1}\|^{2}+\frac{1}{C_{0}}\|T^{n}_{2}\|^{2}\\
&\dps+\|\nabla T^{n}_{2}\|^{2}+N^{-2m}\|\phi_{t}\|^{2}_{L^{\infty}(0,T;H^{m})}\Big]+\frac{2}{\Dt_{n+1}}(|v^{n}_{1}|^{2}+|v^{n}_{2}|^{2}).
\err
Moreover, from \eqref{equ6}, \eqref{equ1}, \eqref{proj_err} and \eqref{eq_c},
\brr\label{equ7}
\|I^{n}_{1}\|^{2}=&\dps \|F'(\phi^{*,n+1}_{N})-F'(\phi(t_{*,n+1}))\|^{2}\lesssim \|e^{n}_{\phi}\|^{2}+\|e^{n-1}_{\phi}\|^{2},\\[8pt] \lesssim&\|\overline{e}^{n}_{\phi}+\widehat{e}^{n}_{\phi}\|^{2}+\|\overline{e}^{n-1}_{\phi}+\widehat{e}^{n-1}_{\phi}\|^{2}\\[8pt]
\lesssim&\dps \|\overline{e}^{n}_{\phi}\|^{2}+\|\overline{e}^{n-1}_{\phi}\|^{2}+N^{-2m}\|\phi\|_{L^{\infty}(0,T;H^{m})},\\[8pt]
|I^{n}_{2}|^{2}=&\dps\Big|\frac{E_{1}(\phi(t_{n+1}))-E^{n}_{1,N}}{\mathcal{G}(C_{0})}\Big|^{2}
\lesssim\dps \frac{1}{C^{3}_{0}}\big|E_{1}(\phi(t_{n+1}))-E^{n}_{1,N}\big|^{2}\\[8pt]
\lesssim&\dps  \frac{1}{C^{3}_{0}}\Big(\big|E_{1}(\phi(t_{n+1}))-E_{1}(\phi(t_{n}))\big|^{2}+\big|E_{1}(\phi(t_{n}))-E^{n}_{1,N}\big|^{2}\Big)\\[8pt]
\lesssim&\dps  \frac{1}{C^{3}_{0}}\Big(\Dt_{n+1}\int_{t_{n}}^{t_{n+1}}\|\phi_{t}(s)\|^{2}ds+\|e^{n}_{\phi}\|^{2}\Big)\\[8pt]
\lesssim&\dps  \frac{1}{C^{3}_{0}}\Big(\Dt_{n+1}\int_{t_{n}}^{t_{n+1}}\|\phi_{t}(s)\|^{2}ds+\|\overline{e}^{n}_{\phi}\|^{2}+N^{-2m}\|\phi\|_{L^{\infty}(0,T;H^{m})}\Big),
\err
where the $\mathcal{G}(C_{0})$ is defined by $$\mathcal{G}(C_{0}):=\sqrt{E^{n}_{1,N}+C_{0}}\sqrt{E_{1}(\phi(t_{n+1}))+C_{0}}\Big(\sqrt{E_{1}(\phi(t_{n+1}))+C_{0}}+\sqrt{E^{n}_{1,N}+C_{0}}\Big).$$
From the definition of $I^{n}_{1}$ in \eqref{equ6}, we have
\bq\label{est_gard_I}
\|\nabla I^{n}_{1}\|^{2}=\dps \|I^{n}_{1,1}+I^{n}_{1,2}\|^{2}\leq\|I^{n}_{1,1}\|^{2}+\|I^{n}_{1,2}\|^{2},
\eq
where $I^{n}_{1,1}$ and $I^{n}_{1,2}$ are defined by
\bry
&\dps I^{n}_{1,1}:=F''(\phi^{*,n+1}_{N})\nabla\big(\phi^{*,n+1}_{N}-\phi(t_{*,n+1})\big), \\[7pt]
&\dps I^{n}_{1,2}:=\big(F''(\phi^{*,n+1}_{N})-F''(\phi(t_{*,n+1}))\big)\nabla\phi(t_{*,n+1}).
\ery
Furthermore, using \eqref{equ1}, the H\"{o}lder's inequality and the Sobolev embedding theorem, $H^{1}\subset L^{6}$, we have
\brr\label{est_gard_I1}
\|I^{n}_{1,1}\|^{2}\lesssim&\dps \|\nabla e^{n}_{\phi}\|^{2}+\|\nabla e^{n-1}_{\phi}\|^{2}\\[7pt]
\|I^{n}_{1,2}\|^{2}\lesssim&\|(\phi^{*,n+1}_{N}-\phi(t_{*,n+1}))\nabla\phi(t_{*,n+1})\|^{2}\\[7pt]
\lesssim&\|\phi^{*,n+1}_{N}-\phi(t_{*,n+1})\|^{2}_{L^{6}}\|\nabla\phi(t_{*,n+1})\|^{2}_{L^{3}}\\[7pt]
\lesssim&\|\phi^{*,n+1}_{N}-\phi(t_{*,n+1})\|^{2}_{H^{1}}\|\phi(t_{*,n+1})\|^{2}_{H^{2}}\\[7pt]
\lesssim&\|e^{n}_{\phi}\|^{2}_{H^{1}}+\|e^{n-1}_{\phi}\|^{2}_{H^{1}}.
\err
Combining \eqref{est_gard_I} and \eqref{est_gard_I1} gives
\brr\label{est_I}
\|\nabla I^{n}_{1}\|^{2}
\lesssim&\dps \|\nabla e^{n}_{\phi}\|^{2}+\|\nabla e^{n-1}_{\phi}\|^{2}+\|e^{n}_{\phi}\|^{2}+\|e^{n-1}_{\phi}\|^{2}\\[7pt]
\lesssim&\dps \|\Delta \overline{e}^{n}_{\phi}\|^{2}+\|\Delta \overline{e}^{n-1}\|^{2}+\|\overline{e}^{n}_{\phi}\|^{2}+\|\overline{e}^{n-1}_{\phi}\|^{2}+N^{4-2m}\|\phi\|^{2}_{L^{\infty}(0,T;H^{m})},\\[7pt]
\lesssim&\dps \|(\Delta+\beta) \overline{e}^{n}_{\phi}\|^{2}+\|(\Delta+\beta) \overline{e}^{n-1}_{\phi}\|^{2}+\|\overline{e}^{n}_{\phi}\|^{2}+\|\overline{e}^{n-1}_{\phi}\|^{2}+N^{4-2m}\|\phi\|^{2}_{L^{\infty}(0,T;H^{m})}
\err
in which we have used
\bry
&\dps\|\nabla e^{n+1}_{\phi}\|^{2}=(-\Delta e^{n+1}_{\phi},e^{n+1}_{\phi})\leq \frac{\|\Delta e^{n+1}_{\phi}\|^{2}+\|e^{n+1}_{\phi}\|^{2}}{2},\\[7pt]
&\dps \|\Delta e^{n+1}_{\phi}\|^{2}=\|\Delta \overline{e}^{n+1}_{\phi}+\Delta\widehat{e}^{n+1}_{\phi}\|^{2}\leq\|\Delta \overline{e}^{n+1}_{\phi}\|^{2}+ CN^{4-2m}\|\phi\|^{2}_{L^{\infty}(0,T;H^{m})}.
\ery
For the truncation errors of $T^{n}_{1}, T^{n}_{2}, v^{n}_{1}$, and $v^{n}_{2}$, the following estimates hold, seeing also \cite{CWYZ19,SX18},
\brr\label{equ8}
\|\nabla^{-1}T^{n}_{1}\|^{2}\lesssim &\dps (\Dt_{n}+\Dt_{n+1})^{3}\int_{t_{n-1}}^{t_{n+1}}\|\phi_{ttt}(s)\|^{2}_{H^{-1}}ds,\\
\|T^{n}_{2}\|^{2}\lesssim&\dps \|\phi(t_{n+1})-\phi(t_{*,n+1})\|^{2}\lesssim(\Dt_{n}+\Dt_{n+1})^{3}\int_{t_{n-1}}^{t_{n+1}}\|\phi_{tt}(s)\|^{2}ds,\\
\|\nabla T^{n}_{2}\|^{2}\lesssim&\dps \|\phi(t_{n+1})-\phi(t_{*,n+1})\|^{2}+\|\nabla(\phi(t_{n+1})-\phi(t_{*,n+1}))\|^{2}\\[9pt]
\lesssim&\dps (\Dt_{n}+\Dt_{n+1})^{3}\int_{t_{n-1}}^{t_{n+1}}\|\phi_{tt}(s)\|^{2}_{H^{1}}ds,\\
|v^{n}_{1}|^{2}\leq&\dps \frac{1}{3} \Dt^{3}_{n+1}\int_{t_{n}}^{t_{n+1}}|r_{tt}(s)|^{2}ds\lesssim\frac{\Dt^{3}_{n+1}}{C_0}\int_{t_{n}}^{t_{n+1}}(\|\phi_{t}\|^{2}_{H^{1}}+\|\phi_{tt}\|^{2})dt,\\[11pt]
|v^{n}_{2}|^{2}\lesssim&\dps \Dt_{n+1}^{3}\Big\|\frac{F'(\phi(t_{n+1}))}{2\sqrt{E_{1}(\phi(t_{n+1}))+C_{0}}}\Big\|^{2}\int_{t_{n}}^{t_{n+1}}\|\phi_{tt}(s)\|^{2}ds\\[11pt]
\lesssim&\dps \frac{\Dt^{3}_{n+1}}{C_{0}}\int_{t_{n}}^{t_{n+1}}\|\phi_{tt}(s)\|^{2}ds,
\err
where the estimate for $r_{tt}$ in \eqref{equa1} is used.
Then we take the above estimates \eqref{equ7}-\eqref{equ8} into \eqref{equ5},
and sum up it from 1 to $n$ to get
\brr\label{equ9}
&\dps\frac{\gamma^{\frac{3}{2}}_{n+2}}{2(1+\gamma_{n+2})}\frac{\|\nabla^{-1}\delta_{t}\overline{e}^{n+1}_{\phi}\|^{2}}{\Dt_{n+1}}
+\frac{1}{2}\|(\Delta+\beta) \overline{e}^{n+1}_{\phi}\|^{2}+\frac{S}{2}\|\overline{e}^{n+1}_{\phi}\|^{2}+|e^{n+1}_{r}|^{2}\\[11pt]
\lesssim&\dps\frac{\gamma^{\frac{3}{2}}_{2}}{2(1+\gamma_{2})}\frac{\|\nabla^{-1}\overline{e}^{1}_{\phi}\|^{2}}{\Dt_{1}}+\frac{1}{2}\|(\Delta+\beta) \overline{e}^{1}_{\phi}\|^{2} +\frac{S}{2}\|\overline{e}^{1}_{\phi}\|^{2}+|e^{1}_{r}|^{2}\\[8pt]
&\dps+\sum_{k=1}^{n}\Dt_{k+1}\big[|e^{n+1}_{r}|^{2}
+\|(\Delta+\beta)\overline{e}^{n}_{\phi}\|^{2}+\|(\Delta+\beta)\overline{e}^{n-1}_{\phi}\|^{2}+\|\overline{e}^{n}_{\phi}\|^{2}\\[11pt]
&\dps+\|\overline{e}^{n-1}_{\phi}\|^{2}\big]
+t_{n+1}\big[N^{-2m}\|\phi_{t}\|^{2}_{L^{\infty}(0,T;H^{m})}+N^{4-2m}\|\phi\|_{L^{\infty}(0,T;H^{m})}\big]\\[8pt]
&\dps+\frac{\Dt^{2}}{C_{0}}\int_{t_{1}}^{t_{n+1}}\big(\|\phi_{t}(s)\|^{2}_{H^{1}}+\|\phi_{tt}(s)\|^{2}\big)ds
+\Dt^{4}\int_{0}^{t_{n+1}}\big(\|\phi_{tt}(s)\|^{2}_{H^{1}}+\|\phi_{ttt}(s)\|^{2}_{H^{-1}}\big)ds.
\err
We use the first order scheme \eqref{eq1} for the initial time step to get the following error equation for $n=0$:
 \bry
  &\dps(\frac{\overline{e}^{1}_{\phi}}{\Dt_{1}},p_{N})+(\nabla \overline{e}^{1}_{\mu},\nabla p_{N})=-(\Dt_{1}^{-1}\int_{0}^{t_{1}}s\phi_{tt}(s)ds,p_N),\\
  &\dps(\overline{e}^{1}_{\mu}, q_{N})=\big((\Delta+\beta)\overline{e}^{1}_{\phi}, (\Delta+\beta)q_{N}\big)+S (\overline{e}^{1}_{\phi}, q_{N})+\frac{e^{1}_{r}}{\sqrt{E^{0}_{1,N}+C_{0}}}(F'(\phi^{0}_{N}), q_{N})\\[11pt]
  &\hspace{1.1cm}\dps+\Big(\frac{r(t_{1})F'(\phi^{0}_{N})}{\sqrt{E^{0}_{1,N}+C_{0}}}-\frac{r(t_{1})F'(\phi_{0}(\x))}{\sqrt{E_{1}(\phi(t_{1}))+C_{0}}}, q_{N}\Big) -\Big(F'(\phi_{0}(\x))-F'(\phi(t_{1})),q_{N}\Big),\\[13pt]
 &\dps e^{1}_{r}=\frac{\int_{\Omega}F'(\phi^{0}_{N})\overline{e}^{1}_{\phi}d\x}{2\sqrt{E^{0}_{1,N}+C_{0}}}
 +\frac{1}{2}\int_{\Omega}\Big[\frac{F'(\phi^{0}_{N})}{\sqrt{E^{0}_{1,N}+C_{0}}}-\frac{F'(\phi(t_{1}))}{\sqrt{{\color{black}E_{1}(\phi(t_{1}))+C_{0}}}}\Big]
\Pi_{N}\big(\phi(t_{1})-\phi(t_{0})\big)d\x\\[13pt]
&\hspace{1cm}\dps+\int_{\Omega}\frac{F'(\phi(t_{1}))}{\sqrt{{\color{black}E_{1}(\phi(t_{1}))+C_{0}}}}
(\widehat{e}^{1}_{\phi}-\widehat{e}^{0}_{\phi})d\x-v^{0}_{1}+v^{0}_{2},
\ery
where $\phi^{0}_{N}:=\Pi_{N}\phi_{0}(\x).$
Similar to the argument for the case $n\geq1$, the following estimate can be easily derived with the assumption $\Dt_{1}\leq C \Dt^{4/3}$
\brr\label{equa2}
 \dps\frac{\|\nabla^{-1}\overline{e}^{1}_{\phi}\|^{2}}{2\Dt_{1}}+\frac{1}{2}\|(\Delta+\beta)\overline{ e}^{1}_{\phi}\|^{2}+\frac{S}{2} \|\overline{e}^{1}_{\phi}\|^{2}+|e^{1}_{r}|^{2}
 \lesssim&\dps \Dt_{1}^{3}+\Dt_{1}N^{4-2m}\|\phi_{0}(\x)\|_{H^{m}}\\[8pt]
 \lesssim&\dps \Dt^{4}+\Dt_{1}N^{4-2m}\|\phi\|_{L^{\infty}(0,T;H^{m})}.
\err
Combining with \eqref{equ9},\eqref{equa2} and \eqref{proj_err}, together with the discrete Gronwall's lemma and the triangle inequality, the desired estimate \eqref{equa3} is obtained.

Next, we will prove the estimate \eqref{equa4}, which can show that the numerical solution $\phi^{n}$ is of second-order accuracy in time with $C_{0}\geq {1}/{\Dt}$. The error equation for $n\geq1$ reads
\bry
&(F^{n+1}_{2}\overline{e}_{\phi},p_{N})+(\nabla(\Delta+\beta)\overline{e}^{n+1}_{\phi}, \nabla(\Delta+\beta)p_{N})+S (\nabla\overline{e}^{n+1}_{\phi},\nabla p_{N})\\[9pt]
=&\dps\big(1-\frac{r^{n+1}}{\sqrt{E^{n}_{1,N}+C_{0}}}\big)(\nabla F'(\phi^{*,n+1}_{N}), \nabla p_{N})-\big(\nabla (I^{n}_{1}+T^{n}_{2}),\nabla p_{N}\big)-(T^{n}_{1}, p_{N}).
\ery
Taking  $p_{N}=(-\Delta)^{-1}(\delta_{t}\overline{e}^{n+1}_{\phi})$, and using the Young's inequality, we have
\brr\label{esti_phi}
&\dps\frac{1}{2}\Big[\big(\frac{\gamma^{\frac{3}{2}}_{n+2}}{1+\gamma_{n+2}}+G(\gamma_{*},\gamma_{*})\big)\frac{\|\nabla^{-1}\delta_{t}\overline{e}^{n+1}_{\phi}\|^{2}}{\Dt_{n+1}}-\frac{\gamma^{\frac{3}{2}}_{n+1}}{1+\gamma_{n+1}}\frac{\|\nabla^{-1}\delta_{t}\overline{e}^{n}_{\phi}\|^{2}}{\Dt_{n}}\Big]\\[9pt]
&\dps+\frac{1}{2}\big[\|(\Delta+\beta)\overline{e}^{n+1}_{\phi}\|^{2}-\|(\Delta+\beta) \overline{e}^{n}_{\phi}\|^{2}\big]+\frac{S}{2}\big[\|\overline{e}^{n+1}_{\phi}\|^{2}-\|\overline{e}^{n}_{\phi}\|^{2}\big]\\[11pt]
\leq&\dps C(\gamma_{*})\Dt_{n+1}\Big[\Big|1-\frac{r^{n+1}}{\sqrt{E^{n}_{1,N}+C_{0}}}\Big|^{2}+\|\nabla I^{n}_{1}\|^{2}+\|\nabla^{-1}T^{n}_{1}\|^{2}+\|\nabla T^{n}_{2}\|^{2}\Big]\\
&\dps+G(\gamma_{*},\gamma_{*})\frac{\|\nabla^{-1}\delta_{t}\overline{e}^{n+1}_{\phi}\|^{2}}{2\Dt_{n+1}}.
\err
Combined the estimate \eqref{equ7} for $I^{n}_{2}$ with \eqref{equa3}, it gives
\brr\label{equa5}
&\dps\Big|1-\frac{r^{n+1}}{\sqrt{E^{n}_{1,N}+C_{0}}}\Big|^{2}\\
=&\dps\Big|\frac{r^{n+1}}{\sqrt{E^{n}_{1,N}+C_{0}}}-\frac{r(t_{n+1})}{\sqrt{E_{1}(\phi(t_{n+1}))+C_{0}}}\Big|^{2}= \Big|\frac{e^{n+1}_{r}}{\sqrt{E^{n}_{1,N}+C_{0}}}+r(t_{n+1})\cdot I^{n}_{2}\Big|^{2}\\[11pt]
\lesssim&\dps \frac{|e^{n+1}_{r}|^{2}}{C_{0}}+ \frac{1}{C^{2}_{0}}\Big(\Dt_{n+1}\int_{t_{n}}^{t_{n+1}}\|\phi_{t}(s)\|^{2}ds+\|\overline{e}^{n}_{\phi}\|^{2}+N^{-2m}\|\phi\|_{L^{\infty}(0,T;H^{m})}\Big)\\
\lesssim&\dps \frac{\Dt^{2}}{C^{2}_{0}}\int_{0}^{T}\big(\|\phi_{t}(s)\|^{2}_{H^{1}}+\|\phi_{tt}(s)\|^{2}\big)ds+\frac{\Dt^{4}}{C_{0}}\int_{0}^{T}\big(\|\phi_{tt}(s)\|^{2}_{H^{1}}+\|\phi_{ttt}(s)\|^{2}_{H^{-1}}\big)ds \\
&\dps+\frac{1}{C^{2}_{0}}\Big(\Dt_{n+1}\int_{t_{n}}^{t_{n+1}}\|\phi_{t}(s)\|^{2}ds+\|\overline{e}^{n}_{\phi}\|^{2}+N^{-2m}\|\phi\|_{L^{\infty}(0,T;H^{m})}\Big).
\err
Consequently, taking \eqref{equa5}, the estimates of $\|I^{n}_{1}\|^{2}$, $\|T^{n}_{1}\|^{2}$ and $\|T^{n}_{2}\|^{2}$ into \eqref{esti_phi}, summing it up from 1 to n, and applying the discrete Gronwall's lemma, we can deduce the desired estimate \eqref{equa4} and \eqref{equa4_1}. Then the proof is completed.
\end{proof}

\begin{remark}
%As we know, it is enough for the second order scheme to start with a first order scheme at the initial time step. However,
Our error norms contain the terms $\|\nabla^{-1}e^{1}_{\phi}\|^{2}/\Dt_{1}$ for $n=0$ in \eqref{equa2} and $\|\nabla^{-1}\delta_{t}\overline{e}^{n+1}_{\phi}\|/\Dt_{n+1}$ for $n\geq1$ in \eqref{equa4_1}. Thus, in order to guarantee the second order accuracy of the fully discrete  scheme \eqref{BDF2_2} with this type of error norm in \eqref{equa4_1}, it is necessary to employ the first order scheme \eqref{eq1} with $\Dt_{1}\leq \Dt^{4/3}$ to start the time marching. {\color{black}It is worth noting that the constant $C \exp(T)$ in the error estimates (3.5)-(3.7) would be rather large, which may be not optimal estimates particularly for the long-time simulation case. This exponential constant is commonly appeared in the error analysis for the time-stepping schemes using the discrete Gronwall's inequality as an analysis tool.
To avoid using the discrete Gronwall's inequality, another possible way to establish the corresponding error estimates is to adopt the mathematical induction method. However, it is not easy to give a detailed uniform bound for the general positive constant $C$ such as $C\leq 1$ in the derivation of the error estimate, which is a key issue in successfully deriving the error estimates by the mathematical induction approach. It is also an open, interesting problem now and needs more further effort.}
\end{remark}

\section{Numerical results}\label{sec:sect4}
\setcounter{equation}{0}
This section is devoted to numerically validating the theoretical results in terms of stability and accuracy. For simplicity, we set $\beta=1, S=\varepsilon,$  and the errors are computed by $L^{\infty}$-norm throughout the numerical tests.

\subsection{Test of the convergence order}
In this subsection, we investigate the accuracy of second order in time and spectral convergence in space for the fully discrete  scheme \eqref{BDF2_2}. Here, we consider the PFC model with $\varepsilon=0.025$ and the initial data, given by
\beq
\phi_{0}(x,y)=\sin\Big(\frac{\pi x}{16}\Big)\cos\Big(\frac{\pi y}{16}\Big).
\eeq
The computational domain is set to be $(0,32)\times(0,32)$.
%A $256\times256$ space mesh is used, such that the spacial error can be negligible comparing to the temporal error.
At $T=1$, a reference solution of the phase function $\phi^n$ is computed by the fully discrete scheme \eqref{BDF2_2} with a fixed small uniform time step size $\Delta t=1e-5$ and a $256\times256$ space mesh.
%To validate the temporal accuracy, we calculate the numerical solutions of the phase function $\phi$ at $T=1$ with different uniform time steps and
We first check the temporal accuracy on the uniform temporal mesh.
In Figure \ref{fig1} (a), the temporal errors of the numerical solutions of the phase function $\phi^n$ with different time step sizes are plotted in log-log scale.
It is shown that the scheme \eqref{BDF2_2} with $C_{0}=1/\Dt$ achieves the expected second order accuracy in time.
We also display the second order approximation of the computed $r^{n+1}/\sqrt{E^{n}_{1,N}+C_{0}}$ to $1$ in Figure \ref{fig1} (b).
 Then we study the accuracy of the spatial discretization.
 In Figure \ref{fig1} (c), we plot the $L^{\infty}$-errors in semi-log scale with respect to the Fourier mode number $N$.
 The error curve is almost a straight line in the semi-log scale, which indicates that the numerical solutions
are exponentially convergent with respect to $N$. This is consistent with our theoretical results.

Next, we investigate the temporal error behaviors of the fully discrete  scheme \eqref{BDF2_2} on the nonuniform temporal mesh to validate our theoretical error estimates in Theorem \ref{th1}.
{\color{black}The nonuniform temporal mesh $\{\widehat{t}_{n}\}_{n=0}^{N}$ used here is produced  by $40\%$ random  perturbation of the uniform mesh $\{t_{n}=n\tau\}_{n=0}^{N}$, i.e., the random time grids are given by
 $$\widehat{t}_{n}:=t_{n}+0.4\tau\cdot rand(\cdot),\quad n=0,1,\cdots,N,$$
 where $rand(\cdot)$ is a random data from -1 to 1.
The convergence order is computed by
\beq
\mbox{Order}=\frac{\log_{10}(\|e^{M}_{\phi}\|_{\infty}/\|e^{2M}_{\phi}\|_{\infty})}{\log_{10}(\tau(M)/\tau(2M))},
\eeq
where $e^{M}_{\phi}$ and $\tau(M)$ denote the $L^{\infty}$-norm error of $\phi$ at the finial time T and the maximum time-step size for a total of M subintervals on $[0,T]$, respectively.}
In Table \ref{table1}, we present convergence rates in time for the phase function $\phi^n$ in \eqref{BDF2_2} at $T=1$ with $\sigma=$ 1/2,  2/3 and 1.
It is observed the proposed  scheme \eqref{BDF2_2} achieves {\color{black} about} second order accuracy for the phase function $\phi^n$ for all tested $\sigma$ on the nonuniform mesh, even for the cases with the adjacent time step radio $\gamma_{n}>4.8645$.
Thus it indicates that $\gamma_{**}=4.8645$ may be not the optimal upper bound on the adjacent time step ratios, and deserves further investigation.
{\color{black}In addition, compared to the numerical convergence results for the case of the uniform temporal meshes in Figure \ref{fig1}, the convergence order of the proposed scheme shown in Table \ref{table1} achieves obvious oscillations due to the randomness of the random nonuniform meshes used.}

\begin{figure*}[htbp]
\begin{minipage}[t]{0.32\linewidth}
\centerline{\includegraphics[scale=0.28]{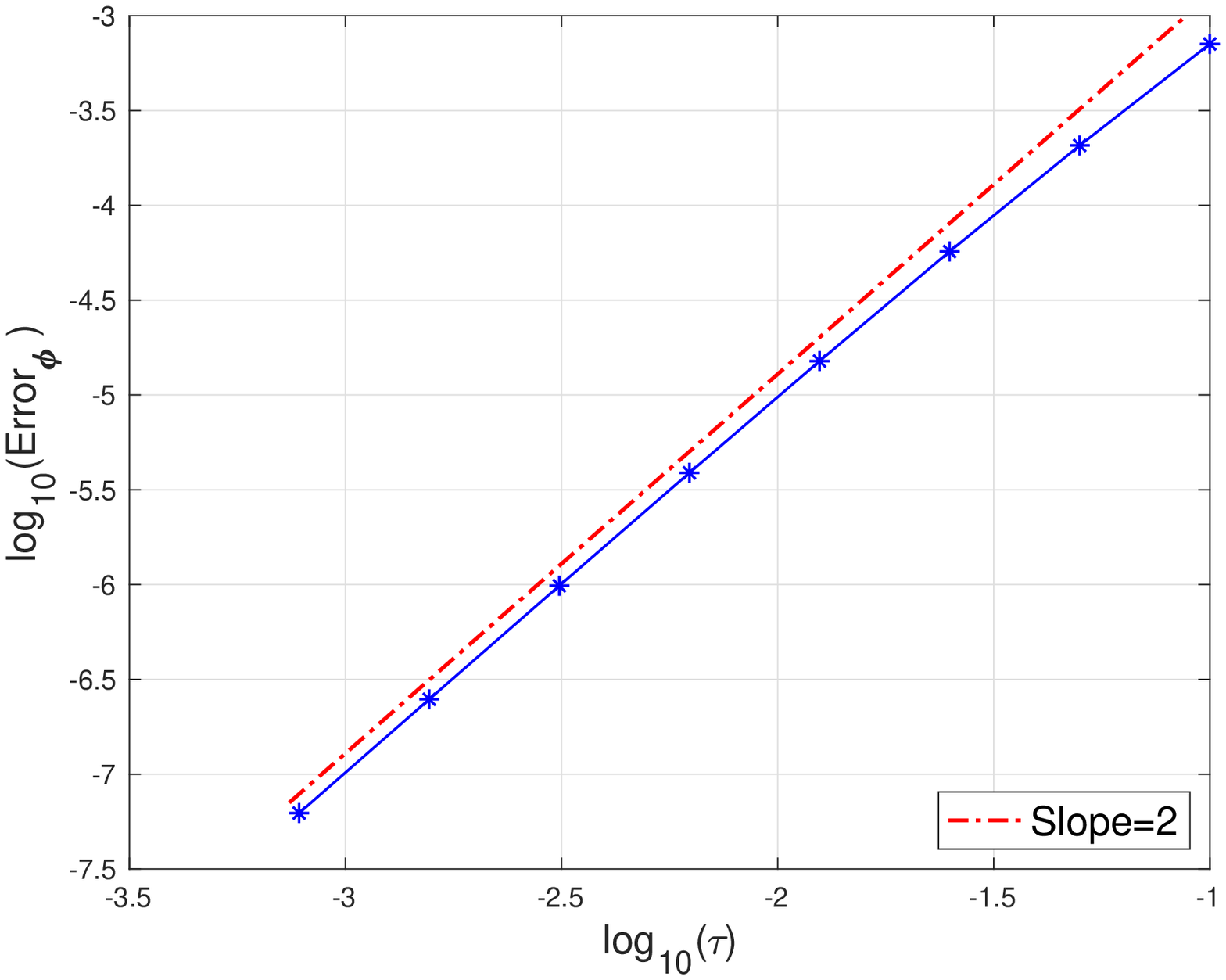}}
\centerline{\tiny (a) Convergence rates in time }
\end{minipage}
\begin{minipage}[t]{0.32\linewidth}
\centerline{\includegraphics[scale=0.28]{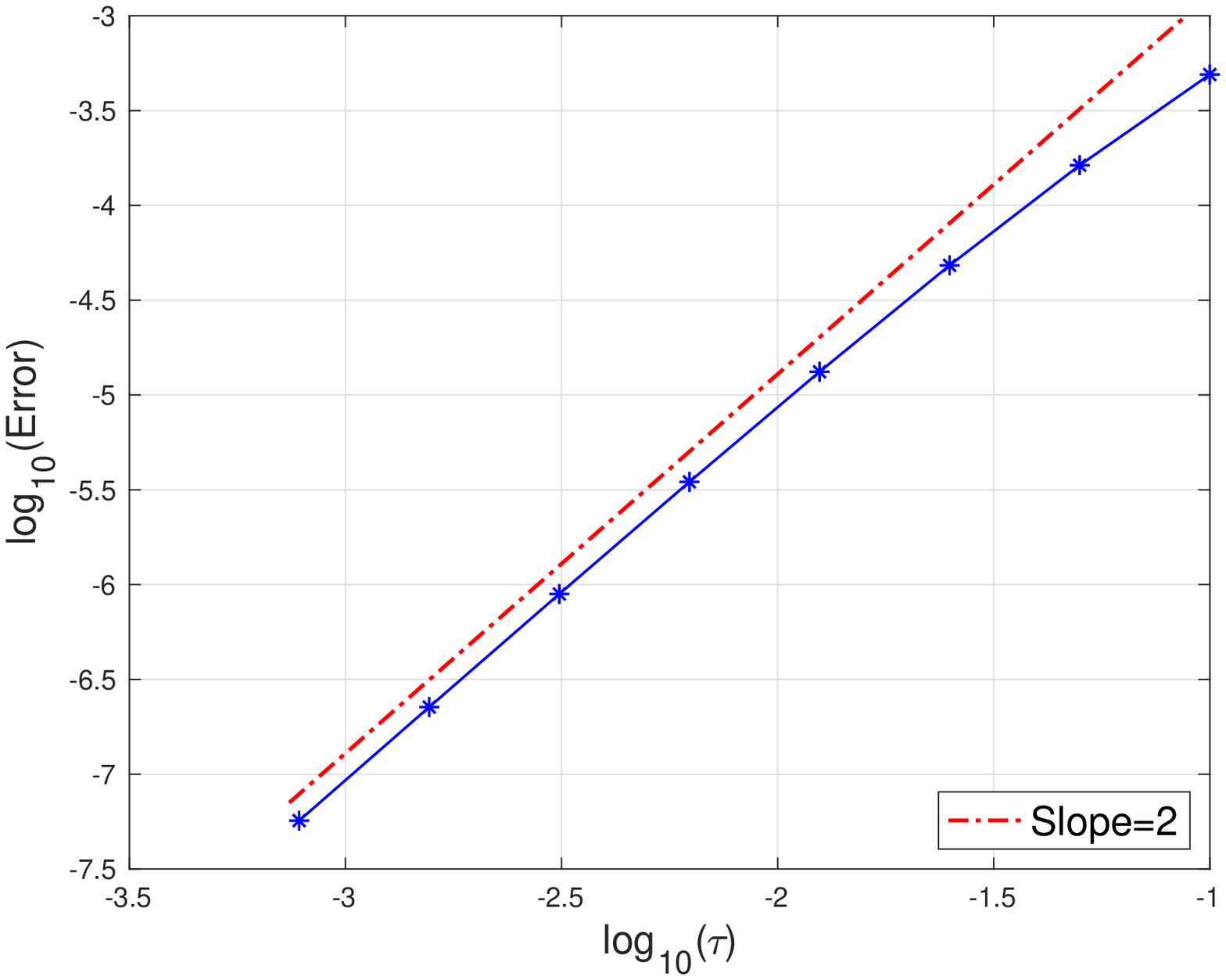}}
\centerline{\tiny(b) $\dps\max_{n}\big|r^{n+1}/\sqrt{E^{n}_{1,N}+C_{0}}-1\big|$ }
\end{minipage}
\begin{minipage}[t]{0.32\linewidth}
\centerline{\includegraphics[scale=0.28]{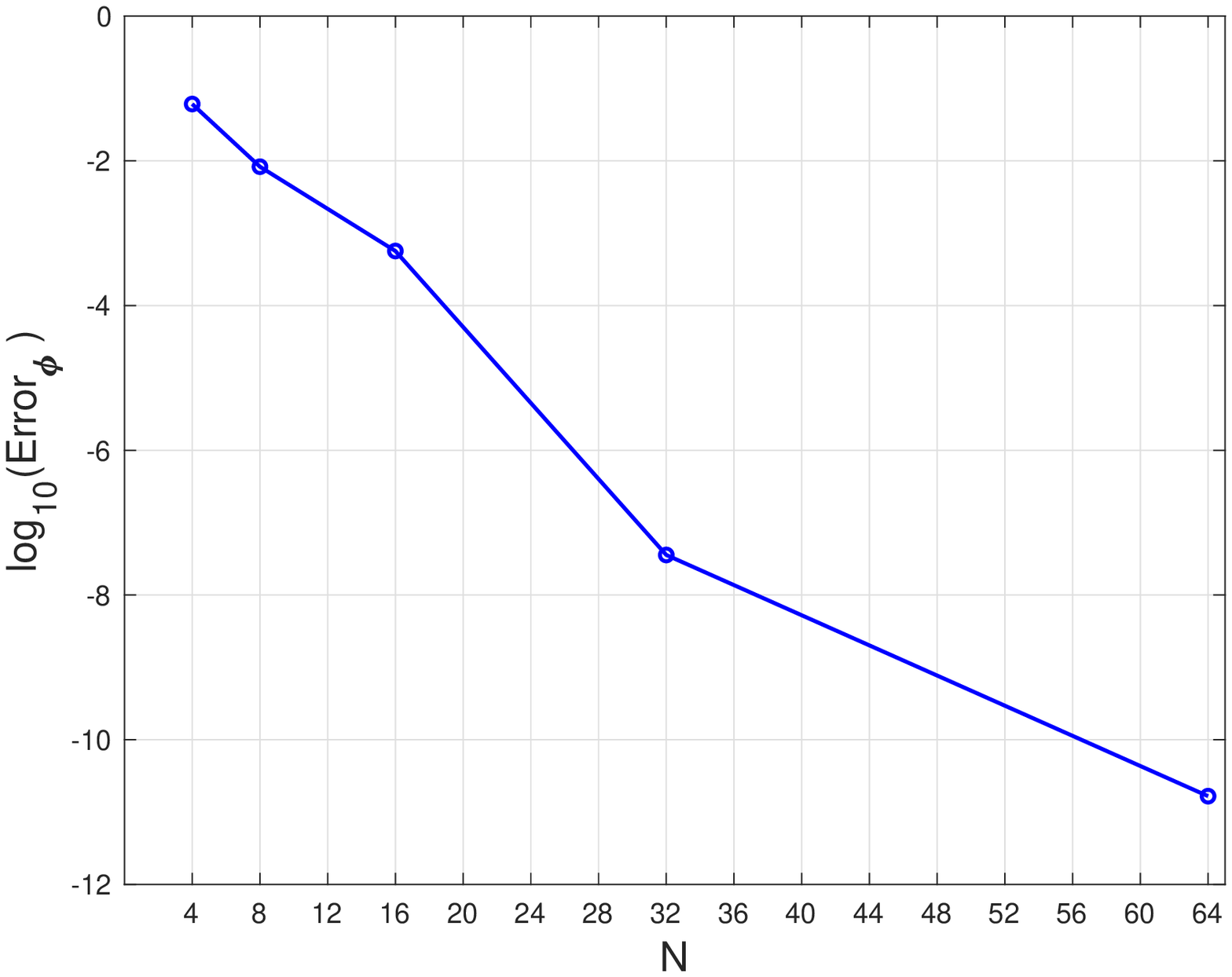}}
\centerline{\tiny(c) Convergence rates in space }
\end{minipage}
\caption{Convergence rates at $T=1$ for the scheme \eqref{BDF2_2} with $C_{0}=1/\Dt$.
}\label{fig1}
\end{figure*}

%\begin{figure*}[htbp]
%\begin{minipage}[t]{0.99\linewidth}
%\centerline{\includegraphics[scale=0.45]{}}
%\centerline{ }
%\end{minipage}
%\caption{The spacial error behavior of the numerical solutions $\phi^{n}$ versus the Fourier modes $N$ in semi-log scale at $T=1$.
%}\label{fig2}
%\end{figure*}

\begin{table}
  \begin{center}
      \caption{ Convergence rates in time at $T=1$ for the phase function $\phi^n$ in \eqref{BDF2_2} on nonuniform temporal meshes with $C_{0}=1/\Dt$.}\label{table1}
    \begin{tabular}{|ccc|cc|cc|cc|} \hline
    \multicolumn{3}{|c|}{temporal mesh}&\multicolumn{2}{|c|}{$\sigma=
    \frac{1}{2}$ }&\multicolumn{2}{|c|}{$\sigma=\frac{2}{3}$ }&\multicolumn{2}{|c|}{$\sigma=1$ } \\
     $M$&$\tau$&$\max\{\gamma_{n}\}$&error&order&error&order&error&order\\ \hline
     $20$ &7.91e-2&3.64&8.30e-5&-&1.01e-4&-&1.45e-4&-\\
     $40$ &4.12e-2&4.79&1.42e-6&2.71&1.93e-5&2.55&2.95e-5&2.44\\
    $80$ &2.21e-2&4.16&4.49e-6&1.84&4.72e-6&2.26&6.11e-6&2.52\\
    $160$ &1.11e-2&5.41&7.53e-7&2.58&8.57e-7&2.47&1.28e-6&2.25\\
    $320$ &5.55e-3&6.40&8.55e-8&3.16&1.05e-7&3.00&1.52e-7&3.10\\
   $640$ &2.70e-3&5.24&2.18e-8&1.89&2.84e-8&1.81&4.09e-8&1.81\\
   $1280$ &1.39e-3&7.06&5.71e-9&2.03&7.12e-9&2.10&1.00e-8&2.12\\
      \hline
    \end{tabular}
  \end{center}
\end{table}

\subsection{Time adaptive strategy}

In this part, the efficiency of the fully discrete scheme \eqref{BDF2_2} with a time adaptive strategy is investigated through a long time phase transition process. As we know, the phase transition process usually goes through different stages, changing quickly at the beginning and then rather slowly afterwards, until it reaches a steady state after a long time evolution.
Thus it would be very efficient to apply some time adaptive strategy in the simulation based on our proposed nonuniform BDF2 scheme \eqref{BDF2_2}.

We consider the phase transition process governed by the PFC equation \eqref{prob} with $\varepsilon=0.1$ and the initial  data $\phi^{0}_{i,j}=0.08+\eta_{i,j}$, where $\eta_{i,j}$ is a random number between $-0.08$ and $0.08$. Here, the computational domain is set to be $\Omega=(0,256)^{2}$.
The time adaptive strategy proposed in \cite{QZT11} is adopted in the simulation with our fully discrete scheme \eqref{BDF2_2}:
\bq\label{adp}
\dps\Dt_{n+1}=\min\Big(\max\big(\Dt_{min},\frac{\Dt_{max}}{\sqrt{1+\gamma|E^{'}(t)|^{2}}}\big),4.8645\cdot\Dt_{n}\Big),
\eq
where $\Dt_{min},\Dt_{max}$ are predetermined minimum and maximum time step sizes, and
$\gamma$ is a positive constant.
This type of the time adaptive strategy is based on the energy variation.
It will automatically select small time steps when the energy decays rapidly and choose larger time steps otherwise.
In this simulation, we set $\Dt_{min}=0.01$, $\Dt_{max}=5$ and $\gamma=10^{5}$.
Furthermore, $512\times512$ Fourier modes are used for the spatial discretization.
{\color{black} In order to capture the characteristics of the phase transition process with rapid energy changes where the small time step size $\Dt_{min}$ is used, we set $C_{0}=1/\Dt_{min}$ to improve the local accuracy of our proposed numerical scheme at these times of rapid energy changes. It has also been checked that the chosen $C_{0}$ is large enough such that $E_{1}(\cdot)+C_{0}>0.$}

In Figure \ref{fig2_1}, we plot the evolutions of the phase function $\phi^n$ up to $T=5000$ with {\color{black}four different types of temporal meshes, viz., the fixed large time step mesh with $\Dt=5$,  two different ones from the time adaptive strategy \eqref{adp}, and the uniform small time step mesh with $\Dt=0.01$.}
In Figure \ref{fig2_2} (a), it shows that
there is no obvious difference at $t=20$,
and the corresponding energies consist with each other at this stage.
After the energy goes through a large variation stage, the fixed large time step with $\Dt=5$ yields
inaccurate numerical solutions, while
{\color{black}both two tested adaptive time strategies with $\tau_{max}=1$ and $5$} give correct phase transition patterns consistent
with the results obtained by the small time step case $\Dt=0.01$.
{\color{black} It is observed in Figure \ref{fig2_2} (a) and (b) that both the computed modified energy $\widetilde{E}^{n}_{N}$ and the computed original energy $E(\phi^{n}_{N})$ are dissipative in time for the tested four different types of temporal meshes.}
Moreover, as shown in Figure \ref{fig2_2} (b), the energy dissipation of the proposed  scheme \eqref{BDF2_2}  with the adaptive strategy \eqref{adp} consists very well with the one using the fixed small time step $\Dt=0.01$.
The corresponding time step sizes are also plotted in Figure \ref{fig2_2} (c),
which shows the efficiency of our proposed  scheme \eqref{BDF2_2} with the adaptive strategy \eqref{adp} during this long time simulation.
{\color{black}Furthermore, using a smaller $\tau_{max}$ in  adaptive time strategy (4.1) certainly improves the accuracy more or less, but may
lead to considerable increase in the computational cost,  as shown in Figure 3 (c).}
\begin{figure*}[htbp]
\begin{minipage}[t]{0.19\linewidth}
\centerline{\includegraphics[scale=0.16]{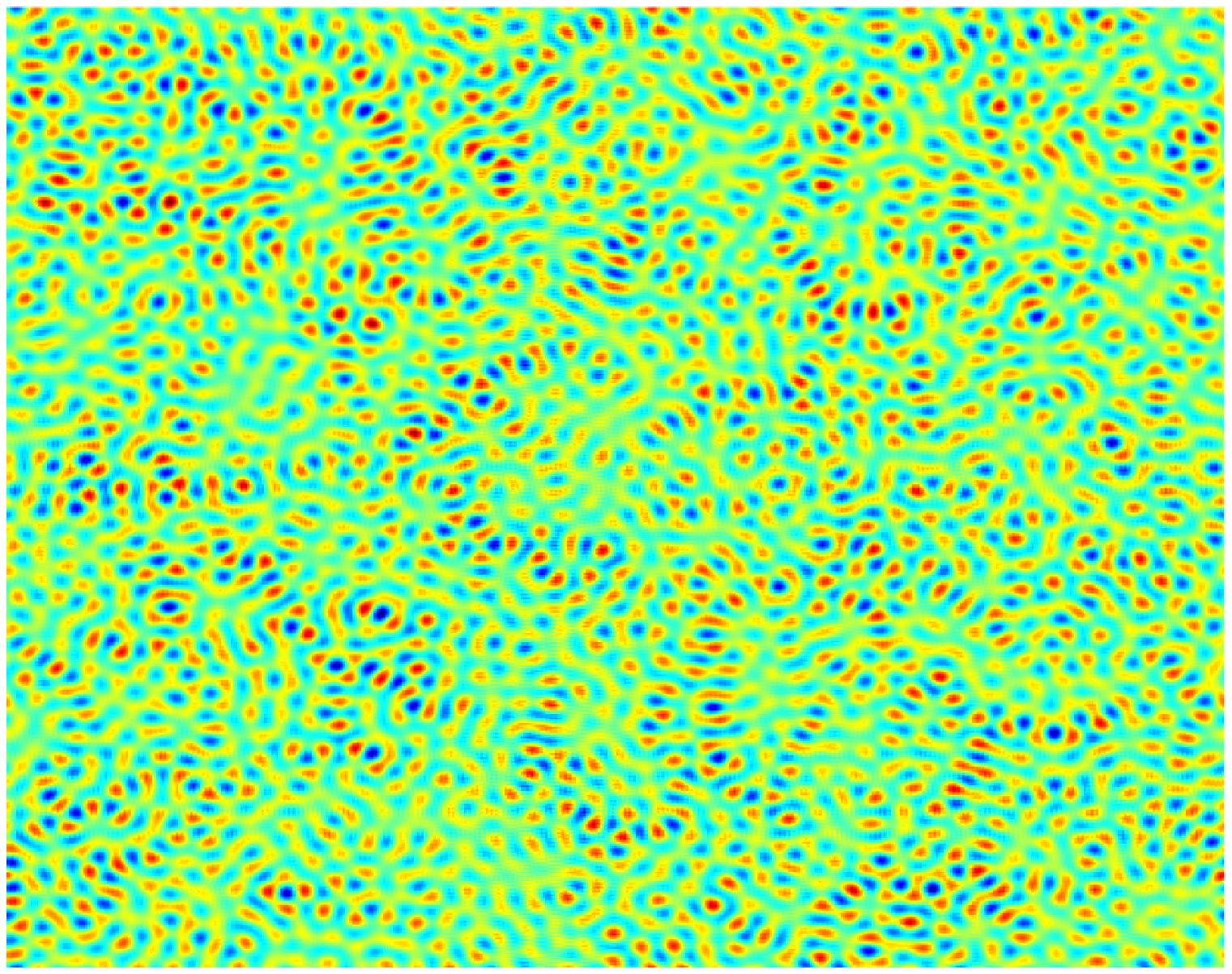}}
\centerline{}
\end{minipage}
\begin{minipage}[t]{0.19\linewidth}
\centerline{\includegraphics[scale=0.16]{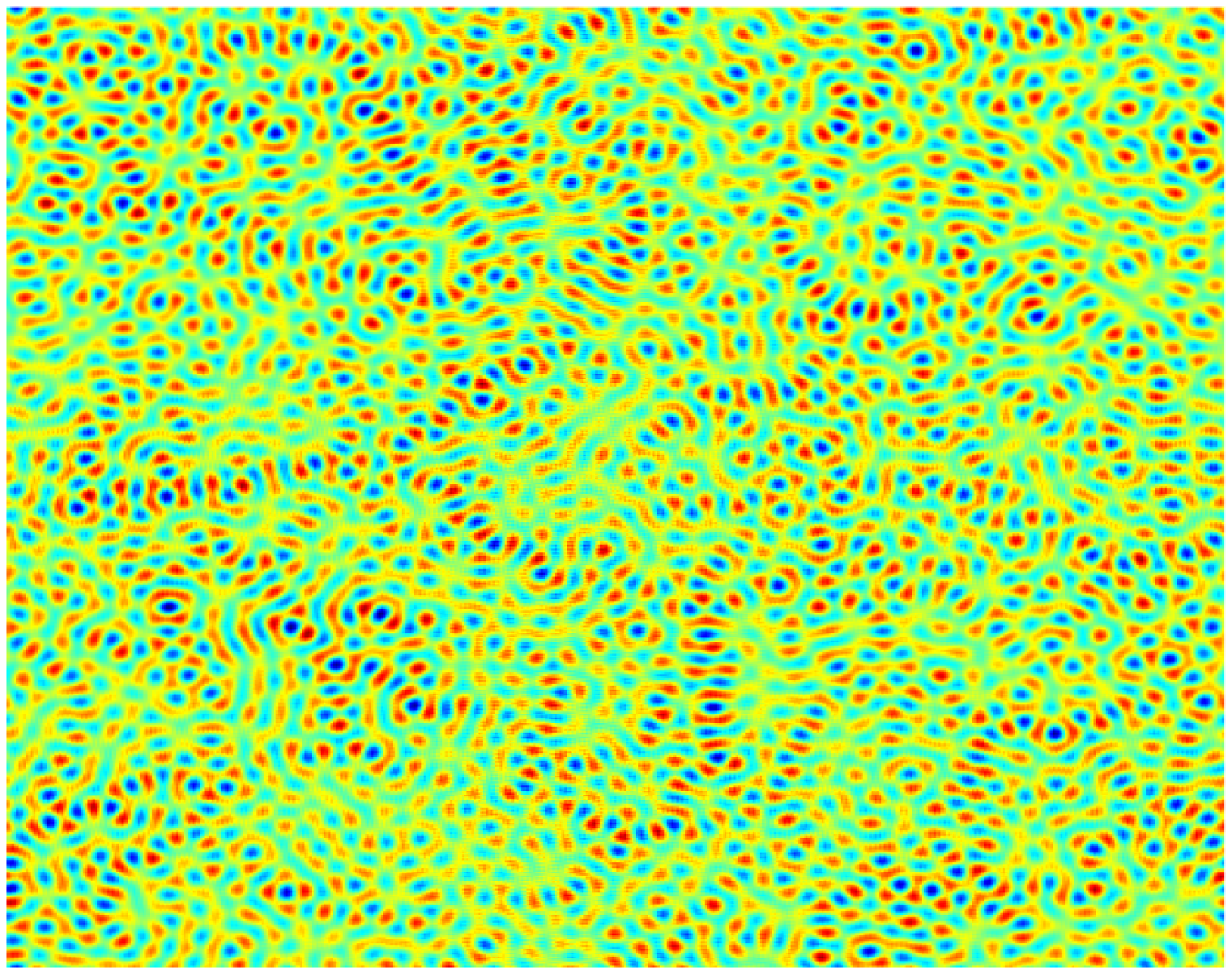}}
\centerline{}
\end{minipage}
\begin{minipage}[t]{0.19\linewidth}
\centerline{\includegraphics[scale=0.16]{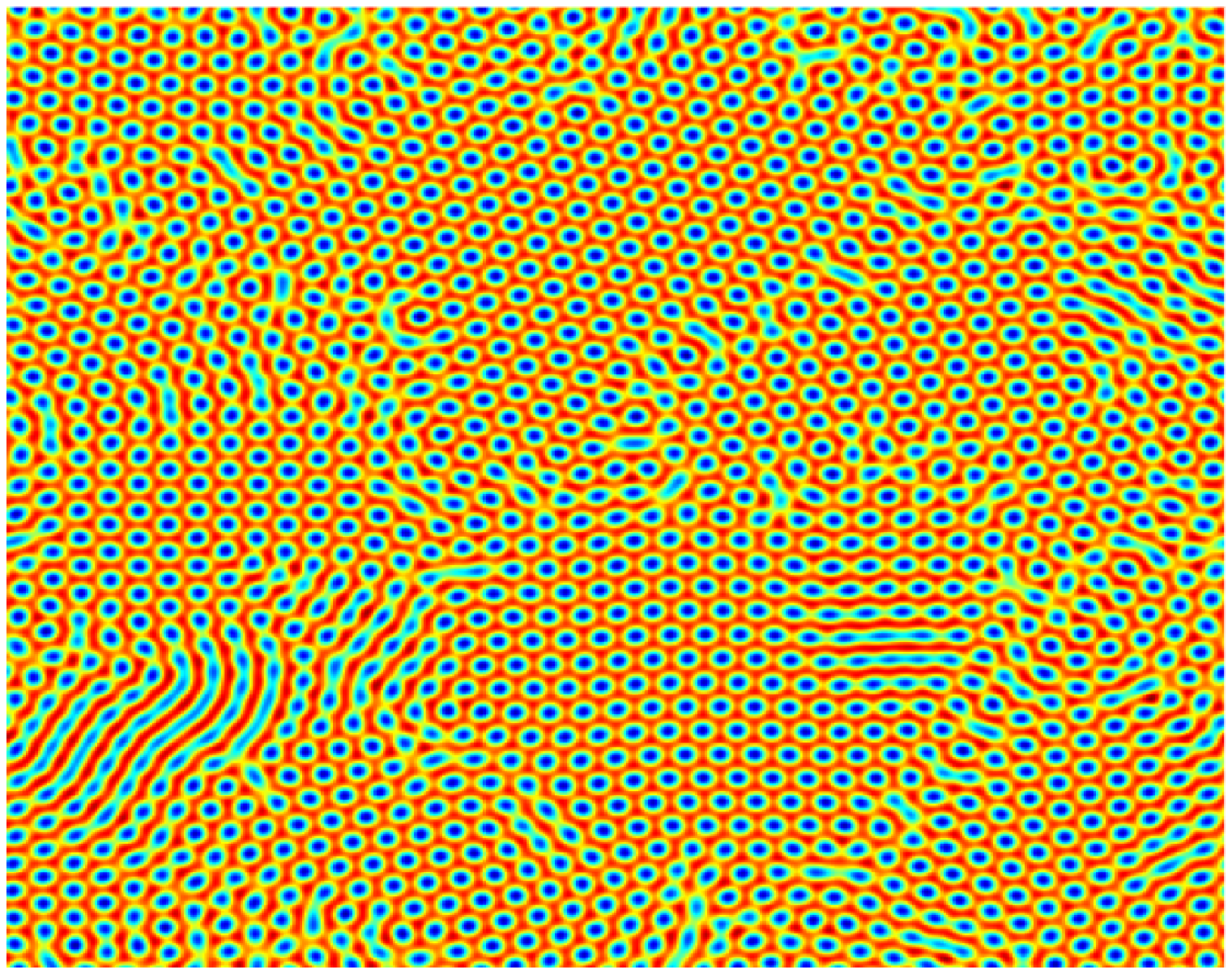}}
\centerline{(a) solutions with fixed large time step size $\tau=5$.}
\end{minipage}
\begin{minipage}[t]{0.19\linewidth}
\centerline{\includegraphics[scale=0.16]{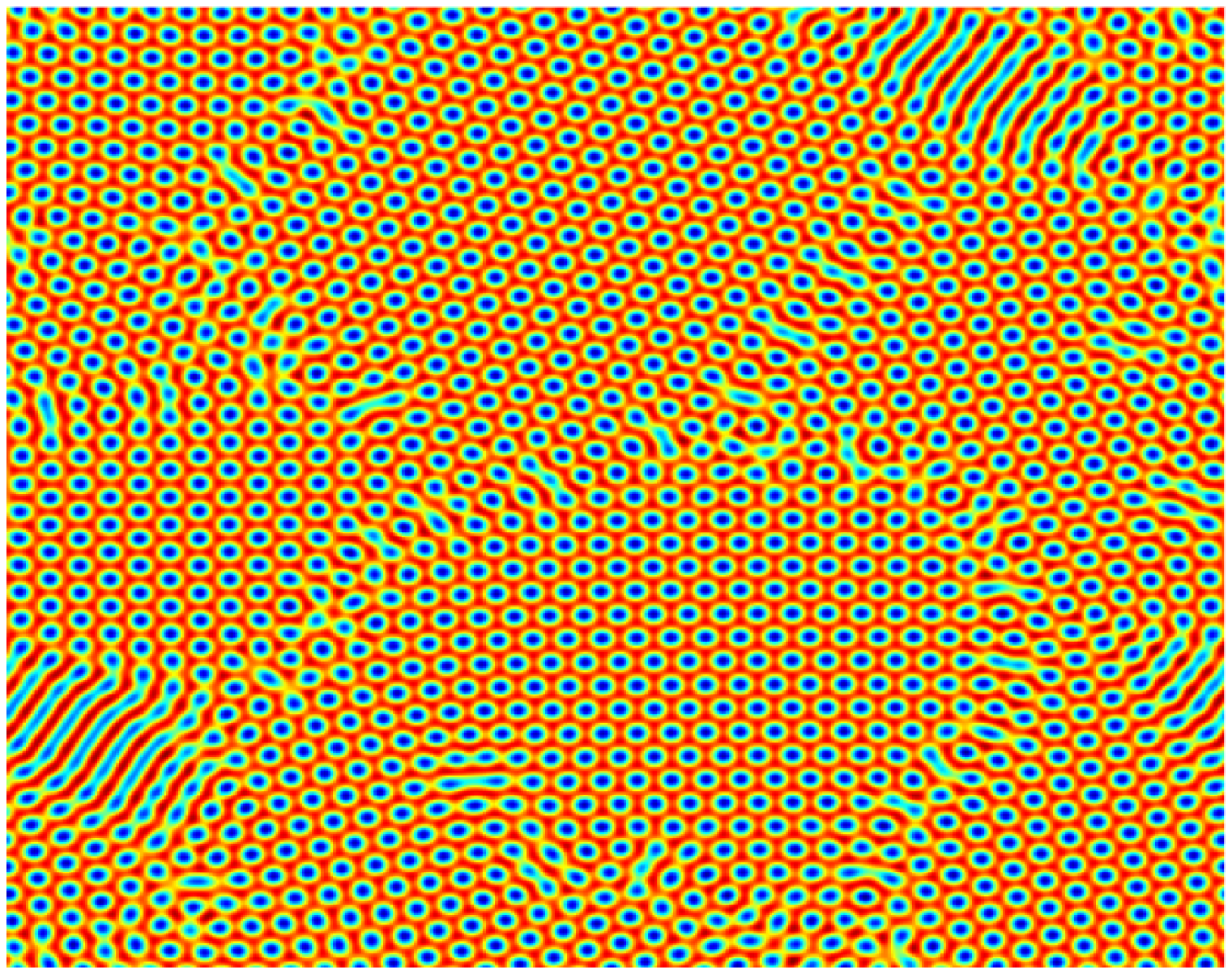}}
\centerline{}
\end{minipage}
\begin{minipage}[t]{0.19\linewidth}
\centerline{\includegraphics[scale=0.16]{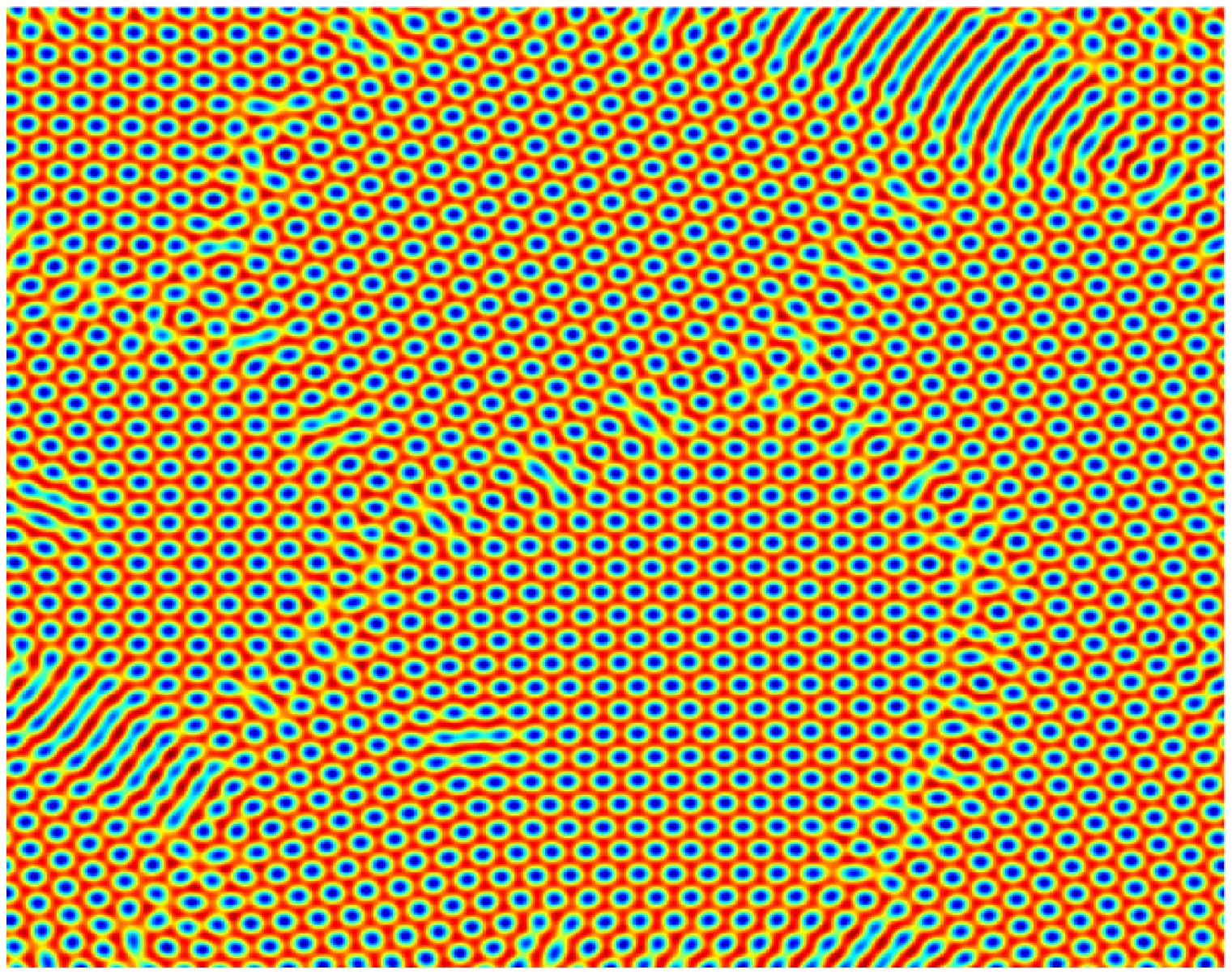}}
\centerline{}
\end{minipage}
\vskip 1mm
\begin{minipage}[t]{0.19\linewidth}
\centerline{\includegraphics[scale=0.16]{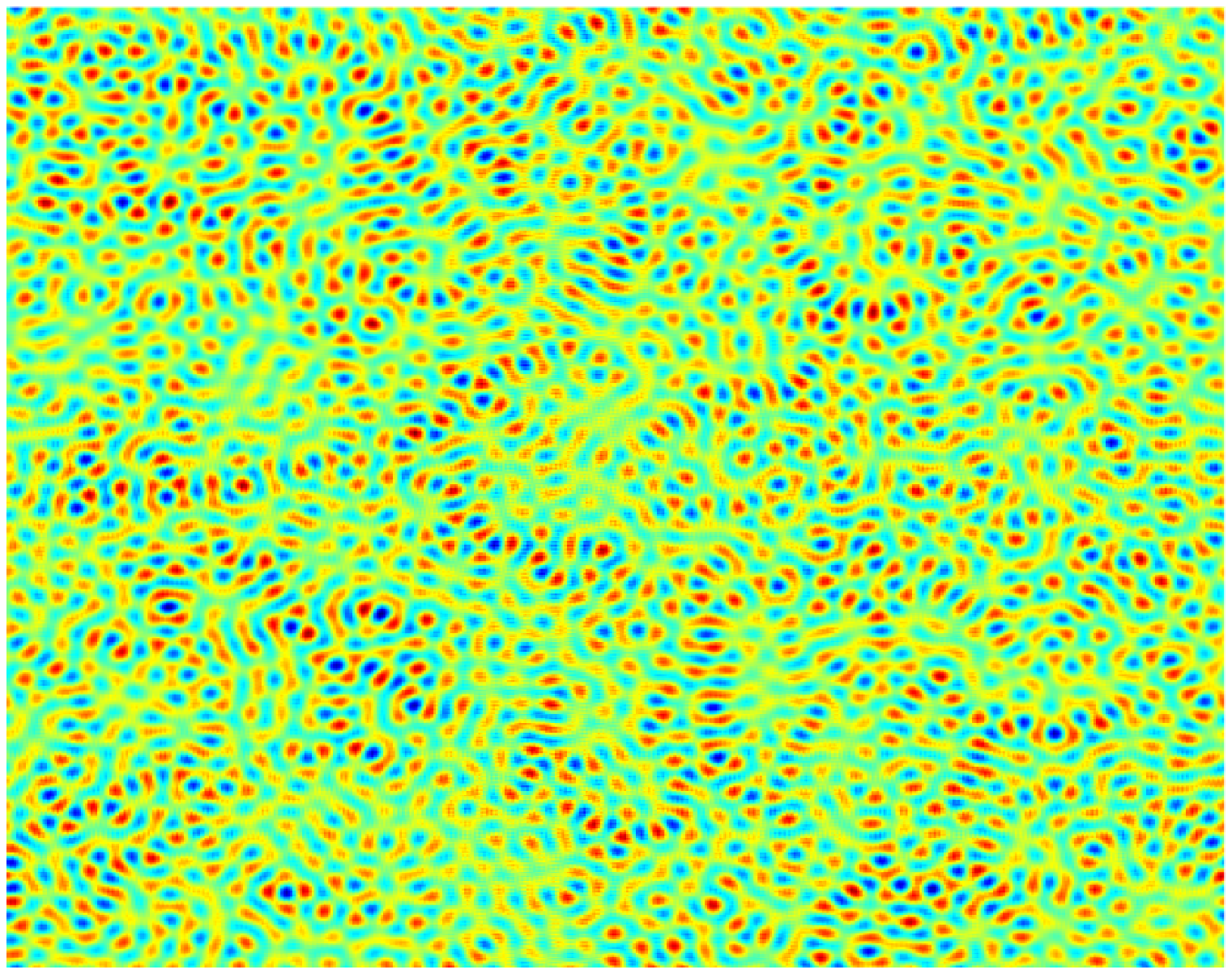}}
\centerline{}
\end{minipage}
\begin{minipage}[t]{0.19\linewidth}
\centerline{\includegraphics[scale=0.16]{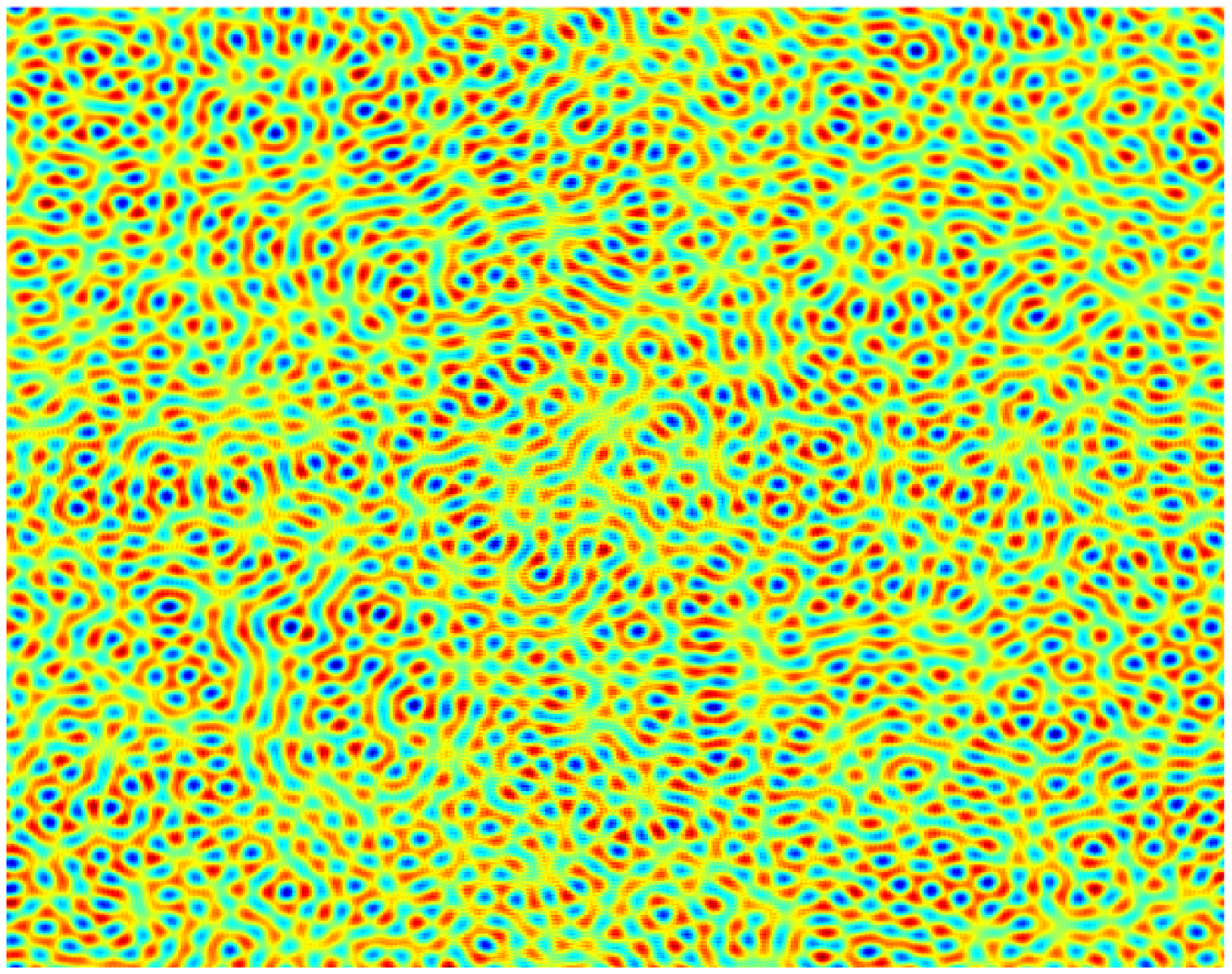}}
\centerline{}
\end{minipage}
\begin{minipage}[t]{0.19\linewidth}
\centerline{\includegraphics[scale=0.16]{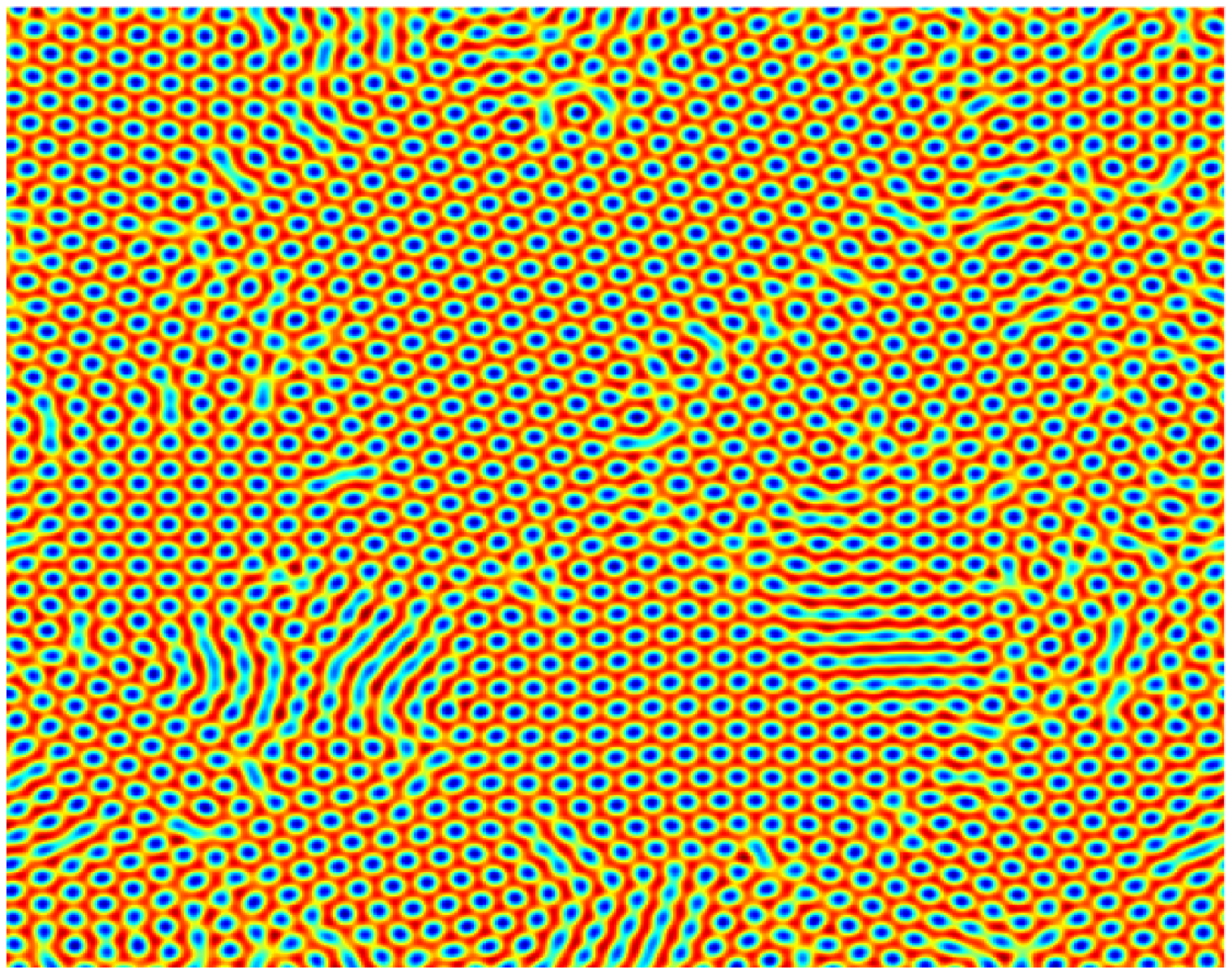}}
\centerline{(b) solutions with adaptive time steps  with $\tau_{min}=0.01, \tau_{max}=5$ and $\gamma=10^{5}.$}
\end{minipage}
\begin{minipage}[t]{0.19\linewidth}
\centerline{\includegraphics[scale=0.16]{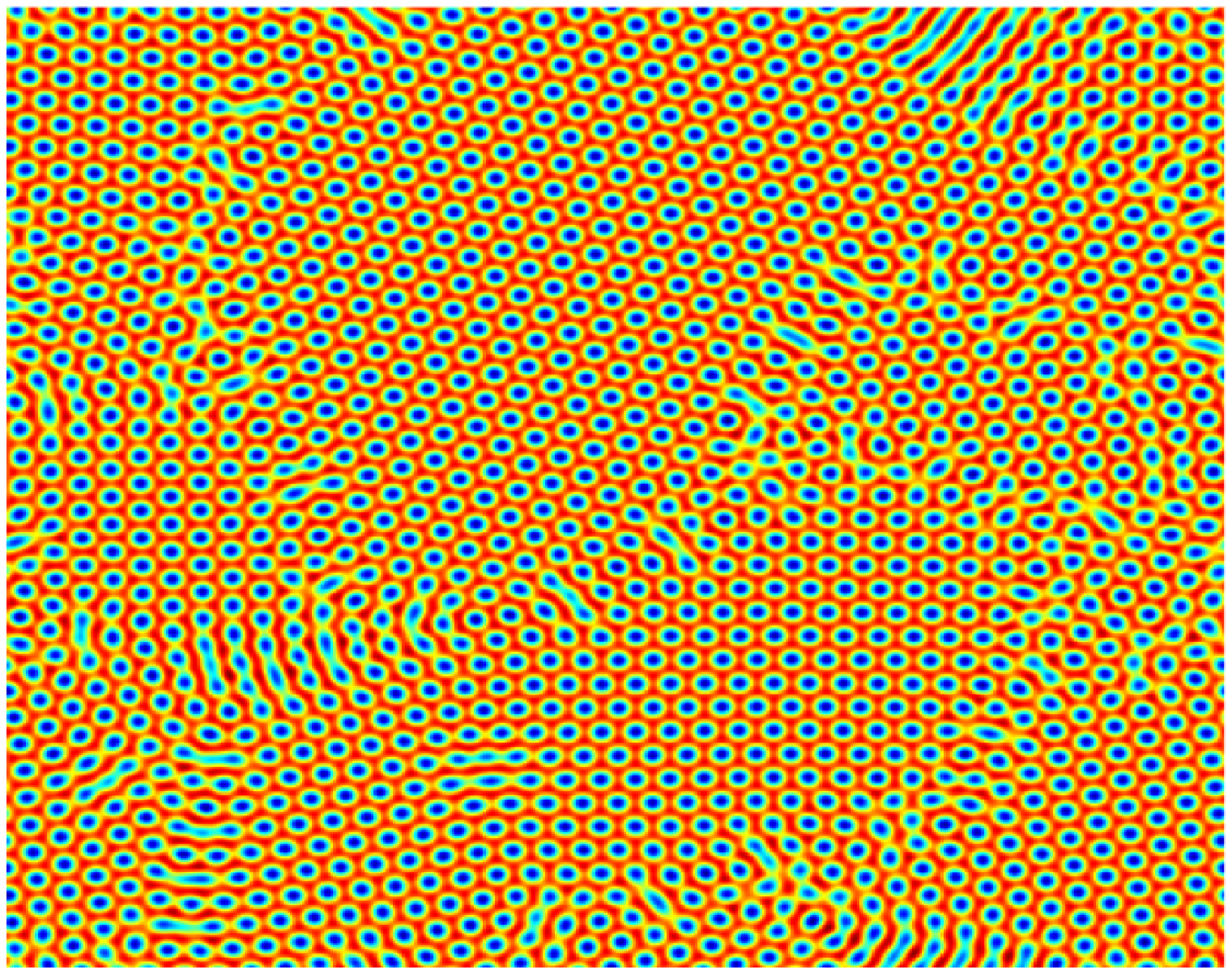}}
\centerline{}
\end{minipage}
\begin{minipage}[t]{0.19\linewidth}
\centerline{\includegraphics[scale=0.16]{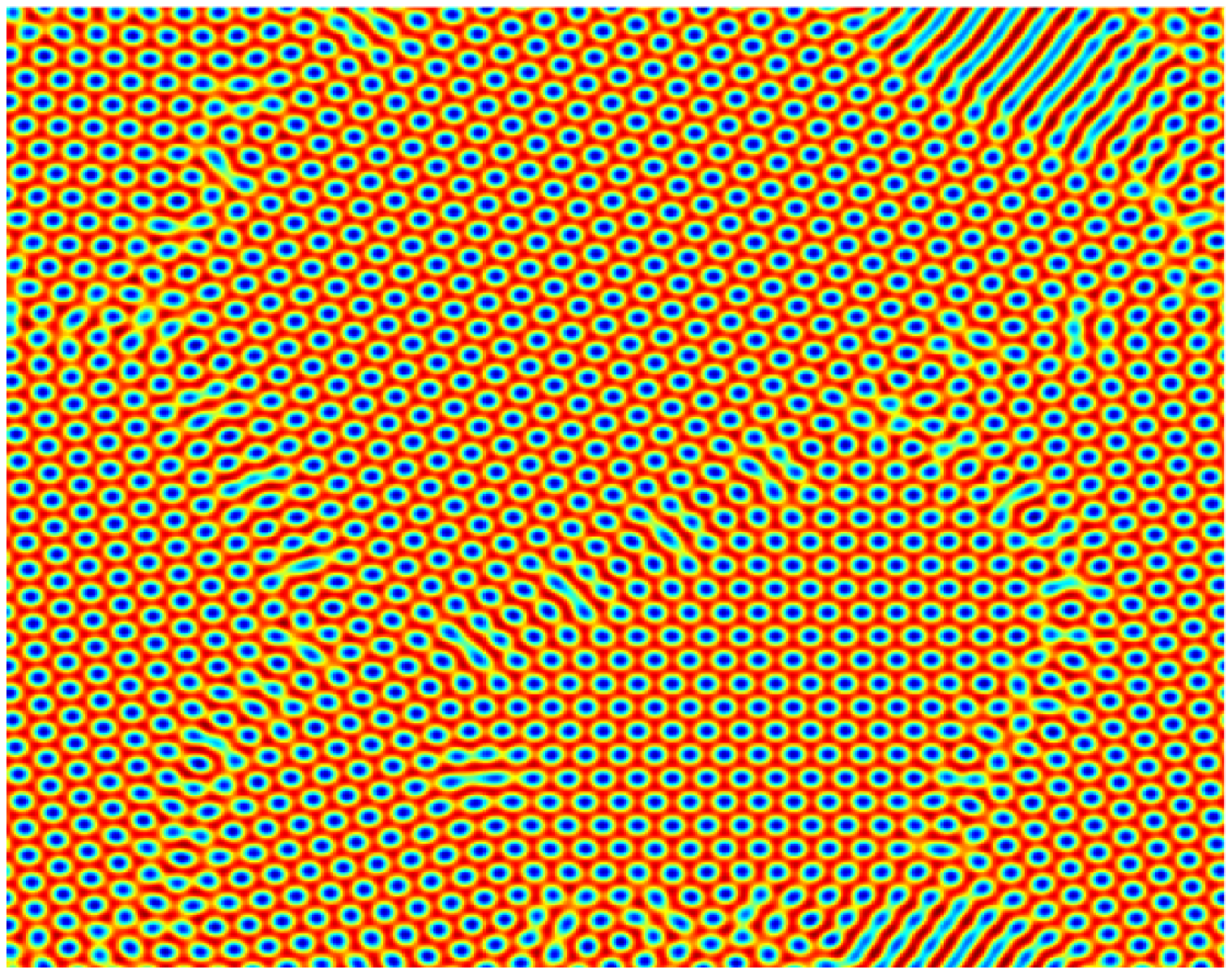}}
\centerline{}
\end{minipage}
\vskip 1mm
\begin{minipage}[t]{0.19\linewidth}
\centerline{\includegraphics[scale=0.16]{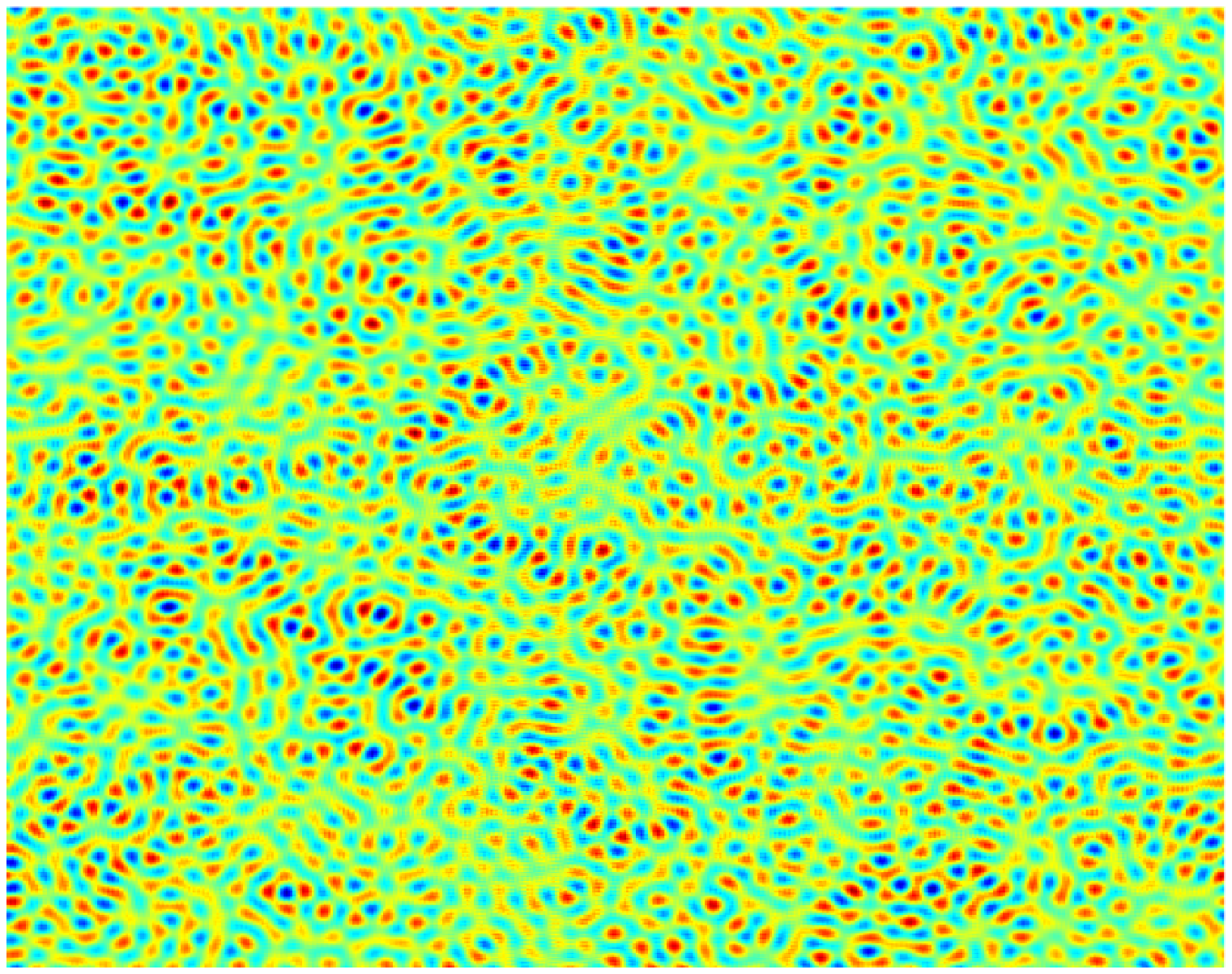}}
\centerline{}
\end{minipage}
\begin{minipage}[t]{0.19\linewidth}
\centerline{\includegraphics[scale=0.16]{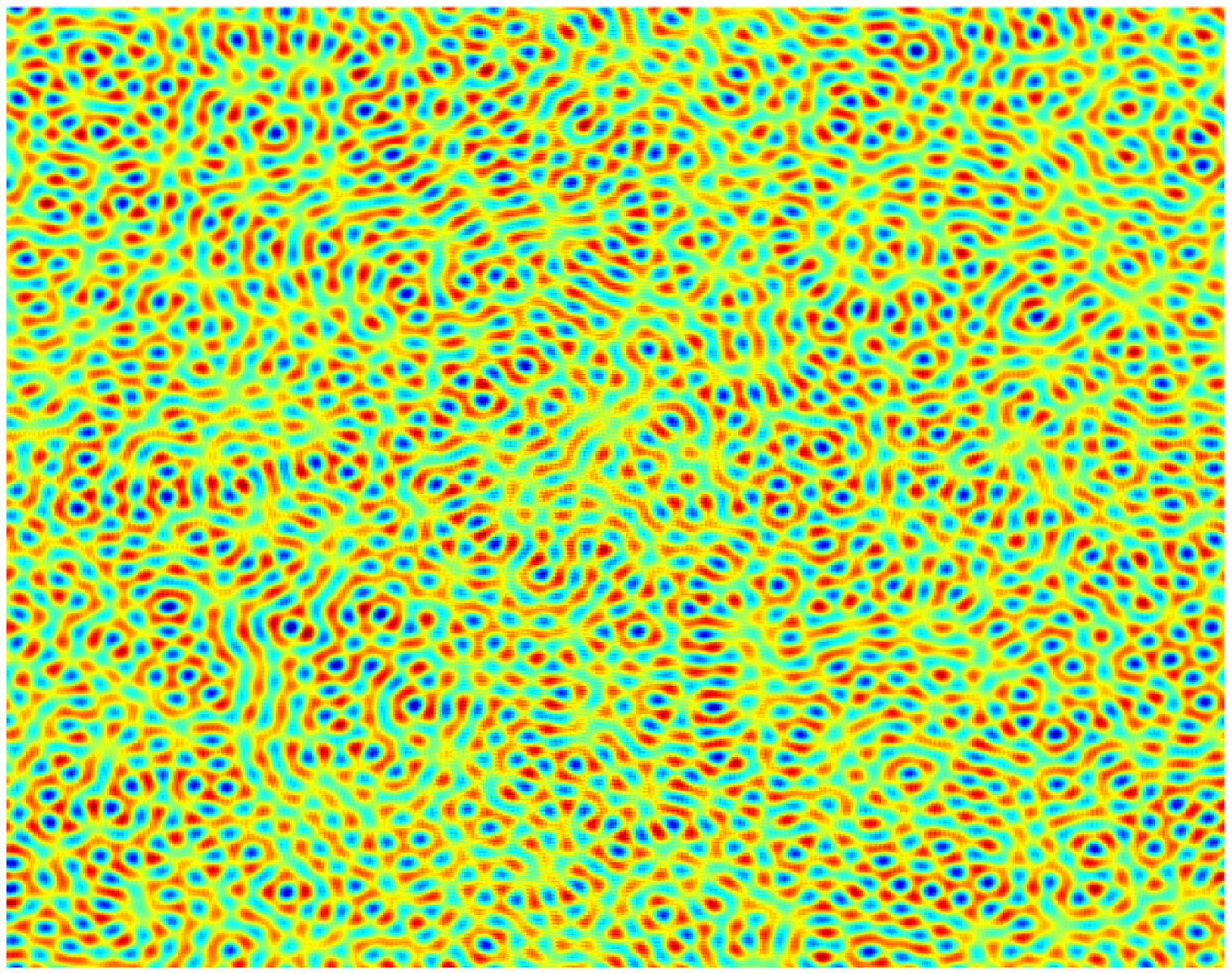}}
\centerline{}
\end{minipage}
\begin{minipage}[t]{0.19\linewidth}
\centerline{\includegraphics[scale=0.16]{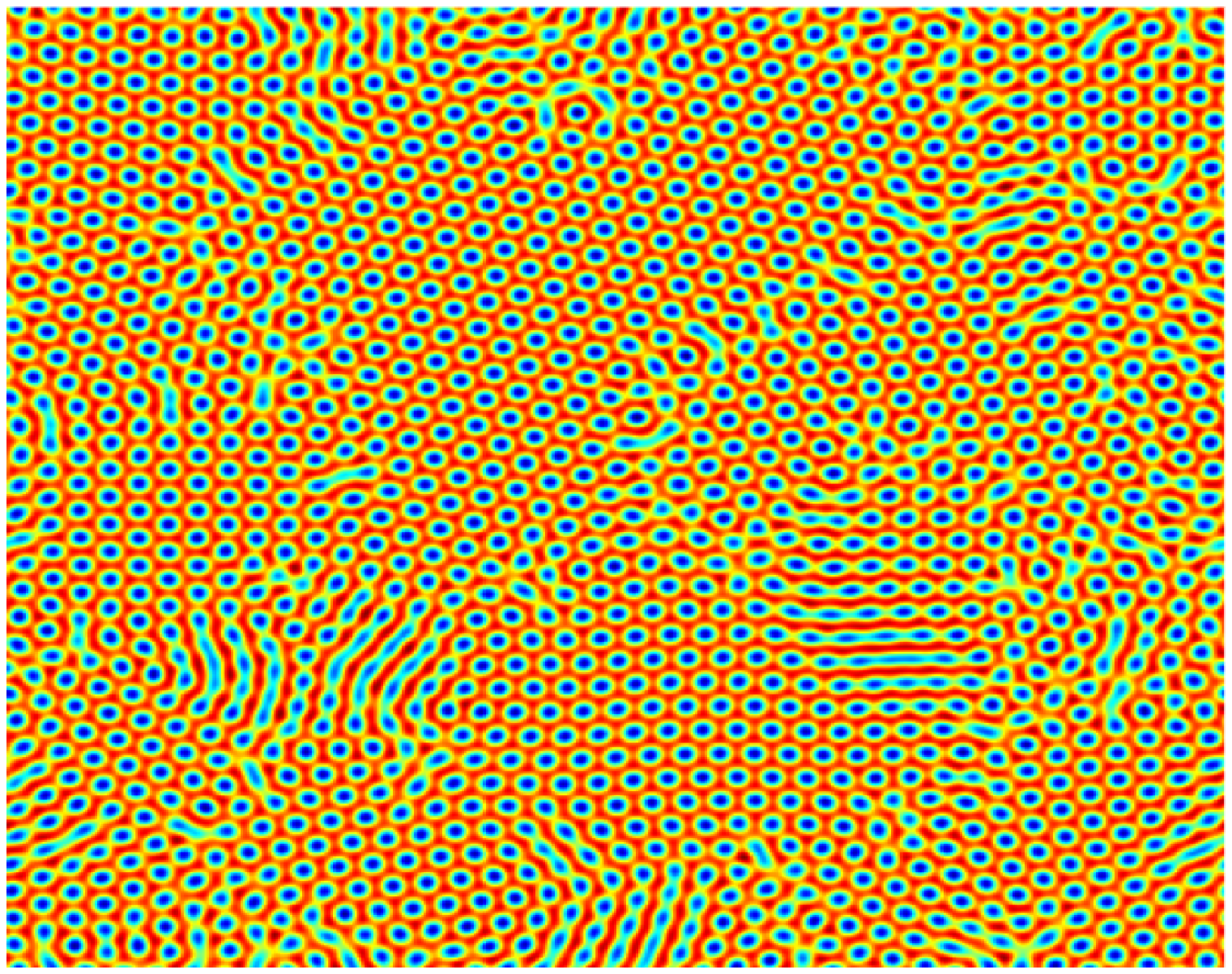}}
\centerline{(c) solutions with adaptive time steps  with $\tau_{min}=0.01, \tau_{max}=1$ and $\gamma=10^{5}.$}
\end{minipage}
\begin{minipage}[t]{0.19\linewidth}
\centerline{\includegraphics[scale=0.16]{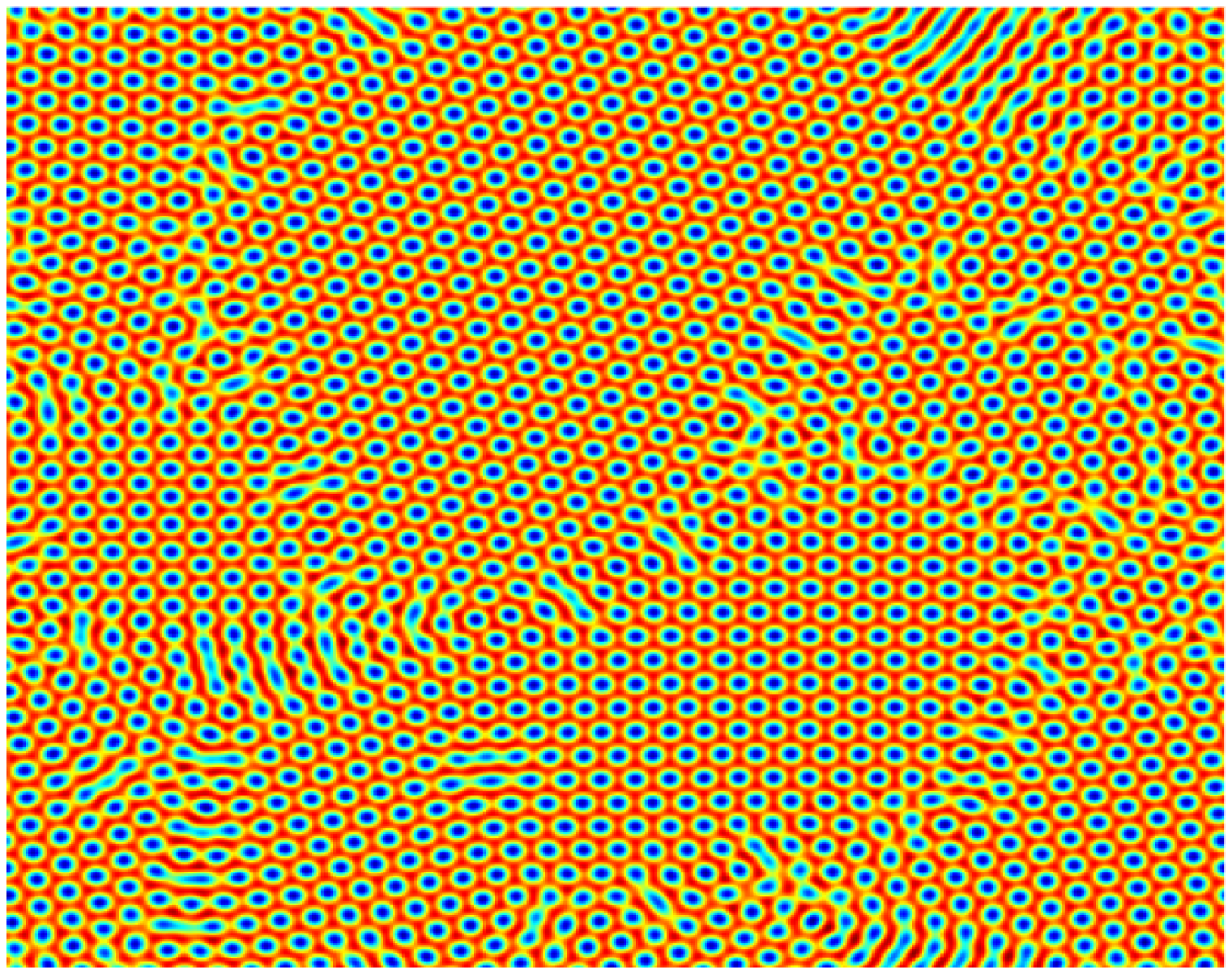}}
\centerline{}
\end{minipage}
\begin{minipage}[t]{0.19\linewidth}
\centerline{\includegraphics[scale=0.16]{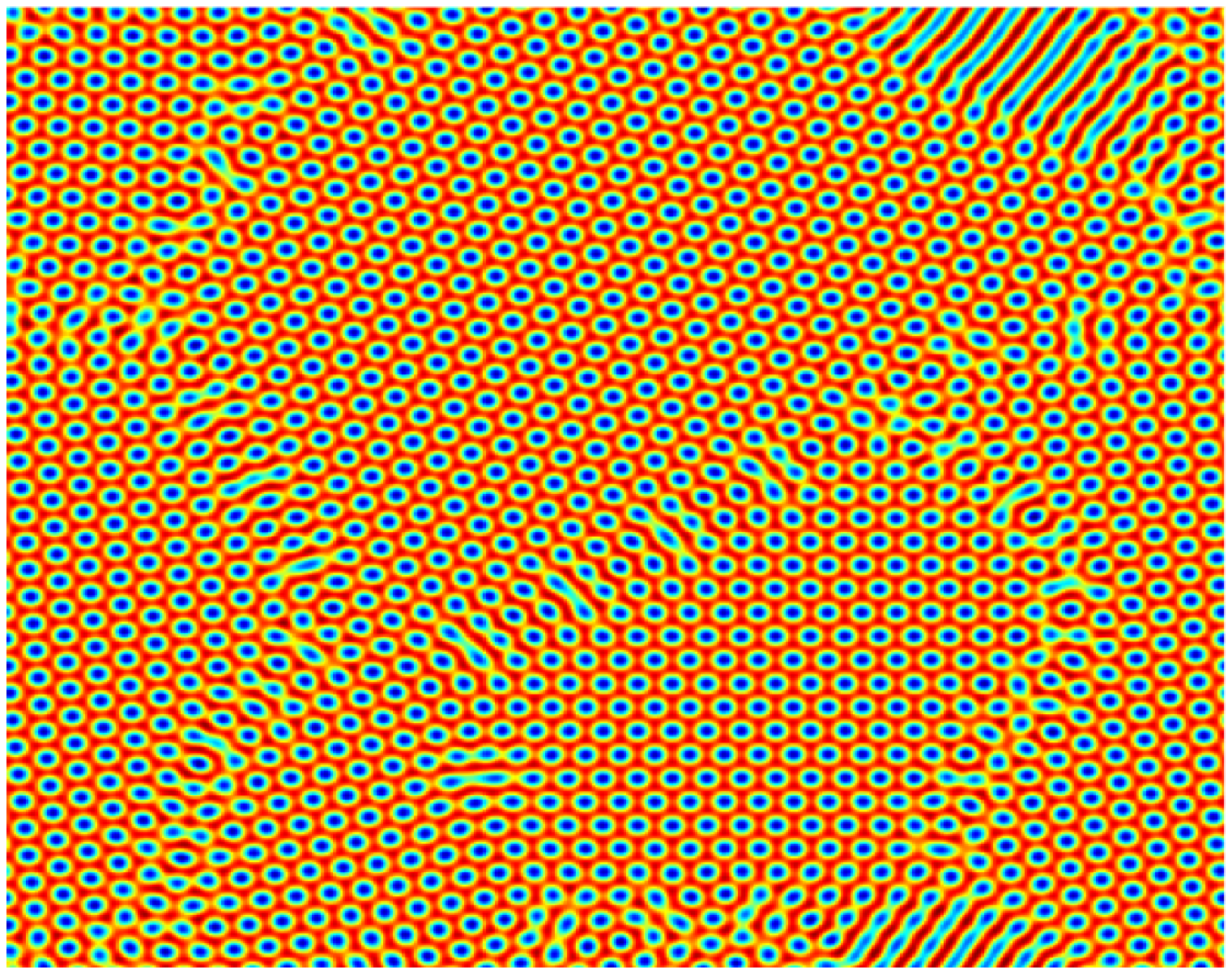}}
\centerline{}
\end{minipage}
\vskip 1mm
\begin{minipage}[t]{0.19\linewidth}
\centerline{\includegraphics[scale=0.16]{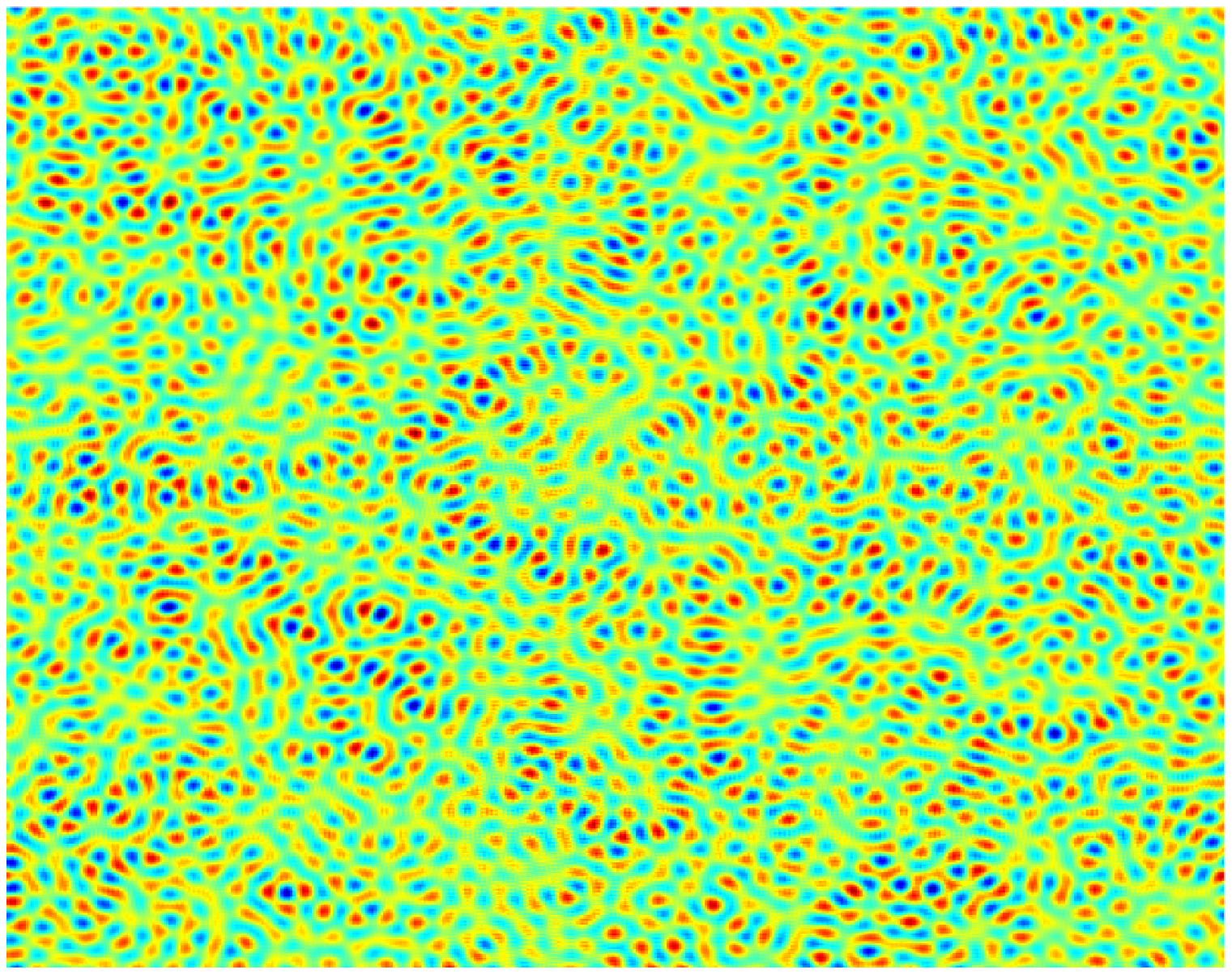}}
\centerline{}
\end{minipage}
\begin{minipage}[t]{0.19\linewidth}
\centerline{\includegraphics[scale=0.16]{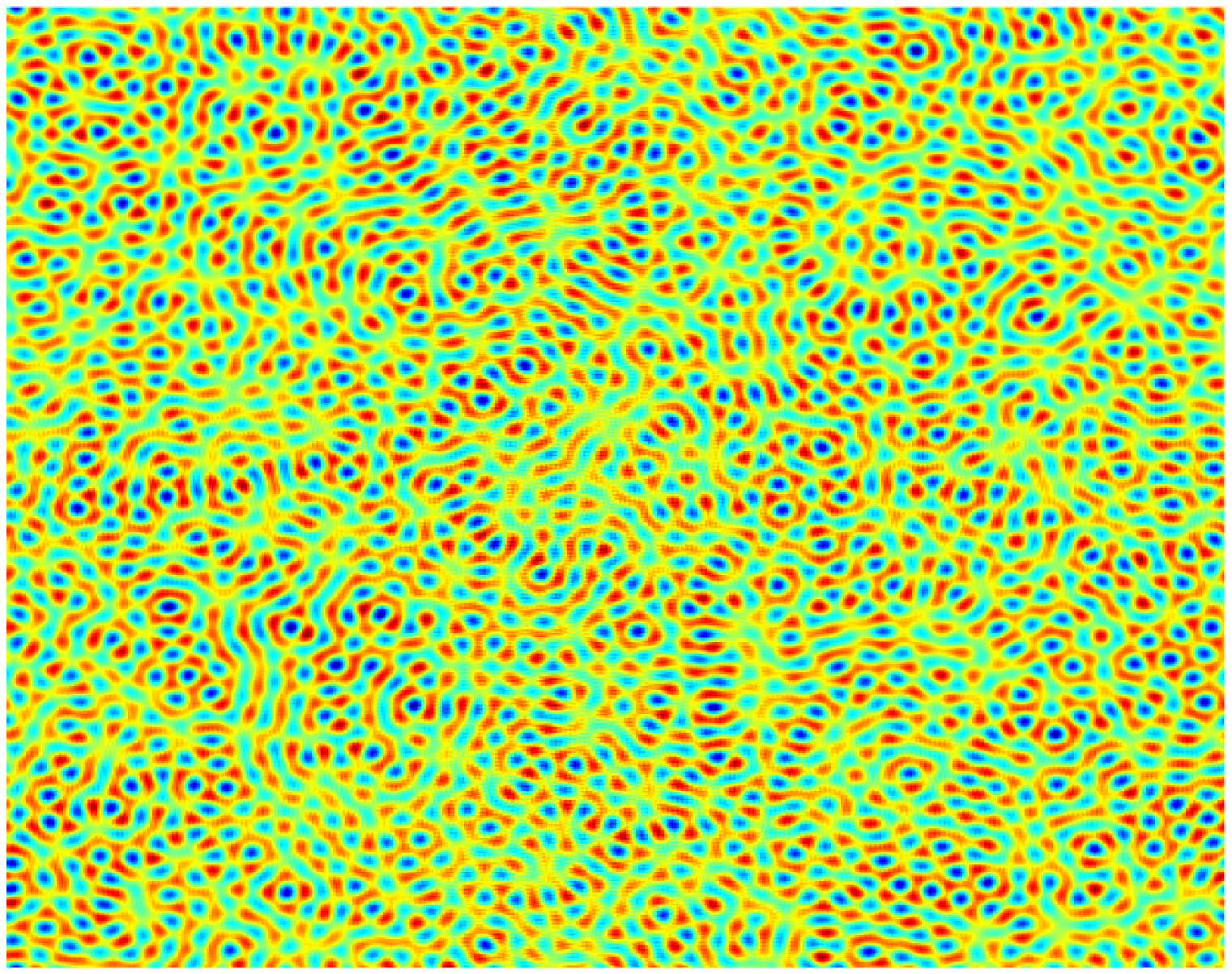}}
\centerline{}
\end{minipage}
\begin{minipage}[t]{0.19\linewidth}
\centerline{\includegraphics[scale=0.16]{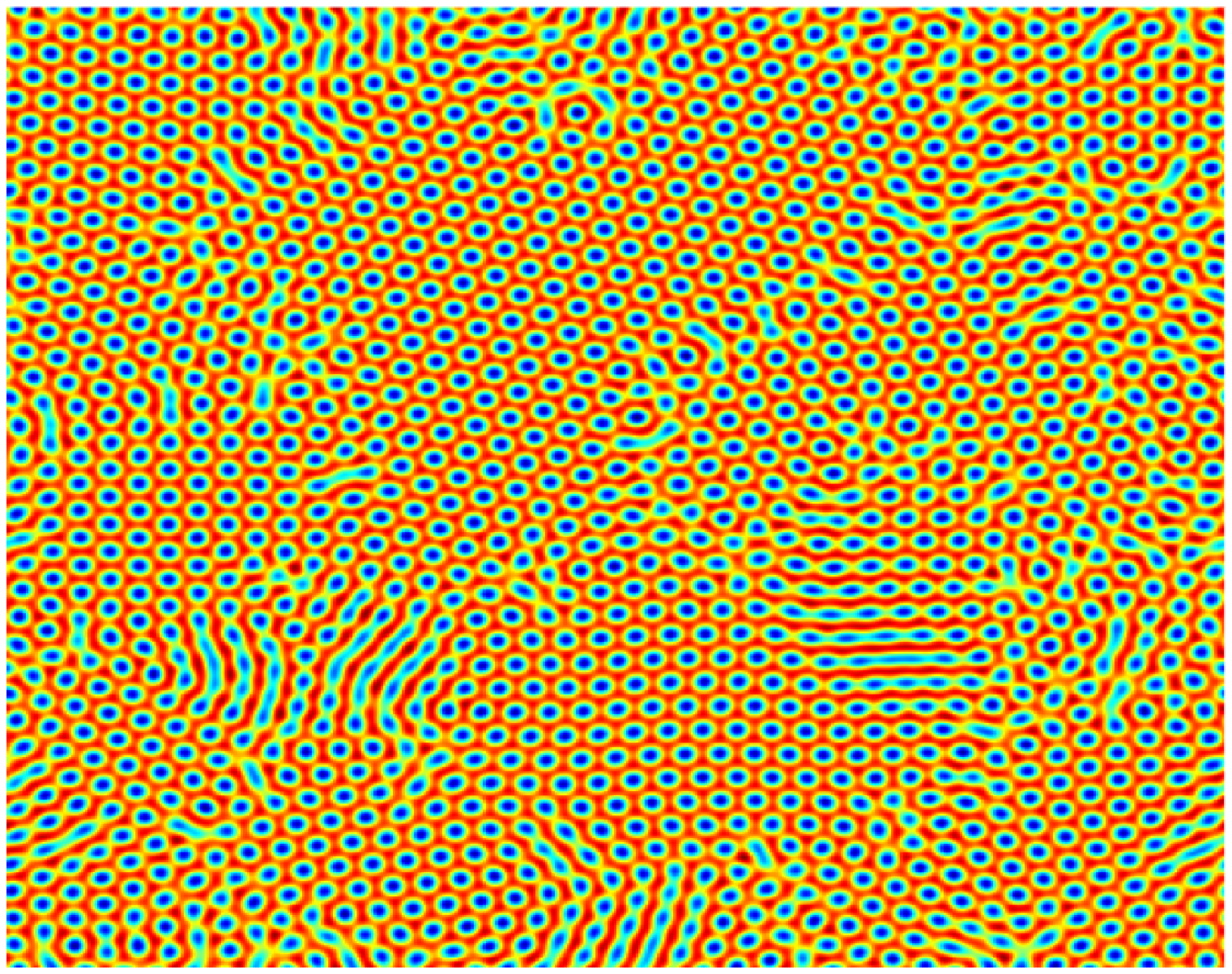}}
\centerline{(d) solutions with fixed small time step size $\tau=0.01$.}
\end{minipage}
\begin{minipage}[t]{0.19\linewidth}
\centerline{\includegraphics[scale=0.16]{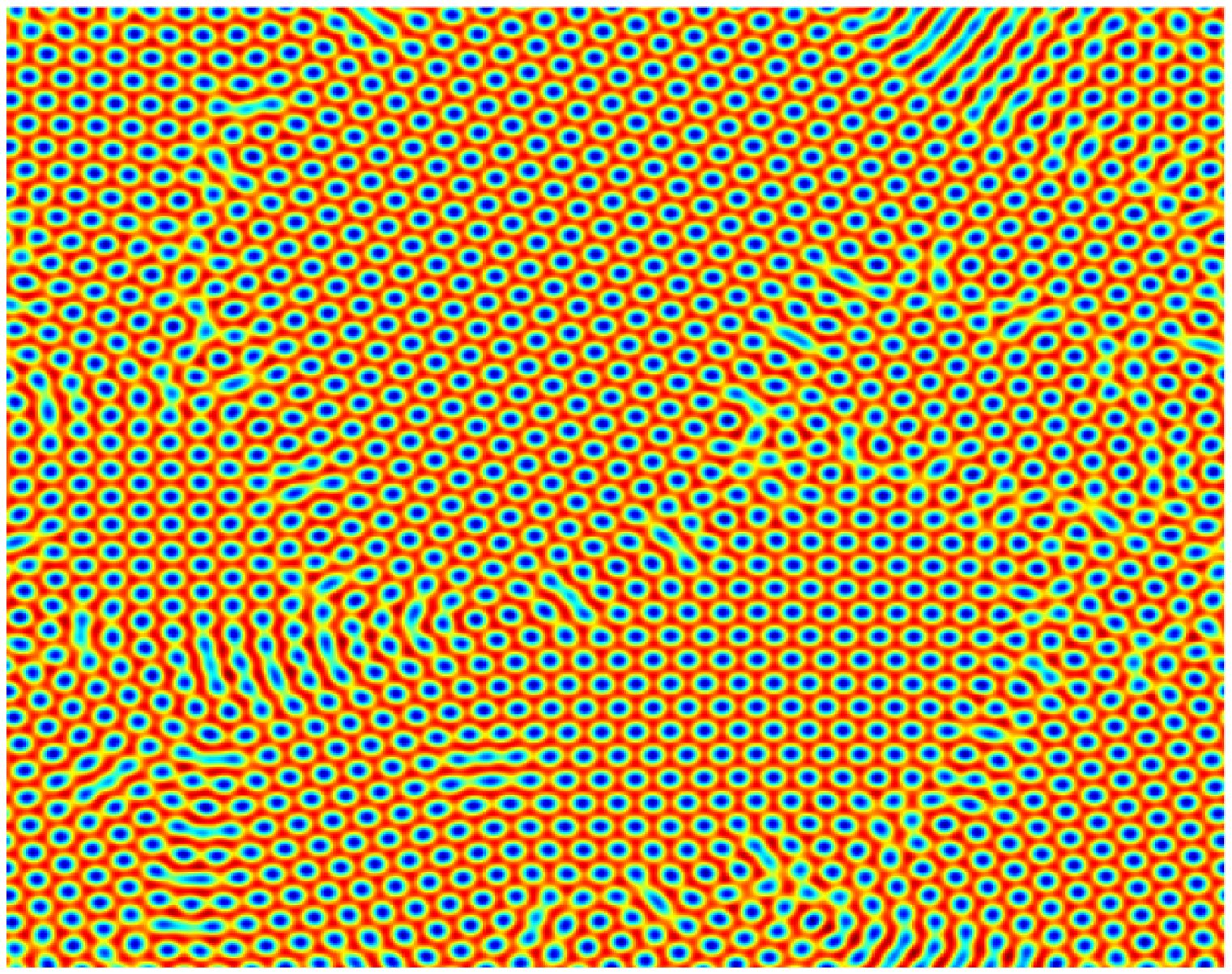}}
\centerline{}
\end{minipage}
\begin{minipage}[t]{0.19\linewidth}
\centerline{\includegraphics[scale=0.16]{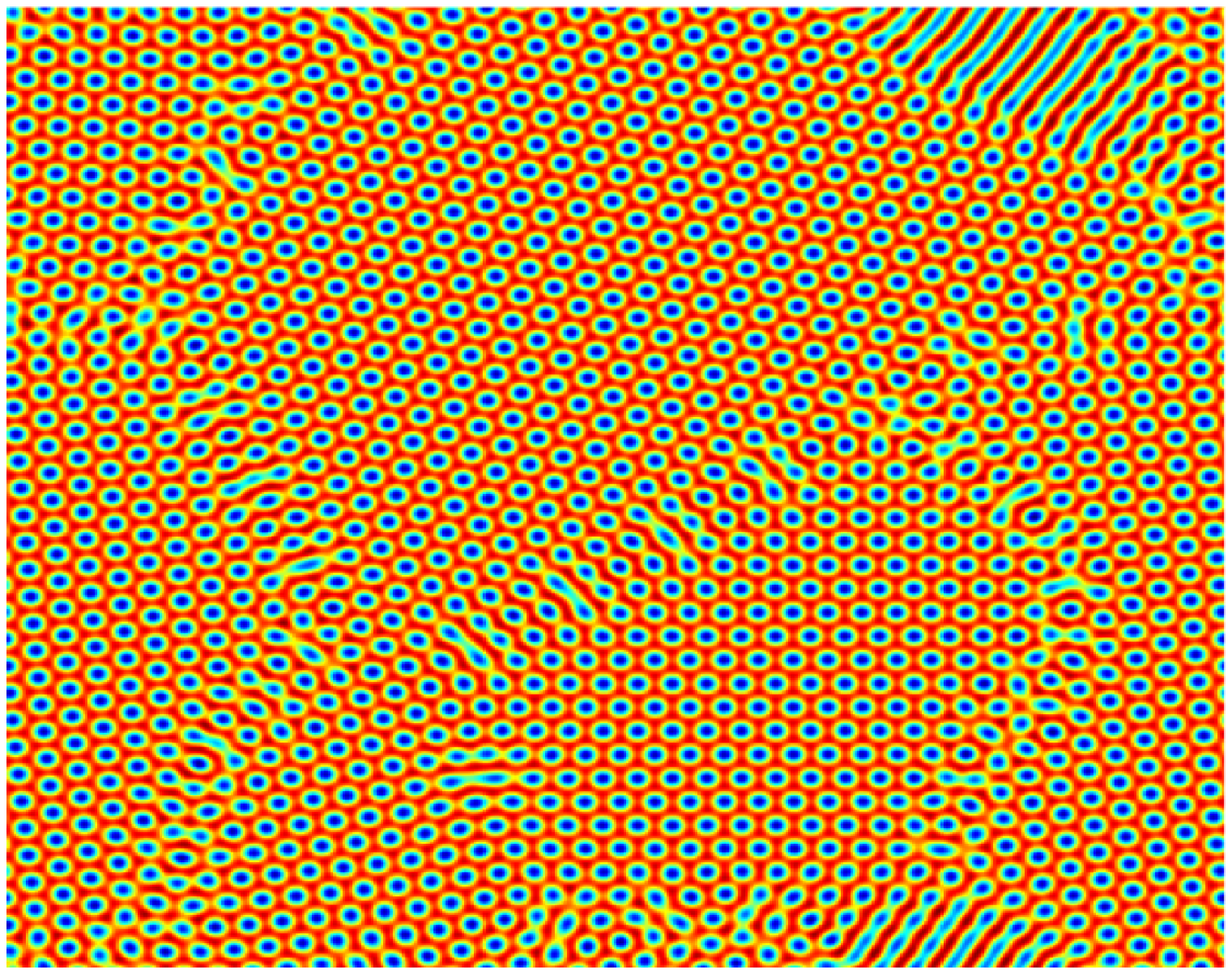}}
\centerline{}
\end{minipage}
\caption{Solution snapshots of the phase transition process for the PFC equation at around $t=20,50, 500, 1500,$ and $5000$, respectively.
}\label{fig2_1}
\end{figure*}

\begin{figure*}[htbp]
\begin{minipage}[t]{0.49\linewidth}
\centerline{\includegraphics[scale=0.4]{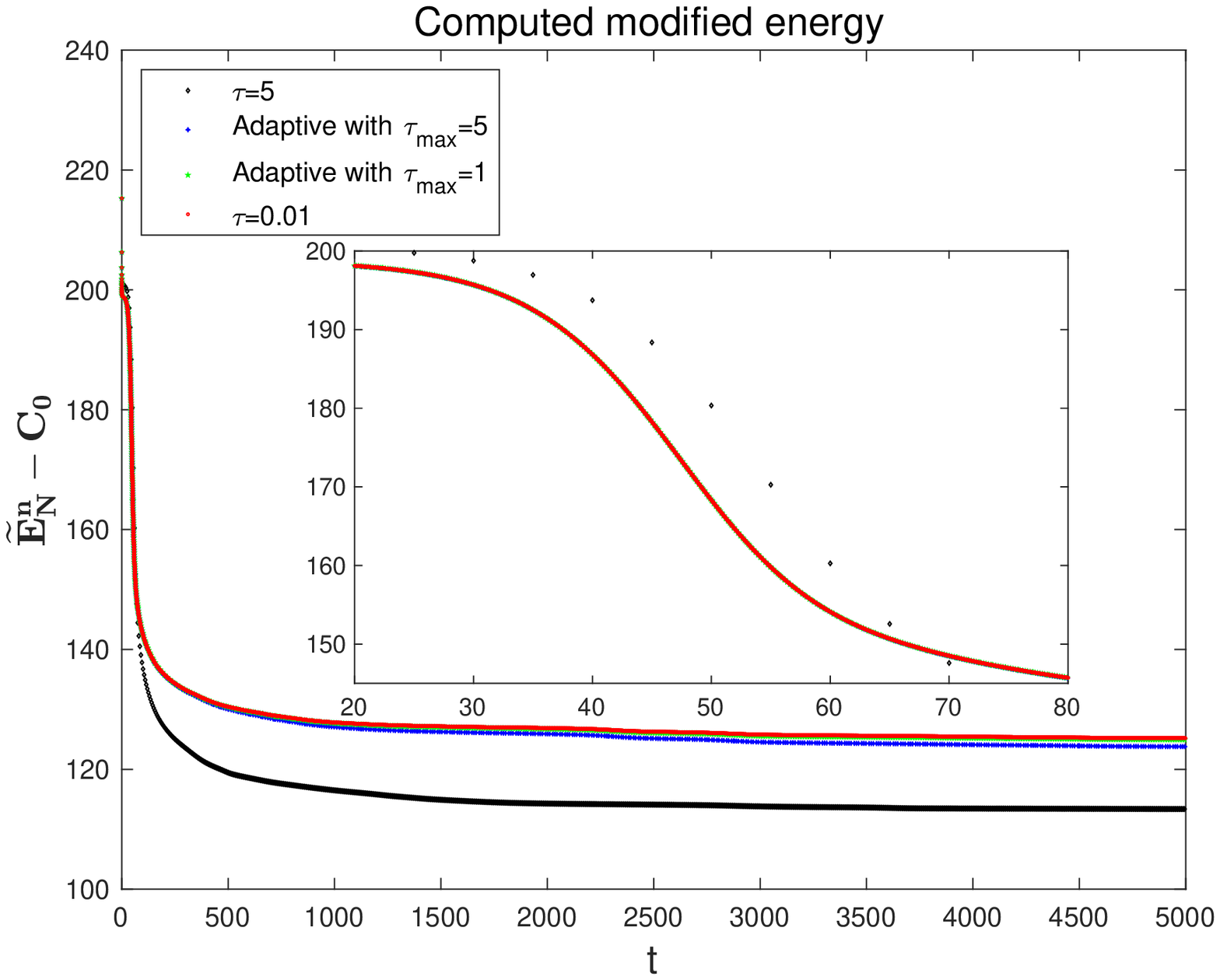}}
\centerline{(a) the computed modified energy $\widetilde{E}^{n}_{N}-C_{0}$}
\end{minipage}
\begin{minipage}[t]{0.49\linewidth}
\centerline{\includegraphics[scale=0.4]{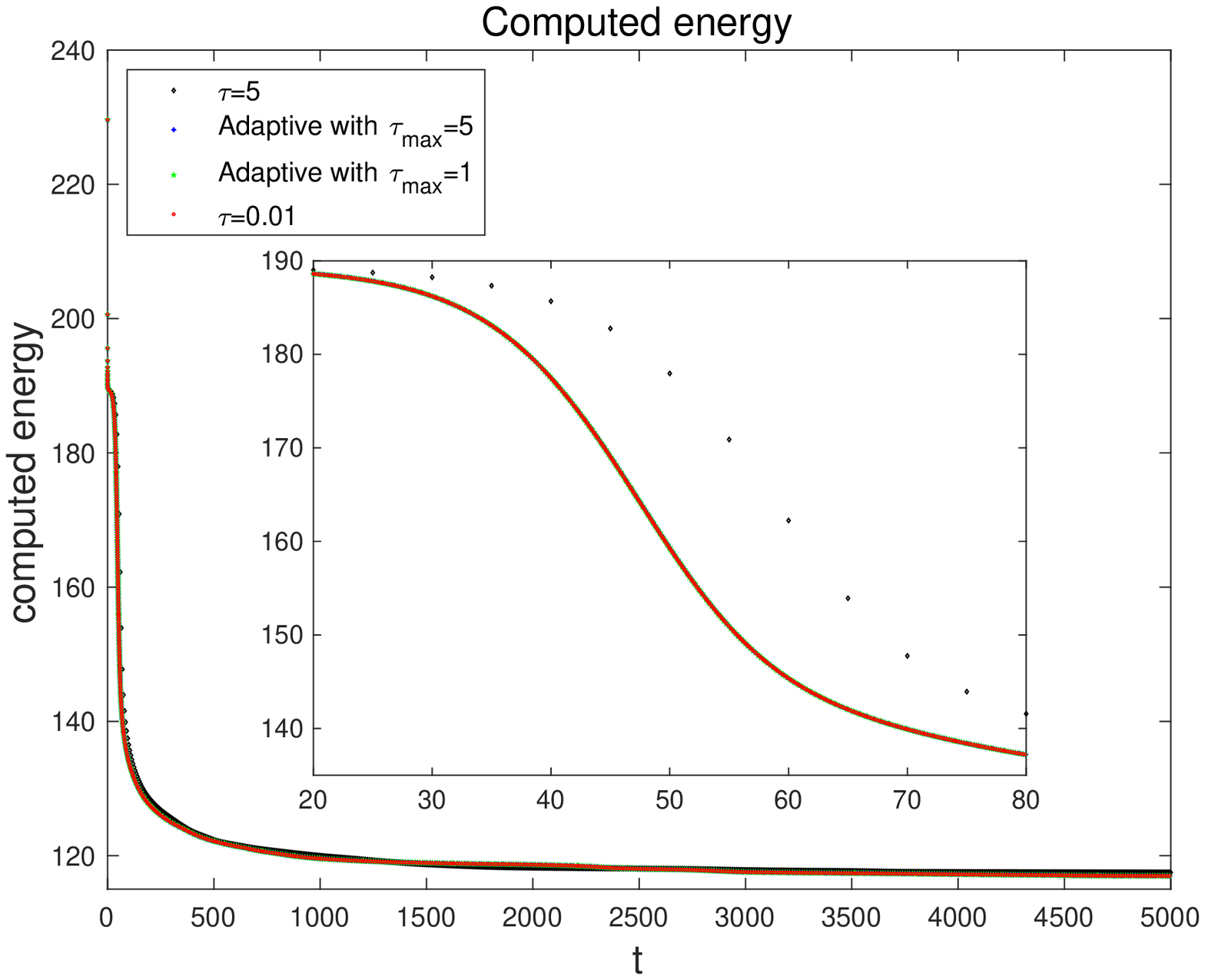}}
\centerline{(b) the computed energy $E(\phi^{n})$}
\end{minipage}
\begin{minipage}[t]{0.49\linewidth}
\centerline{\includegraphics[scale=0.4]{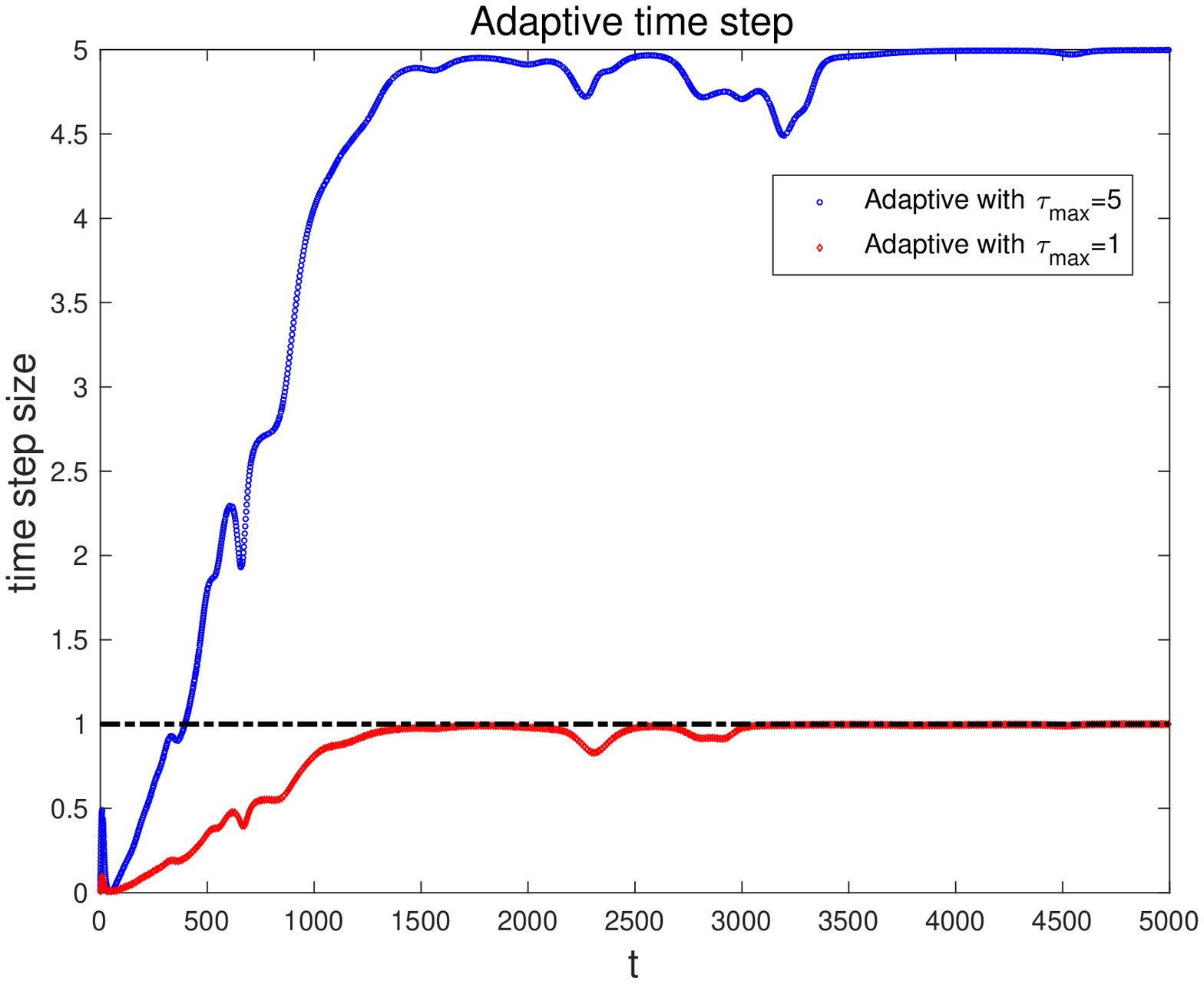}}
\centerline{(c) adaptive time step sizes with $\tau_{max}=1$ and $\tau_{max}=5$ }
\end{minipage}
\caption{Evolution in time of the modified energy, the original energy, and the adaptive time step sizes.
}\label{fig2_2}
\end{figure*}

\subsection{Crystal growth in a supercooled liquid}
In this subsection, we simulate the growth of a polycrystal in a supercooled liquid. Initially, three crystallites with different orientations are given by
\beq
\phi_{0}(x_{l},y_{l})=0.285+0.446\Big(\cos\Big(\frac{0.66}{\sqrt{3}}y_{l}\Big)\cos(0.66x_{l})-0.5\cos\Big(\frac{1.32}{\sqrt{3}}y_{l}\Big)\Big), ~~l=1,2,3,
\eeq
where $x_l$ and $y_{l}$ define a local system of cartesian coordinates that is oriented with the crystallite lattice. Similar numerical examples can be found in \cite{EKHG02,GN12,HWWL09,LS20,YH17}.
The computed domain is $(0,800)^{2}$, $\varepsilon=0.25$, and the initial three crystallites are located in the blocks with length of 40 in the computed domain, seeing Figure \ref{fig3_1} for the case of $t=0$.
In order to obtain three different orientations of the crystallites, we define the local coordinates $(x_{l},y_{l})$ by using the following affine transformation of the global coordinates $(x,y)$,
\bry
\begin{cases}
x_{l}=x\cos(\theta)-y\sin(\theta),\\
y_{l}=x\sin(\theta)+y\cos(\theta),
\end{cases}
\ery
where $\theta=-\pi/4, 0, \pi/4$ for $l=1,2,3$, respectively.
This produces a rotation given by the angle $\theta$.

The simulation is performed by the second order scheme \eqref{BDF2_2} with $\sigma=1$ and $1024\times1024$ Fourier modes.
Here, we use two types of temporal meshes: the fixed small time step with $\tau=0.01$ and the adaptive strategy \eqref{adp} with $\Dt_{min}=0.01, \Dt_{max}=1$ and $\gamma=10.$
In Figure \ref{fig3_1}, we display the evolution of the crystal growth up to $T=1500$ for these two types of temporal meshes, respectively.
It is observed that the snapshots of the crystal growth consist with each other at same time stages for these two temporal meshes.
The growth of the crystalline phase and the motion of well-defined crystal–liquid interfaces are observed.
Moreover, it shows that the different alignment of the crystallites causes defects and dislocations.
 This phenomenon has been also observed in \cite{EKHG02,GN12,HWWL09,LS20,YH17}.
In Figure \ref{fig3_2}, the evolution of {\color{black}the modified energy, the original energy,} and the corresponding adaptive time step sizes are plotted to validate the stability and the efficiency of our proposed scheme \eqref{BDF2_2} with the adaptive strategy \eqref{adp}.
{\color{black}\begin{remark}
As shown in Figure \ref{fig2_2} and Figure \ref{fig3_2}, the evolutions of the computed original energy with the tested time adaptive strategies are in agreement with the one computed by the small time step $\tau=0.01$ better than the case of the computed modified energy. This may be due to the computed auxiliary variable $r^{n}$ and the extra term $\frac{(2\sigma-1)\gamma_{n+1}^{3/2}}{2+2\gamma_{n+1}}\frac{\|\nabla^{-1}(\phi^{n}_{N}-\phi^{n-1}_{N})\|^{2}}{\tau_{n}}$ contained in the modified energy in \eqref{stab_ful2}.
\end{remark}
}

\begin{figure}[htbp]\label{fig3_1}
\begin{minipage}[t]{0.32\linewidth}
\centerline{\includegraphics[scale=0.3]{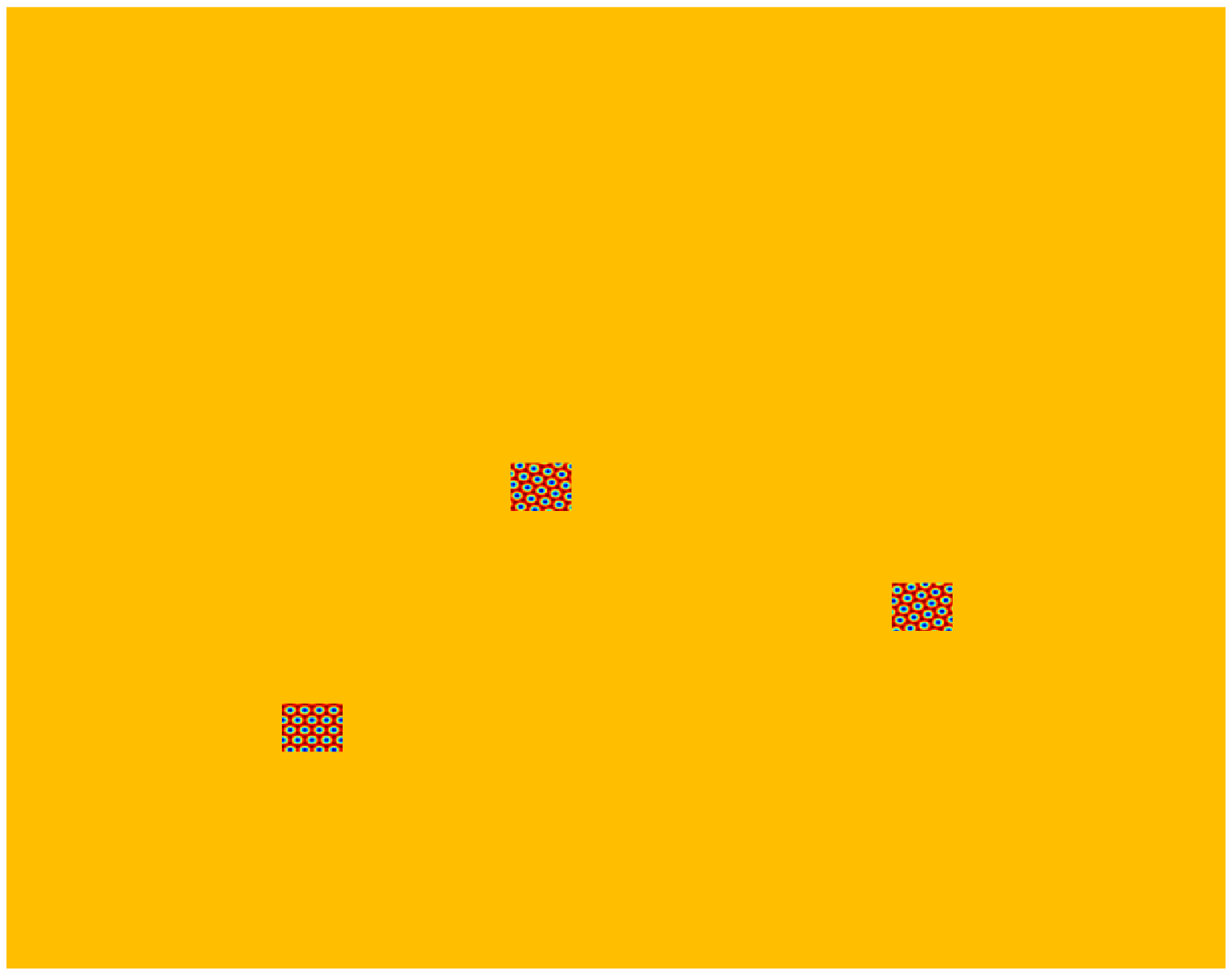}}
\centerline{}
\end{minipage}
\begin{minipage}[t]{0.32\linewidth}
\centerline{\includegraphics[scale=0.3]{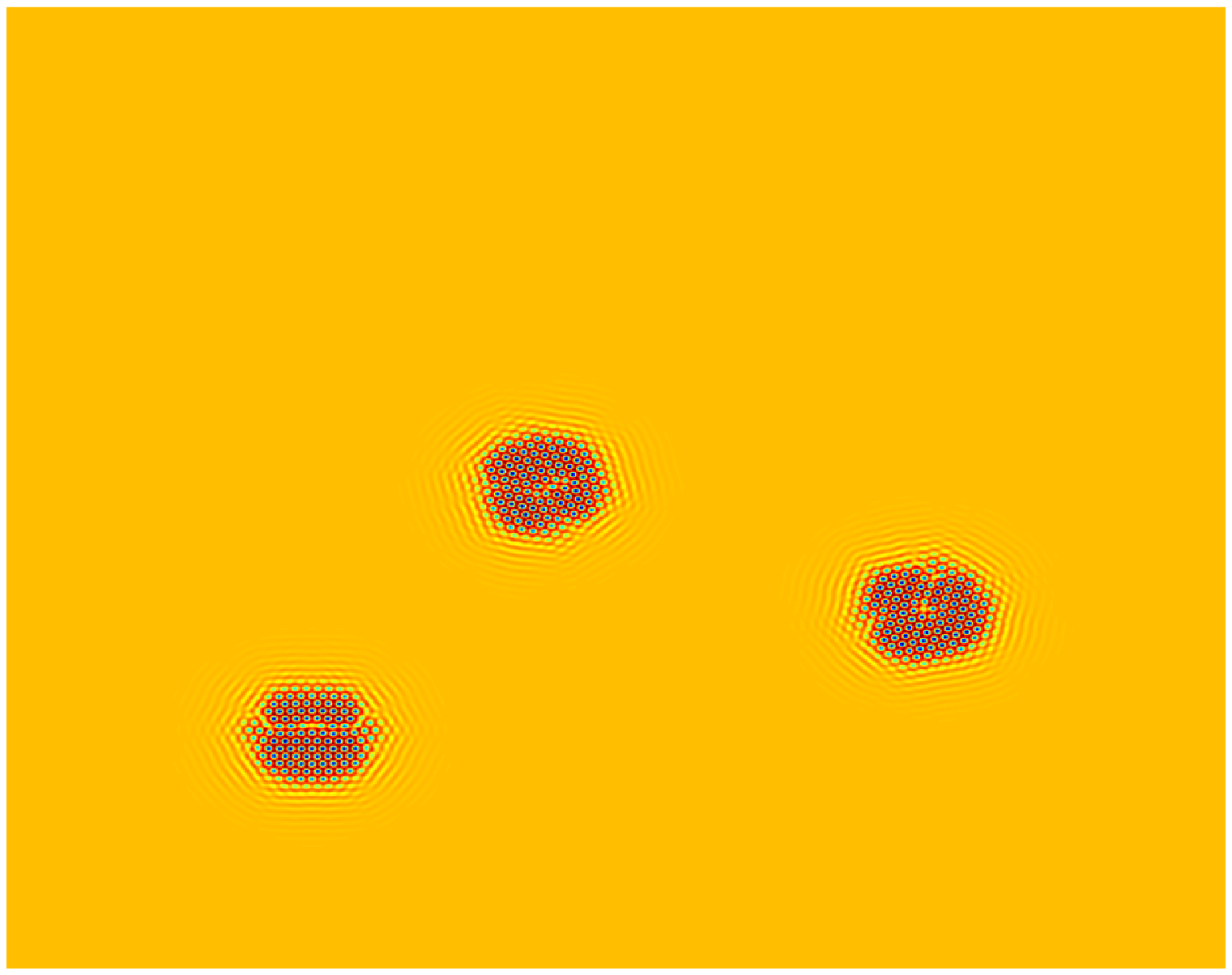}}
\centerline{}
\end{minipage}
\begin{minipage}[t]{0.32\linewidth}
\centerline{\includegraphics[scale=0.3]{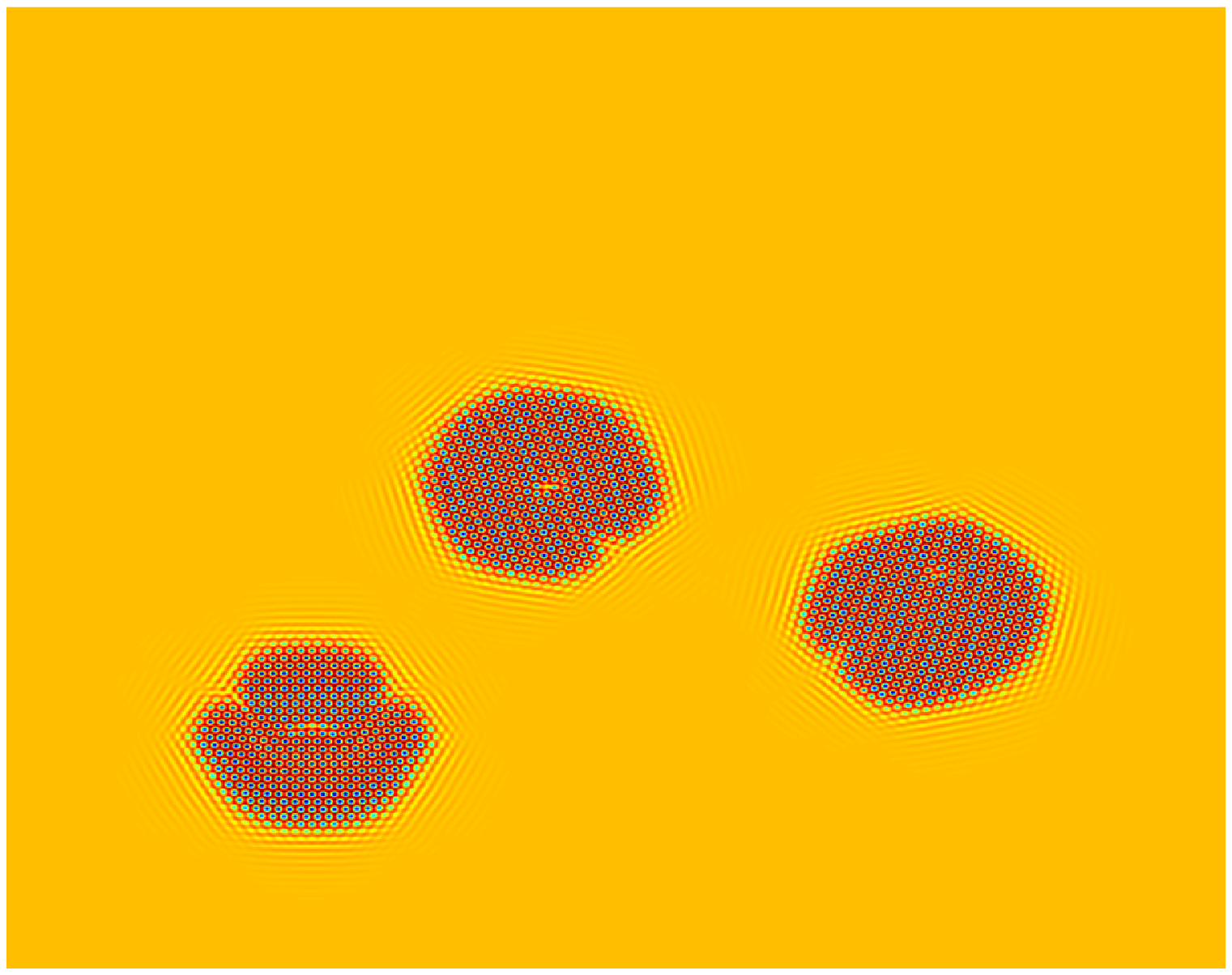}}
\centerline{}
\end{minipage}
\vskip -6mm
\begin{minipage}[t]{0.32\linewidth}
\centerline{\includegraphics[scale=0.3]{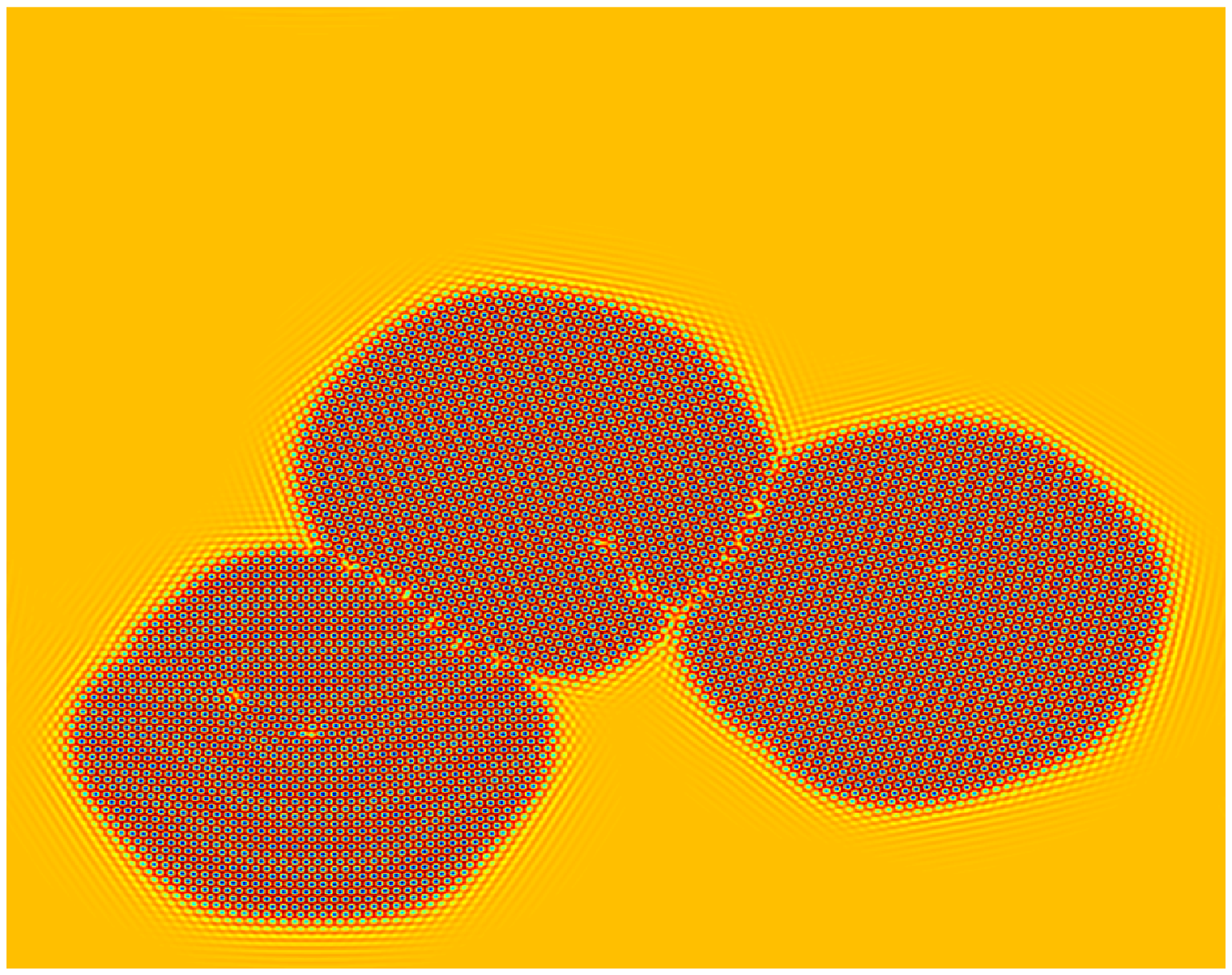}}
\centerline{}
\end{minipage}
\begin{minipage}[t]{0.32\linewidth}
\centerline{\includegraphics[scale=0.3]{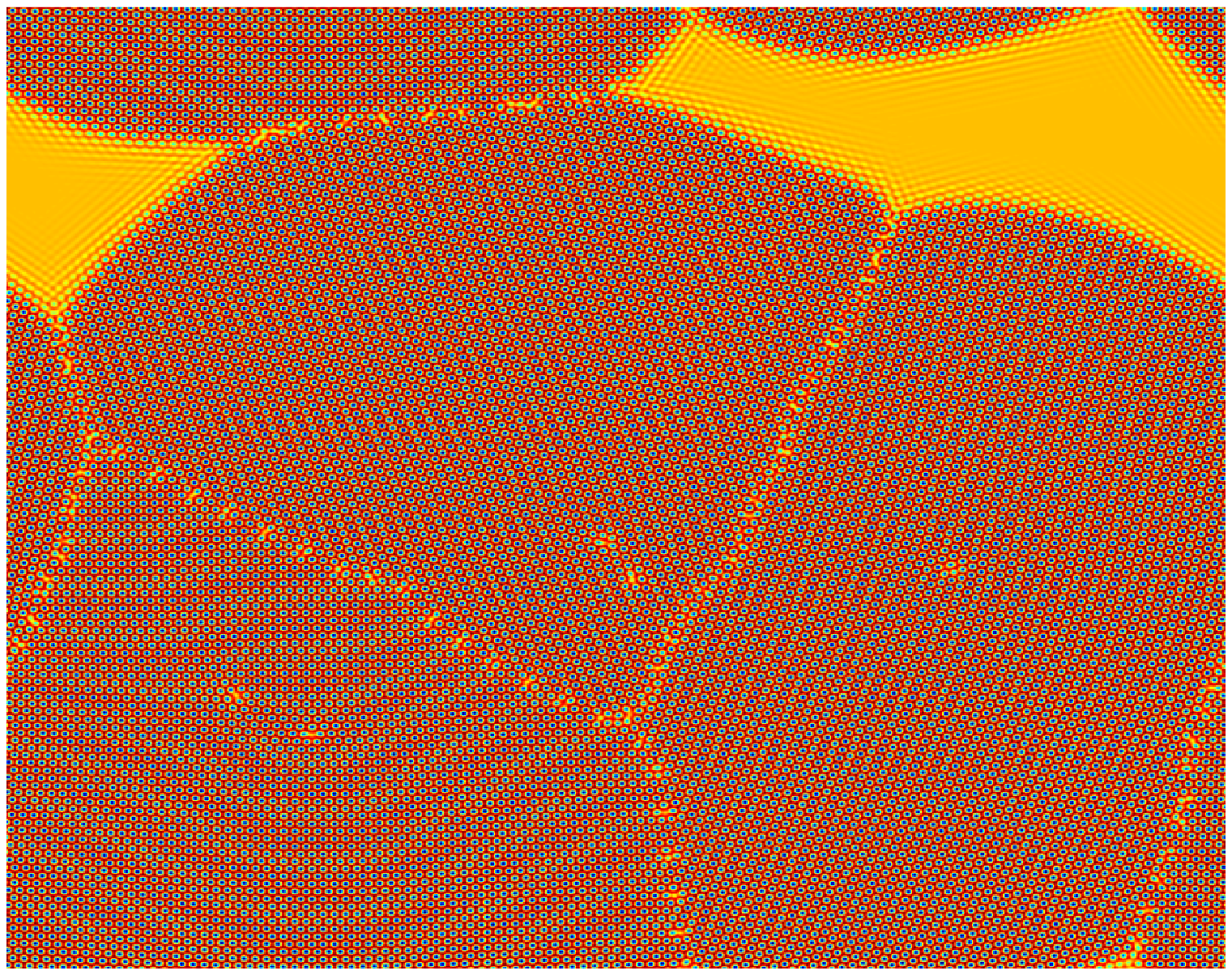}}
\centerline{(a)  Solutions with fixed time step size $\tau=0.01$ }
\end{minipage}
\begin{minipage}[t]{0.32\linewidth}
\centerline{\includegraphics[scale=0.3]{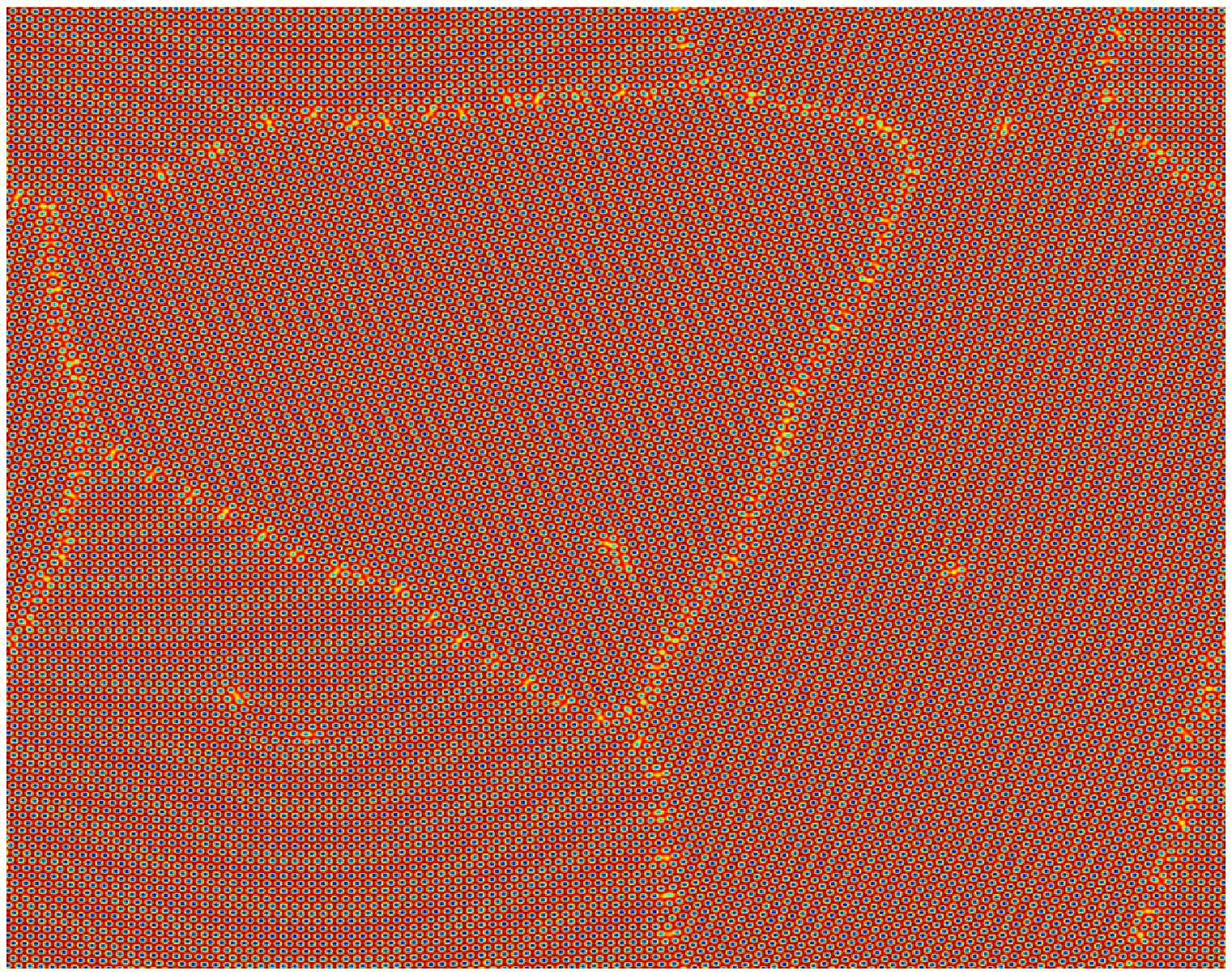}}
\centerline{}
\end{minipage}
\vskip 1mm
\begin{minipage}[t]{0.32\linewidth}
\centerline{\includegraphics[scale=0.3]{3_0.eps}}
\centerline{}
\end{minipage}
\begin{minipage}[t]{0.32\linewidth}
\centerline{\includegraphics[scale=0.3]{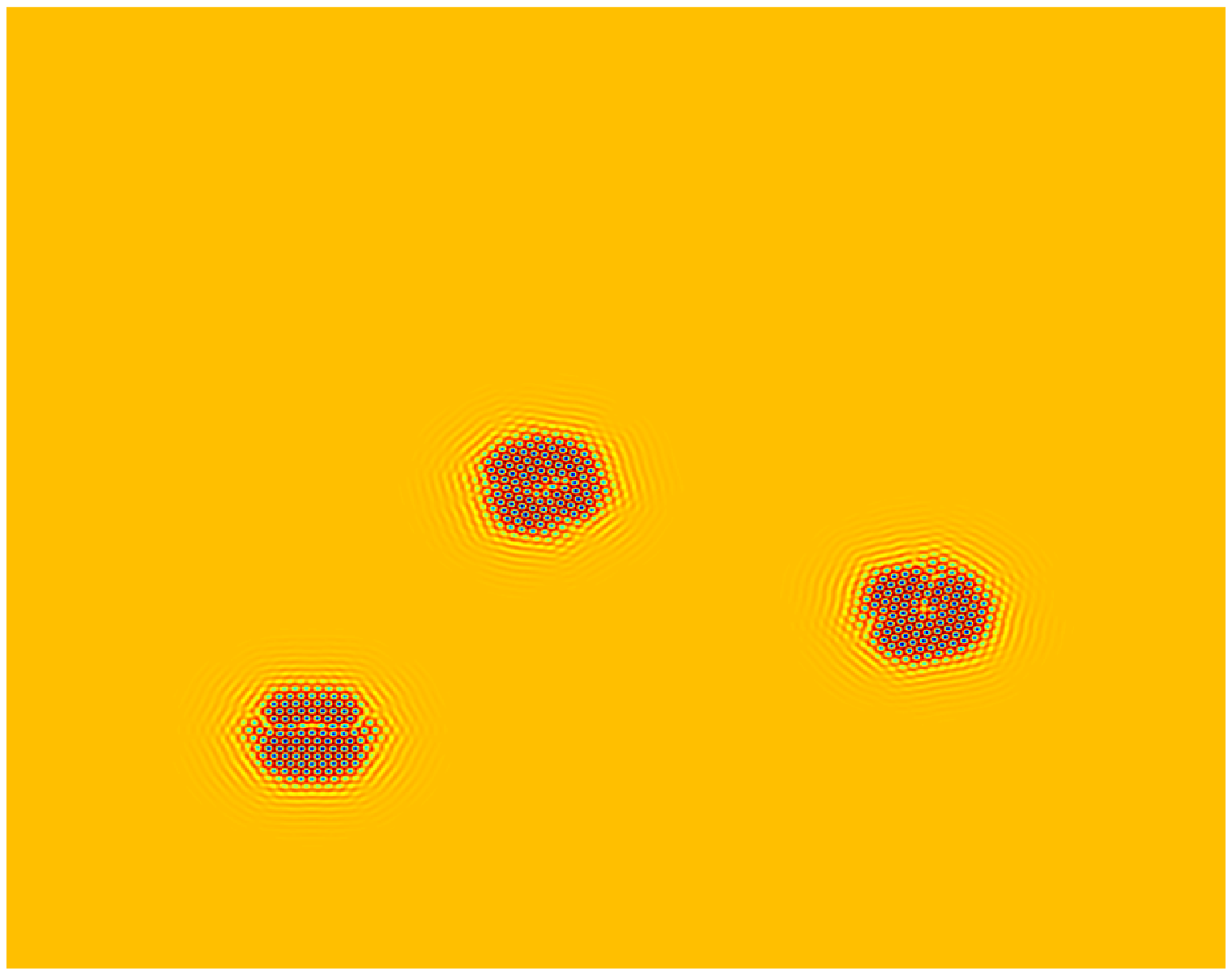}}
\centerline{}
\end{minipage}
\begin{minipage}[t]{0.32\linewidth}
\centerline{\includegraphics[scale=0.3]{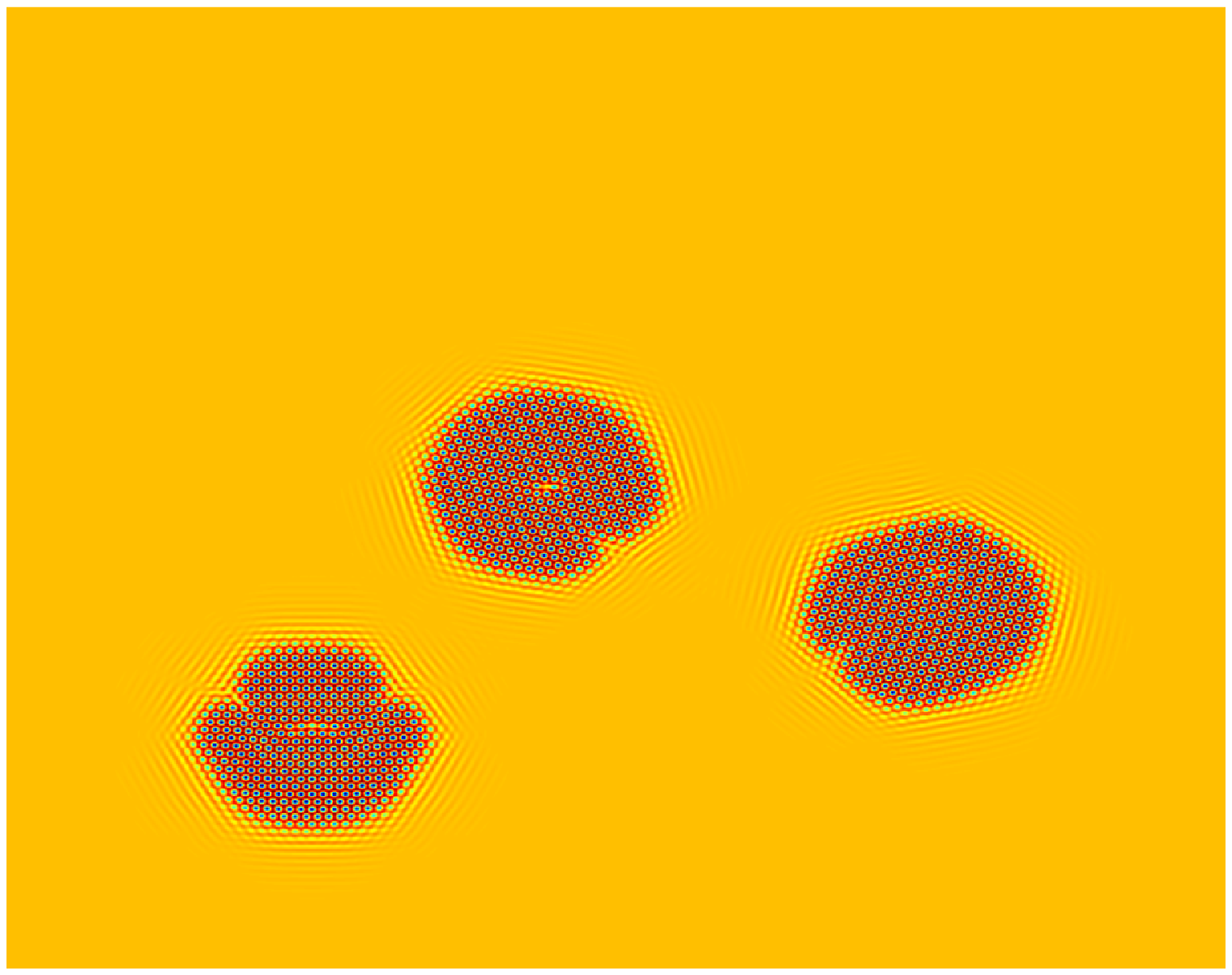}}
\centerline{}
\end{minipage}
\vskip -6mm
\begin{minipage}[t]{0.32\linewidth}
\centerline{\includegraphics[scale=0.3]{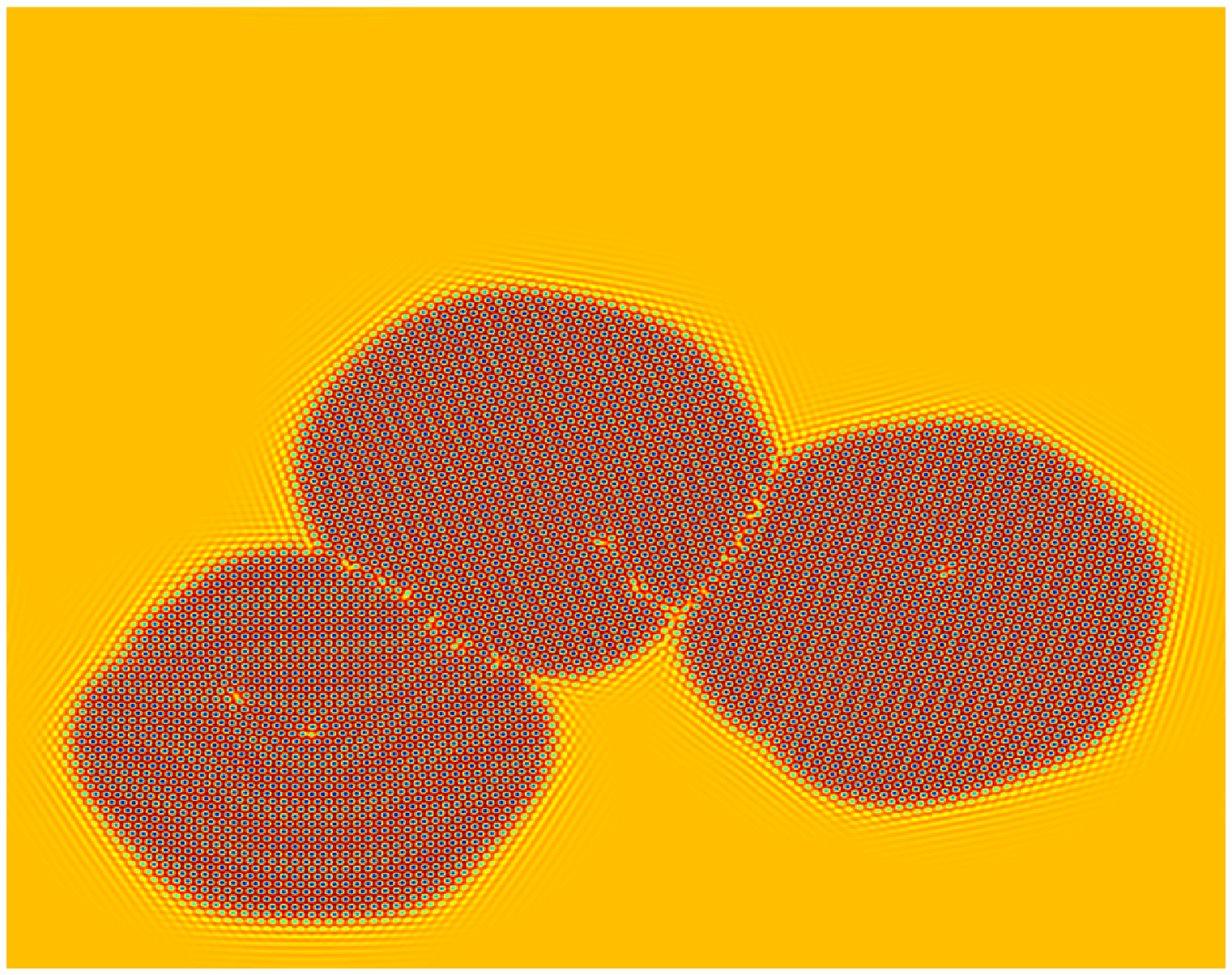}}
\centerline{}
\end{minipage}
\begin{minipage}[t]{0.32\linewidth}
\centerline{\includegraphics[scale=0.3]{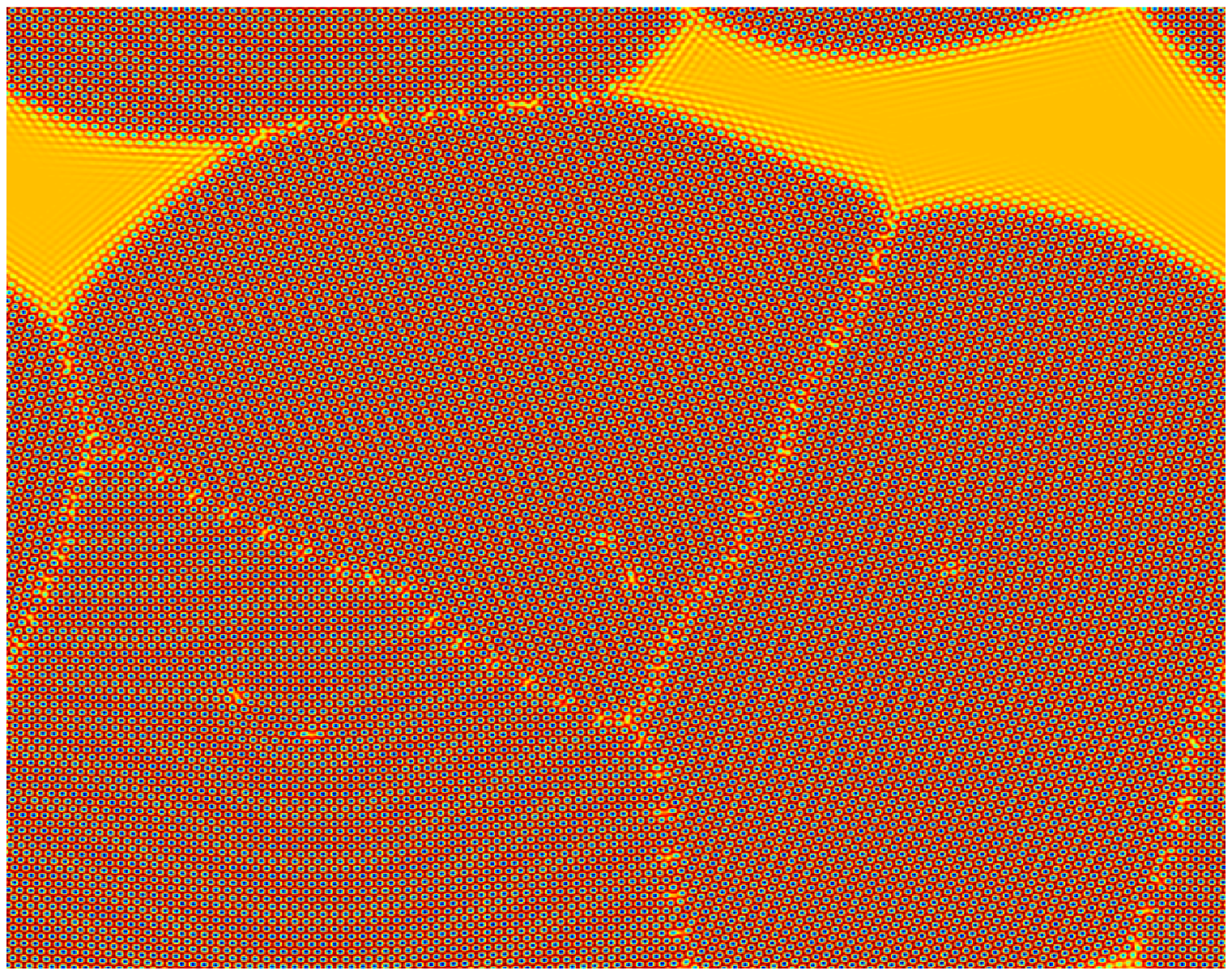}}
\centerline{(b) Solutions with adaptive time steps with $\tau_{min}=0.01, \tau_{max}=1$ and $\gamma=10.$}
\end{minipage}
\begin{minipage}[t]{0.32\linewidth}
\centerline{\includegraphics[scale=0.3]{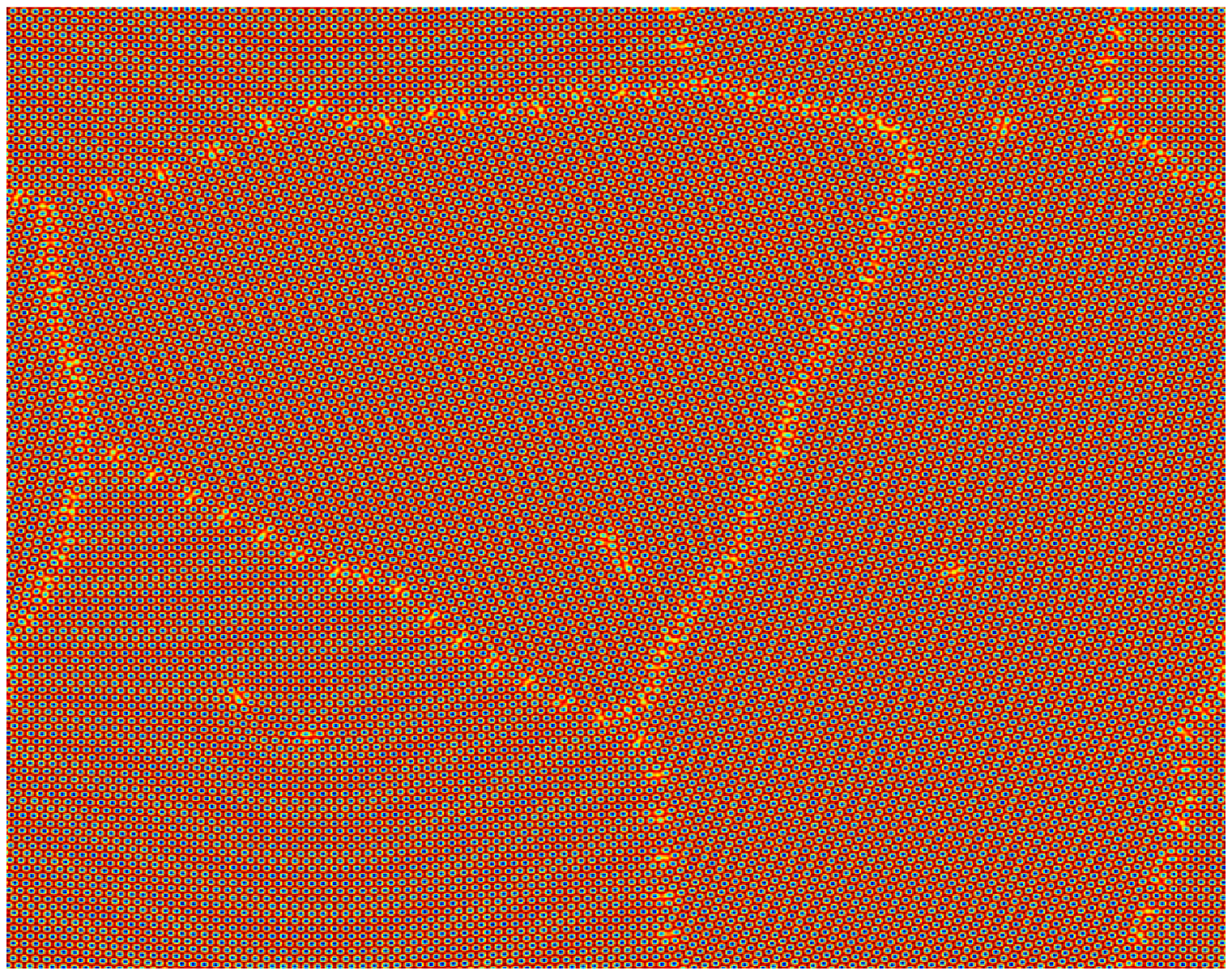}}
\centerline{}
\end{minipage}
\caption{Solution snapshots of crystal growth for the PFC equation at around $t=0, 100, 200, 400, 800$ and $1500$, respectively.
}\label{fig3_1}
\end{figure}

\begin{figure*}[htbp]
\begin{minipage}[t]{0.49\linewidth}
\centerline{\includegraphics[scale=0.4]{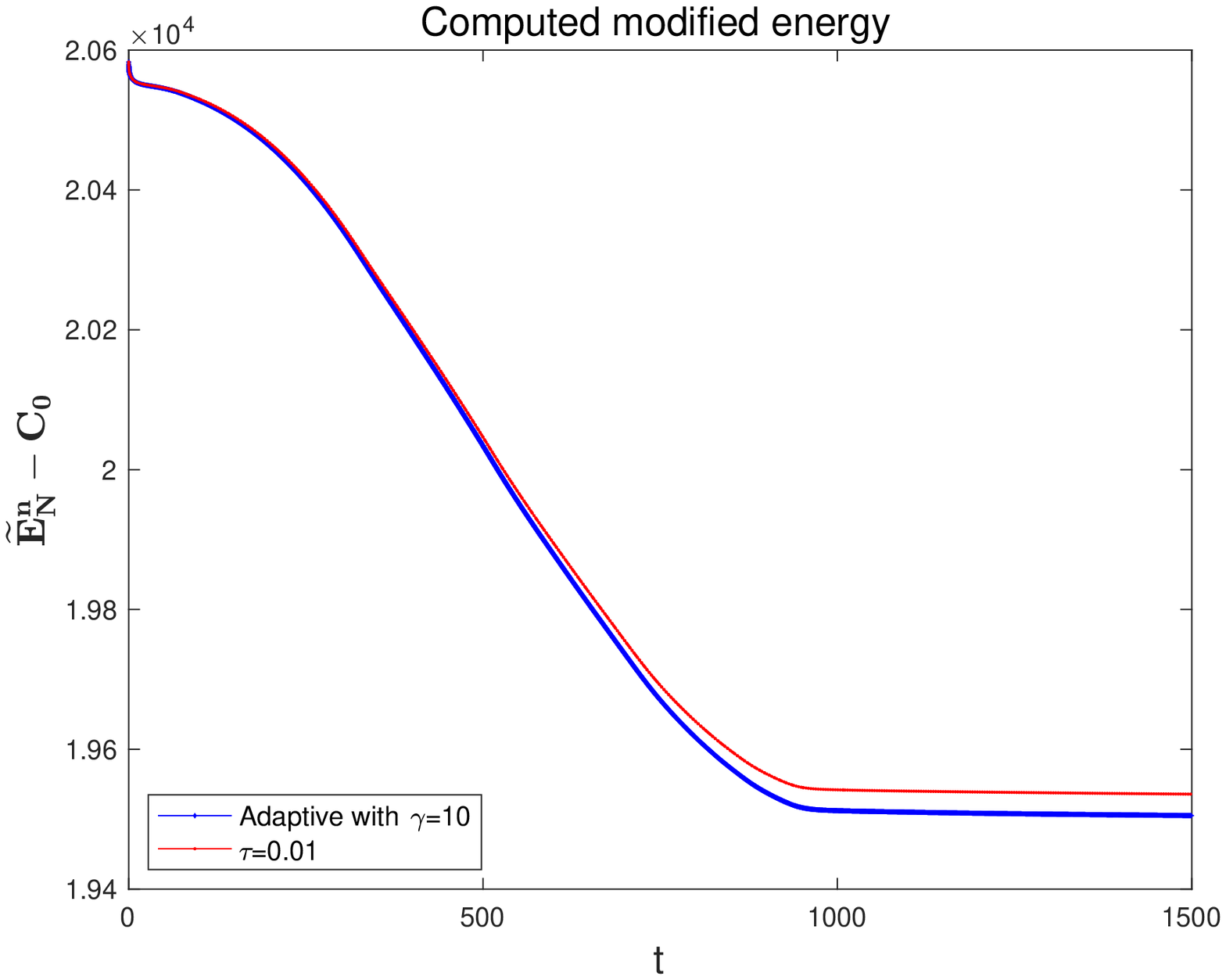}}
\centerline{(a) the computed modified energy $\widetilde{E}^{n}_{N}-C_{0}$}
\end{minipage}
\begin{minipage}[t]{0.49\linewidth}
\centerline{\includegraphics[scale=0.4]{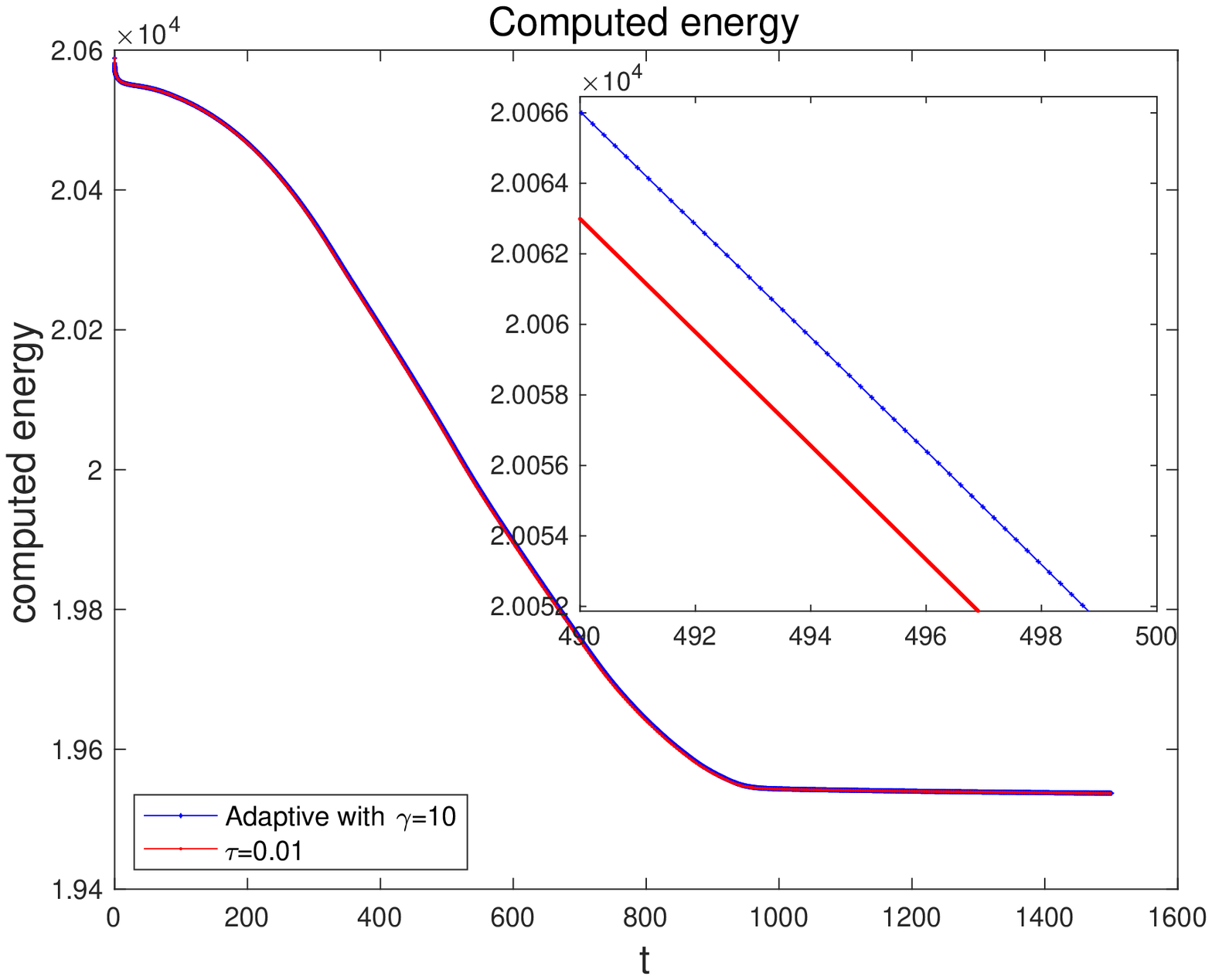}}
\centerline{(b) the computed energy $E(\phi^{n})$}
\end{minipage}
\begin{minipage}[t]{0.49\linewidth}
\centerline{\includegraphics[scale=0.4]{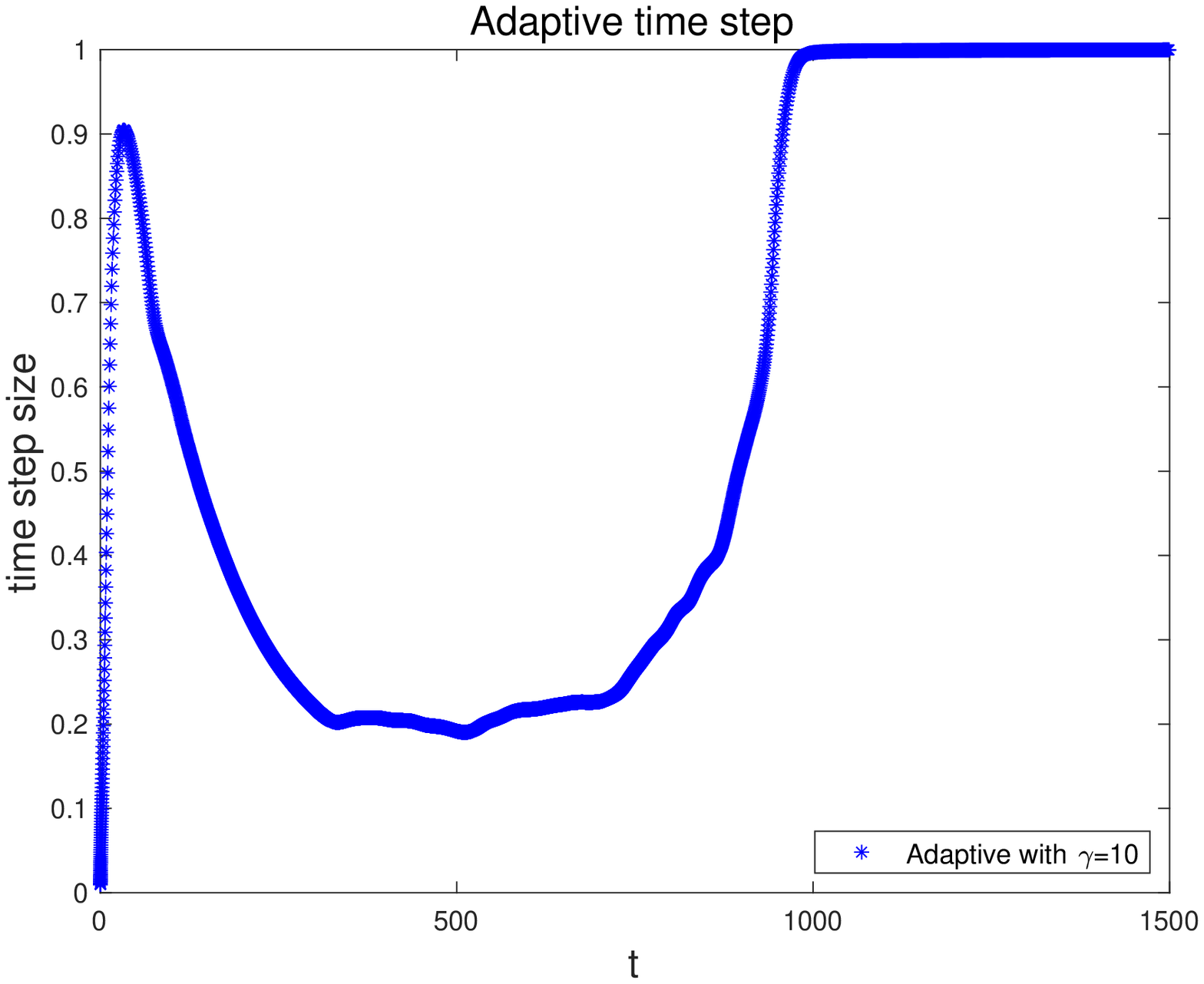}}
\centerline{(c) adaptive time step sizes with $\gamma=10$}
\end{minipage}
\caption{Evolution in time of the energy and the adaptive time step size for crystal growth.
}\label{fig3_2}
\end{figure*}

\section {Concluding remarks}\label{sec:conclusions}
We have developed and analyzed a linear second order nonuniform BDF scheme based on the SAV approach for the PFC model. The  energy stability of the proposed scheme is established for the nonuniform temporal mesh with a mild restriction on the adjacent time step radio $\gamma_{n}\leq4.8645.$
Although we only use a first order method to approximate the auxiliary variable $r(t)$, the second order accuracy of the phase function $\phi^{n}$ can be guaranteed by setting a large enough positive constant $C_{0}$ such that $C_{0}\geq 1/\Dt$. Moreover, a rigorous error estimate of our proposed fully discrete nonuniform BDF2 scheme is established with some proper assumptions on the regularity of the solution.
Finally, a series of  numerical tests have been carried out to validate the theoretical results and the efficiency of the proposed scheme combined with the time adaptive strategy.
\bibliographystyle{plain}
\bibliography{ref}
\end{document}